\g@addto@macro\normalsize{%
  \setlength\abovedisplayskip{7pt}
  \setlength\belowdisplayskip{7pt}
  \setlength\abovedisplayshortskip{7pt}
  \setlength\belowdisplayshortskip{7pt}
}
\setlist{nolistsep}
\newtheoremstyle{plain}{3mm}{3mm}{\slshape}{}{\bfseries}{.}{.5em}{}
\newtheoremstyle{definition}{2mm}{2mm}{}{}{\bfseries}{.}{.5em}{}
\newtheoremstyle{inproofclaim}{2mm}{2mm}{}{}{\bfseries}{.}{.5em}{}
\theoremstyle{plain}
\newtheorem{Theorem}{Theorem}
\newtheorem{Lemma}[Theorem]{Lemma}
\newtheorem{lemma}[Theorem]{Lemma}
\newtheorem{Proposition}[Theorem]{Proposition}
\newtheorem{Corollary}[Theorem]{Corollary}
\newtheorem{Question}[Theorem]{Question}
\newtheorem{Conjecture}[Theorem]{Conjecture}	
\theoremstyle{definition}
\newtheorem{Definition}[Theorem]{Definition}
\newtheorem{Remark}[Theorem]{Remark}
\newtheorem{Example}[Theorem]{Example}
\theoremstyle{inproofclaim}
\theoremstyle{plain}
\newcounter{MainTheoremCounter}
\newtheorem{Maintheorem}[MainTheoremCounter]{Theorem}
\theoremstyle{plain}
\newtheorem*{namedthm}{\namedthmname}
\newcounter{namedthm}
	\newenvironment{named}[2]
	{\def\namedthmname{#1}
	\refstepcounter{namedthm}
	\namedthm[#2]\def\@currentlabel{#1}}
	{\endnamedthm}
\numberwithin{equation}{section}
\definecolor{Color1}{rgb}{0.0, 0.42, 0.47}
\definecolor{Scarlet}{rgb}{0.78, 0.11, 0.0}
\definecolor{Color3}{rgb}{0.39, 0.71 ,0.0}
\titlespacing*{\section}{0pt}{3.5ex plus 0ex minus 0ex}{1.5ex plus 0ex}
\titlespacing*{\subsection}{0pt}{3.5ex plus 0ex minus 0ex}{1.5ex plus 0ex}
\titlespacing*{\subsubsection}{0pt}{3.5ex plus 0ex minus 0ex}{1.5ex plus 0ex}
\newcommand{\Cesaro}{Ces\`{a}ro}
\newcommand{\Szemeredi}{Szemer\'{e}di}
\newcommand{\supp}{{\normalfont\text{supp}}\,}
\newcommand{\Oh}{{\rm O}}
\newcommand{\oh}{{\rm o}}
\newcommand{\N}{\mathbb{N}}
\newcommand{\Z}{\mathbb{Z}}
\newcommand{\R}{\mathbb{R}}
\newcommand{\Q}{\mathbb{Q}}
\newcommand{\T}{\mathbb{T}}
\newcommand{\Hilb}{\mathscr{H}}
\newcommand{\Cont}{C}
\newcommand{\define}[1]{{\itshape #1}}
\newcommand{\lhk}{\lvert\!|\!|}
\newcommand{\rhk}{|\!|\!\rvert}
\renewcommand{\epsilon}{\varepsilon}
\renewcommand{\leq}{\leqslant}
\renewcommand{\geq}{\geqslant}
\renewcommand{\setminus}{\backslash}
\renewcommand{\Re}{{\rm Re}}
\renewcommand{\phi}{\varphi}
\newcommand{\E}{\mathbb{E}}
\newcommand{\B}{\mathcal{B}}
\renewcommand{\d}{~\mathrm{d}}
\newcommand{\Hardy}{\mathcal{H}}
\newcommand{\F}{\mathcal{F}}
\newcommand{\dom}{\mathsf{dom}}
\newcommand{\upperdens}{\overline{\mathrm{d}}}
\renewcommand{\sp}{\mathrm{span}^*}
\newcommand{\sps}{\mathrm{span}}
\newcommand{\Hsp}{\nabla\text{-}\mathrm{span}}
\newcommand{\poly}{\mathrm{poly}}
\author{Vitaly~Bergelson \and Joel~Moreira \and Florian~K.~Richter\footnote{The third author is supported by the National Science Foundation under grant number DMS~1901453.}}
\date{\small \today}
\title{\bfseries Multiple ergodic averages along functions from a Hardy field: convergence, recurrence and combinatorial applications}
\begin{document}

\maketitle
\begin{abstract}
We obtain new results pertaining to convergence and recurrence of multiple ergodic averages along functions from a Hardy field. Among other things, we confirm some of the conjectures posed by Frantzikinakis in \cite{Frantzikinakis10,Frantzikinakis16} and obtain combinatorial applications which contain, as rather special cases, several previously known (polynomial and non-polynomial) extensions of \Szemeredi{}'s theorem on arithmetic progressions \cite{BL96,BLL08,FW09,Frantzikinakis10,BMR20}.
One of the novel features of our results, which is not present in previous work, is that they allow for a mixture of polynomials and non-polynomial functions.
As an illustration, assume $f_i(t)=a_{i,1}t^{c_{i,1}}+\cdots+a_{i,d}t^{c_{i,d}}$ for $c_{i,j}>0$ and $a_{i,j}\in\R$.
Then
\begin{itemize}
 \item for any measure preserving system $(X,\B,\mu,T)$ and $h_1,\dots,h_k\in L^\infty(X)$, the limit
$$\lim_{N\to\infty}\frac{1}{N}\sum_{n=1}^N T^{[f_1(n)]}h_1\cdots T^{[f_k(n)]}h_k$$
exists in $L^2$;
    \item for any $E\subset \N$ with $\upperdens(E)>0$ there are $a,n\in\N$ such that $\{a,\, a+[f_1(n)],\ldots,a+[f_k(n)]\}\subset E$.
\end{itemize}
We also show that if $f_1,\dots,f_k$ belong to a Hardy field, have polynomial growth, and are such that no linear combination of them is a polynomial, then for any measure preserving system $(X,\B,\mu,T)$ and any $A\in\B$,
$$\limsup_{N\to\infty}\frac{1}{N}\sum_{n=1}^N\mu\Big(A\cap T^{-[ f_1(n) ]}A\cap\ldots\cap T^{-[f_k(n)]}A\Big)\,\geq\,\mu(A)^{k+1}.$$

\end{abstract}

\small
\tableofcontents
\thispagestyle{empty}
\normalsize


\section{Introduction}
\label{sec_intro}


The goal of this paper is to establish a strong multiple recurrence theorem which has results obtained in \cite{BL96,BLL08,FW09,Frantzikinakis10,BMR20} as special cases and produces new applications to combinatorics.
In particular, it provides a solution to an open problem posed by Frantzikinakis \cite[Problem 25]{Frantzikinakis16}, and allows us to obtain partial progress on another \cite[Problem 23]{Frantzikinakis16}.

\subsection{Combinatorial results}
\label{sec_combi_results}

The \define{upper density} of a set $E\subset \N$ is defined as
$
\upperdens(E)\,=\,\limsup_{N\to\infty}{|E\cap \{1,\ldots,N\}|}/{N}.
$
One of the central themes in Ramsey theory is the study of arithmetic patterns that appear in large sets of natural numbers. 
In particular, one would like to know for which sequences $g_1,\ldots,g_k\colon \N\to \N$ one can always find a configuration of the form
\begin{equation}
\label{eqn_gen_patterns}
\{a,\, a+g_1(n), \ldots, a+ g_k(n)\}
\end{equation}
in any set $E\subset \N$ of positive upper density.
A fundamental result in this direction is the following theorem of E.\ \Szemeredi{}.

\begin{Theorem}[\Szemeredi{}'s Theorem, \cite{Szemeredi75}]{}
\label{thm_szmeredi}
For any set $E\subset \N$ of positive upper density and any $k\in\N$ there exist $a,n\in\N$ such that $\{a,\,a+n,\ldots,a+kn\}\subset E$.
\end{Theorem}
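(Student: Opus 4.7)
The plan is to follow Furstenberg's ergodic-theoretic approach, which reduces Szemer\'edi's theorem to a statement about multiple recurrence in measure preserving systems. The first step is to invoke the Furstenberg correspondence principle: given a set $E\subset \N$ with $\upperdens(E)=\delta>0$, one constructs a measure preserving system $(X,\B,\mu,T)$ and a measurable set $A\in\B$ with $\mu(A)=\delta$ such that
$$\upperdens\bigl(E\cap (E-n_1)\cap\cdots\cap (E-n_k)\bigr)\,\geq\,\mu\bigl(A\cap T^{-n_1}A\cap\cdots\cap T^{-n_k}A\bigr)$$
for every $n_1,\ldots,n_k\in\N$. Concretely, one realises $E$ as a point in the Bernoulli shift $\{0,1\}^{\Z}$ and takes $\mu$ to be a weak-$*$ limit of empirical measures along a sequence of intervals $[1,N_j]$ realising the upper density; the set $A$ is the cylinder $\{x:x(0)=1\}$.

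With the correspondence principle in hand, it suffices to prove Furstenberg's multiple recurrence theorem: for any measure preserving system $(X,\B,\mu,T)$, any $A\in\B$ with $\mu(A)>0$, and any $k\in\N$ there exists $n\in\N$ with
$$\mu\bigl(A\cap T^{-n}A\cap T^{-2n}A\cap\cdots\cap T^{-kn}A\bigr)\,>\,0.$$
Combining this with the previous inequality yields $n\in\N$ and a set of $a$'s of positive upper density for which $\{a,a+n,\ldots,a+kn\}\subset E$, which is exactly the conclusion of \Cref{thm_szmeredi}.

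The main obstacle, therefore, is the multiple recurrence theorem itself. The plan is to first reduce to the ergodic case via the ergodic decomposition, and then prove the statement by an induction on a structural tower for $(X,\B,\mu,T)$. In Furstenberg's original scheme, one expresses an arbitrary ergodic system as a (possibly transfinite) inverse limit of a tower whose successive extensions are either \emph{weakly mixing} or \emph{compact}. The base case is the trivial one-point system; compact (Kronecker) systems are handled by a pigeonhole argument exploiting almost periodicity; and one then shows that the SZ-property (positivity of the multiple recurrence expression for each $k$) lifts through both weakly mixing extensions (via an $\Ltwo$ van der Corput argument) and compact extensions (via a Sz.-Nagy–Rokhlin type fibrewise almost-periodicity argument). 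A more modern variant, closer to the spirit of the present paper, is to identify the Host--Kra nilfactors as characteristic factors for the averages $\frac{1}{N}\sum_{n=1}^N T^n f_1\cdots T^{kn}f_k$, thereby reducing the problem to a question about equidistribution of polynomial orbits on nilmanifolds, where recurrence follows from a direct analysis on nilsystems. In either implementation, the delicate point is the propagation of the SZ-property through compact extensions, which requires a careful uniform estimate on fibres.
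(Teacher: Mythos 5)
Your outline is correct and is essentially the route the paper itself takes: the paper does not reprove \cref{thm_szmeredi} (it is quoted from Szemer\'edi's original work) but recovers it as a special case of \cref{thm_main_combinatorial} via exactly the machinery you describe, namely Furstenberg's correspondence principle (\cref{thm_correspondence}) combined with a multiple recurrence statement proved through Host--Kra characteristic factors and a reduction to nilsystems. The only caveat is that your argument is a plan rather than a proof --- the multiple recurrence theorem (whether via the weakly mixing/compact extension tower or via nilfactors) is invoked, not established --- but as a description of the standard ergodic-theoretic derivation it is accurate and consistent with how the result sits inside this paper.
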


An extension of \Szemeredi{}'s Theorem dealing with the case of \eqref{eqn_gen_patterns} where $g_1,\ldots,g_k$ are polynomials was obtained in \cite{BL96}. The following theorem pertains to the one-dimensional case of this result.

\begin{Theorem}[Polynomial \Szemeredi{} Theorem, \cite{BL96}]{}
\label{thm_poly_szmeredi}
For any set $E\subset \N$ of positive upper density and any polynomials $q_1,\ldots,q_k\in\Z[t]$ satisfying $q_1(0)=\ldots=q_k(0)=0$ there exist $a,n\in\N$ such that $\{a,\, a+q_1(n),\ldots,a+q_k(n)\}\subset E$.
\end{Theorem}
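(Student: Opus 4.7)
The plan is to translate the combinatorial statement into an ergodic multiple recurrence theorem via the Furstenberg correspondence principle, and then establish that recurrence theorem by a PET induction combined with an analysis on a suitable characteristic factor. For the correspondence step, given $E \subset \N$ with $\upperdens(E) = \delta > 0$, one constructs an invertible probability measure preserving system $(X, \B, \mu, T)$ and a measurable set $A \in \B$ with $\mu(A) = \delta$ such that for every $m_1, \ldots, m_k \in \Z$,
$$\upperdens\bigl(E \cap (E - m_1) \cap \cdots \cap (E - m_k)\bigr) \,\geq\, \mu\bigl(A \cap T^{-m_1} A \cap \cdots \cap T^{-m_k} A\bigr);$$
the standard construction takes $X = \{0,1\}^{\Z}$, $T$ the shift, $A = \{x : x(0) = 1\}$, and $\mu$ a weak-$*$ limit of shift averages of the point mass at $\mathbf{1}_E$. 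Hence it suffices to prove that, for any such system and any $A$ with $\mu(A) > 0$,
$$\liminf_{N \to \infty} \frac{1}{N} \sum_{n=1}^{N} \mu\bigl(A \cap T^{-q_1(n)} A \cap \cdots \cap T^{-q_k(n)} A\bigr) \,>\, 0,$$
since positivity of this liminf produces some $n$ making the summand positive, and that $n$ supplies, via the correspondence, an $a \in \N$ with $\{a, a+q_1(n), \ldots, a+q_k(n)\} \subset E$.

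I would then attack the ergodic statement by Bergelson's PET (polynomial exhaustion technique) induction. One assigns to each tuple $(q_1, \ldots, q_k)$ a well-ordered complexity --- essentially a vector recording degrees with multiplicities --- and applies the van der Corput lemma to the $L^2$-norm of
$$\frac{1}{N} \sum_{n=1}^{N} T^{q_1(n)} f_1 \cdot T^{q_2(n)} f_2 \cdots T^{q_k(n)} f_k,$$
producing new averages involving the difference polynomials $q_i(n+h) - q_j(n)$ whose tuple has strictly smaller complexity. Iterating, one reduces either to a linear polynomial family --- handled by Furstenberg's original multiple recurrence theorem --- or to an explicit product expression on a weakly mixing factor. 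Simultaneously, this scheme isolates a characteristic factor (an inverse limit of nilsystems in the sense of Host-Kra-Ziegler, or equivalently the top of an iterative tower of compact extensions of the Kronecker factor built from Furstenberg's structure theorem, as in the original Bergelson-Leibman proof) onto which one may project $\mathbf{1}_A$ without changing the multicorrelation, reducing the problem to the analysis of polynomial orbits on a nilsystem.

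The main obstacle lies in the PET bookkeeping together with the base case. The complexity measure must be engineered so that it strictly decreases under every van der Corput step, even when mixed degrees, cancellation of leading terms after differencing, and the appearance of polynomials in two variables $(n,h)$ are all present; getting this right is the technical heart of the argument. For the base case on the characteristic factor, one invokes Weyl--Leibman equidistribution of polynomial orbits on nilmanifolds and extracts a quantitative lower bound (of order $\mu(A)^{k+1}$) via a Cauchy--Schwarz and diagonal argument. The hypothesis $q_i(0) = 0$ is crucial here, since it forces each polynomial orbit to pass through $A$ at time $n = 0$, providing the positivity that would fail for polynomials with arbitrary constant term.
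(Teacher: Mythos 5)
Your overall route (Furstenberg correspondence followed by an ergodic polynomial multiple recurrence theorem proved by PET induction and a characteristic-factor analysis) is the classical Bergelson--Leibman strategy; note that the paper itself does not reprove \cref{thm_poly_szmeredi} this way, but quotes it from \cite{BL96} and rederives it as a special case of \cref{thm_main_combinatorial} (via Corollary A1), whose proof passes through \cref{thm_main}, the Host--Kra seminorm estimates of \cref{sec_charfac}, and nilmanifold equidistribution in \cref{sec_nilfac}. The correspondence step in your sketch matches \cref{thm_correspondence} and is fine. The gaps are in how you propose to close the ergodic statement.

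First, positivity does not survive the van der Corput step: vdC gives an \emph{upper} bound on the $L^2$ norm of the averages in terms of averages of shifted products, so the PET induction cannot be run on the recurrence statement itself, and in particular it does not ``reduce to a linear family handled by Furstenberg's theorem.'' Its only role (both in \cite{BL96}-style arguments and in \cref{prop_characteristic_factor_main} of this paper) is to show that a suitable factor is characteristic; the positivity must then be established separately on that factor. Second, your base case is misstated in two ways. The claimed lower bound of order $\mu(A)^{k+1}$ is false in general for polynomial families with zero constant term (already for $q_i(n)=in$ with $k\geq 4$, by Behrend-type examples as in Bergelson--Host--Kra); fortunately only positivity is needed. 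More importantly, the reason $q_i(0)=0$ matters is not that ``the orbit passes through $A$ at time $n=0$'' --- that is trivially true for any constant terms and yields nothing, since $n=0$ is not among the return times produced by the correspondence. The actual mechanism is that vanishing constant terms make $q_1,\dots,q_k$ jointly intersective, which forces the polynomial orbit $n\mapsto (a^{q_1(n)},\dots,a^{q_k(n)})\Gamma^k$ on the nilfactor to accumulate at the identity coset along a subprogression (this is exactly where the paper invokes \cite[Proposition 2.3]{BLL08} in \cref{sec_nilfac}); positivity then follows by integrating a continuous function supported near the diagonal. Without this step your argument does not produce a single $n\geq 1$ with $\mu(A\cap T^{-q_1(n)}A\cap\cdots\cap T^{-q_k(n)}A)>0$, so as written the proof is incomplete, though the skeleton is repairable along the standard lines.
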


\cref{thm_poly_szmeredi} was later improved in \cite{BLL08} to give an ``if and only if'' condition. 
A finite collection of polynomials $q_1,\ldots,q_k\in\Z[t]$ is called \define{jointly intersective} if for all $m\in\N$ there is $n\in\N$ such that $q_1(n)\equiv\ldots \equiv q_k(n)\equiv {0}\bmod{m}$.

\begin{Theorem}[\cite{BLL08}]
\label{thm_poly_szmeredi_iff}
Given $q_1,\ldots,q_k\in\Z[t]$ the following are equivalent:
\begin{enumerate}
[label=(\roman{enumi}),ref=(\roman{enumi}),leftmargin=*]
\item
The polynomials $q_1,\ldots,q_k$ are jointly intersective.
\item
For any set $E\subset \N$ of positive upper density there exist $a,n\in\N$ such that $\{a,\, a+q_1(n),\ldots,a+q_k(n)\}\subset E$.
\end{enumerate}
\end{Theorem}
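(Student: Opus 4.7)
The direction (ii)$\Rightarrow$(i) is immediate: if $q_1,\ldots,q_k$ fail to be jointly intersective, pick a witness modulus $m\in\N$ and let $E=m\N$, which has upper density $1/m>0$; any admissible configuration inside $m\N$ would force $q_i(n)\equiv 0\bmod m$ for every $i$, contradicting the choice of $m$.

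For the substantive direction (i)$\Rightarrow$(ii), the plan is to reduce to the Bergelson--Leibman polynomial \Szemeredi{} theorem (\cref{thm_poly_szmeredi}) by exploiting joint intersectivity to restrict to an arithmetic progression on which every $q_i$ takes values divisible by a prescribed modulus. Via Furstenberg's correspondence principle, it suffices to show that for every measure-preserving system $(X,\mathcal{B},\mu,T)$ and every $A\in\mathcal{B}$ with $\mu(A)>0$, some $n\in\N$ satisfies $\mu(A\cap T^{-q_1(n)}A\cap\cdots\cap T^{-q_k(n)}A)>0$.

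Fix $m\in\N$ (eventually taken large) and, using joint intersectivity, pick $n_0\in\N$ with $m\mid q_i(n_0)$ for all $i$. Since $q(x+h)-q(x)$ is divisible by $h$ for any $q\in\Z[t]$ and integers $x,h$, the polynomials
\[
\tilde q_i(n)\;:=\;\frac{q_i(n_0+mn)}{m}
\]
again lie in $\Z[t]$, and along the progression $n\mapsto n_0+mn$ one has the identity $T^{q_i(n_0+mn)}=(T^m)^{\tilde q_i(n)}$. The problem is thereby reduced to establishing multiple recurrence for the polynomials $\tilde q_i$ in the auxiliary system $(X,\mathcal{B},\mu,T^m)$.

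The main obstacle is that $\tilde q_i(0)=q_i(n_0)/m$ is generally nonzero and depends on $i$, so \cref{thm_poly_szmeredi} does not apply to the $\tilde q_i$ directly. Writing $\tilde q_i=c_i+p_i$ with $p_i(0)=0$, one must absorb the translations $(T^m)^{c_i}$. I would attempt this via the theory of characteristic factors for polynomial multiple averages (following Host--Kra and Bergelson--Host--Kra): such averages are controlled in $L^2$ by an inverse limit of nilfactors on which the relevant polynomial orbit is equidistributed inside an explicit sub-nilmanifold. Choosing $m$ to be divisible by a rational period extracted from the nilfactor renders the translations by the $c_i$ asymptotically trivial, transferring the lower bound from the zero-constant-term case back to the set $A$ itself. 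A more self-contained alternative would be a PET-induction scheme adapted to the intersective setting, in the spirit of the original proof of \cref{thm_poly_szmeredi}, which bypasses the nilpotent machinery at the cost of more intricate combinatorics.
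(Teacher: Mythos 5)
Your easy direction (ii)$\Rightarrow$(i) is fine, and so is the reduction along the progression $n\mapsto n_0+mn$: since $q_i(n_0+mn)=q_i(n_0)+m\cdot(\text{integer-coefficient polynomial in }n)$ and $m\mid q_i(n_0)$, the polynomials $\tilde q_i$ do lie in $\Z[t]$ and $T^{q_i(n_0+mn)}=(T^m)^{\tilde q_i(n)}$. The genuine gap is the step you describe as ``absorbing the translations.'' After your reduction one must show that the shifts $(T^m)^{c_i}=T^{q_i(n_0)}$ are negligible on the characteristic (pro-nil) factor, and ``choosing $m$ divisible by a rational period extracted from the nilfactor'' cannot accomplish this: rational periods only control the finite/rational part of the nilfactor, which is exactly the part that intersectivity already handles through divisibility. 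The obstruction sits in the totally ergodic part, which has no nontrivial rational period. For instance, if the nilfactor contains an irrational rotation by $\alpha$, positivity of the limiting correlation requires $\|q_i(n_0)\alpha\|$ (distance to $\Z$) to be small for all $i$ simultaneously, i.e.\ $n_0$ must be chosen so that the numbers $q_i(n_0)$ lie in a prescribed (nil-)Bohr set, not merely in $m\Z$; note also that the constants $c_i=q_i(n_0)/m$ are unbounded as $m$ grows, so they cannot be ``asymptotically trivial'' for free. Producing such $n_0$ for an arbitrary jointly intersective family is precisely the main content of \cite{BLL08}: one shows that joint intersectivity yields a common zero in the profinite integers and deduces that on any nilmanifold the sequence $\big(a^{q_1(n)},\dots,a^{q_k(n)}\big)\Gamma^k$ accumulates at $1_{X^k}$ — this is the statement the present paper invokes as \cite[Proposition 2.3]{BLL08} in the proof of \cref{thm_nilequidistribution_2_1_new}. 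Your sketch presupposes exactly this conclusion rather than proving it.

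Two smaller points. First, the suggested fallback of a ``PET-induction scheme bypassing the nilpotent machinery'' is not realistic: PET-type arguments give the zero-constant-term case (\cref{thm_poly_szmeredi}), but no elementary proof of the intersective case is known; all known proofs of (i)$\Rightarrow$(ii) pass through nilsystems or equivalent inverse-theorem technology. Second, for comparison with the paper: \cref{thm_poly_szmeredi_iff} is not reproved here; the implication (i)$\Rightarrow$(ii) is recovered as the special case $f_i=q_i$ of \cref{thm_main_combinatorial} under condition \ref{itm_cond_1}, and the proof of that theorem (via \cref{thm_nilequidistribution_2_1_new}) again relies on the nilmanifold recurrence result of \cite{BLL08}. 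So your skeleton — correspondence principle, restriction to arithmetic progressions, characteristic factors, nilsystems — matches the standard route, but the decisive ingredient is missing.
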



In view of the above results, one is led to inquire whether similar results hold for more general classes of, say, eventually monotone sequences that do not grow too fast\footnote{We remark in passing that for any sequence of \emph{exponential} growth $q(n)$ there exists a set $E\subset\N$ with $\upperdens(E)>0$ which contains no pair of the form $\{x,x+q(n)\}$ with $x,n\in\N$ (cf. 
\cite[Corollary 4.18]{BBHS08}).}.
A natural class of sequences to consider are those arising from Hardy fields.


\begin{Definition}
Let $\mathsf{G}$ denote the ring (under pointwise addition and multiplication) of germs at infinity\footnote{A \define{germ
at infinity} is any equivalence class of real-valued functions in one real variable under the equivalence relationship
$(f\equiv g) \Leftrightarrow \big(\exists t_0>0
~\text{such that}~f(t)=g(t)~\text{for all}~t\in [t_0,\infty)\big)$.} of real valued functions defined
on a half-line $[s,\infty)$ for some $s\in\R$.
Any subfield of $\mathsf{G}$ that is closed under differentiation is called a \define{Hardy field}.
\end{Definition}

By abuse of language, we say that a function $f\colon[s,\infty)\to\R$ belongs to some Hardy field $\Hardy$, and write $f\in\Hardy$, if its germ at infinity belongs to $\Hardy$.
{For convenience, we assume all Hardy fields considered in this paper are \define{shift-invariant}, i.e., if $f\in \Hardy$ then $f_u\in\Hardy$ for all $u\in\R$, where $f_u(t)=f(t+u)$.}
A classical example of a {(shift-invariant)} Hardy field is the class of \define{logarithmico-exponential functions} introduced by Hardy in \cite{Hardy12,Hardy10}. It consists of all (germs of) real-valued functions that can be built from real polynomials, the logarithmic function $\log(t)$, and the exponential function $\exp(t)$ using the standard arithmetical operations $+$, $-$, $\cdot$, $\div$ and the operation of composition.
Examples of logarithmico-exponential functions are ${p(t)}/{q(t)}$ for $p(t),q(t)\in \R[t]$, $t^c$ for $c\in \R$, ${t}/{\log (t)}$ and $e^{\sqrt{t}}$, as well as any products or linear combinations of the above.
Other Hardy fields contain even more exotic functions, such as $e^{\sqrt{\log t}}\Gamma(t)$ and $t^c\zeta(t)$ for any $c\in\R$, where $\zeta$ is the Riemann zeta function and $\Gamma$ is the usual gamma function (cf.~\cite{Boshernitzan84a}).
For more information on Hardy fields we refer the reader to \cite{Boshernitzan81,Boshernitzan94, Frantzikinakis09}.

Another analogue of \Szemeredi{}'s Theorem, which involves functions from a Hardy field, is due to Frantzikinakis \cite{Frantzikinakis15b} (see also \cite{FW09,Frantzikinakis10}).
Throughout the paper we use $\lfloor.\rfloor\colon\R\to\Z$ to denote the \define{floor function}, i.e., for all $x\in\R$ the expression $\lfloor x \rfloor$ stands for the largest integer less than or equal to $x$.

\begin{Theorem}[\cite{Frantzikinakis15b}; cf.\ also {\cite[Theorem 2.10]{Frantzikinakis10}}]
\label{thm_frantzi}
Let $f_1,\ldots,f_k$ be functions from a {Hardy field} such that for every $f\in\{f_1,\ldots,f_k\}$ there is $\ell\in\N$ such that ${f(t)}/{t^{\ell}}\to 0$ and ${t^{\ell-1}\log(t)}/{f(t)}\to 0$ as $t\to\infty$ and have \define{different growth}, in the sense that for all $i\neq j$ either ${f_{i}(t)}/{f_{j}(t)}\to 0$ or ${f_{j}(t)}/{f_{i}(t)}\to 0$. Then for any set $E\subset \N$ of positive upper density there exist $a,n\in\N$ such that $\{a,\, a+\lfloor f_1(n)\rfloor ,\ldots,a+\lfloor f_k(n)\rfloor \}\subset E$.
\end{Theorem}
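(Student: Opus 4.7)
The plan is to follow the standard ergodic pathway: reduce the combinatorial statement to a multiple recurrence problem via Furstenberg's correspondence principle, and then control the resulting multiple ergodic averages using the Hardy field structure. Concretely, Furstenberg's correspondence principle produces a measure preserving system $(X, \B, \mu, T)$ and a set $A \in \B$ with $\mu(A) \geq \upperdens(E)$ satisfying
$$\upperdens\bigl(E \cap (E-n_1) \cap \cdots \cap (E-n_k)\bigr) \;\geq\; \mu\bigl(A \cap T^{-n_1}A \cap \cdots \cap T^{-n_k}A\bigr)$$
for all $n_1, \ldots, n_k \in \N$. It therefore suffices to show
$$\liminf_{N\to\infty} \frac{1}{N}\sum_{n=1}^{N} \mu\bigl(A \cap T^{-\lfloor f_1(n)\rfloor}A \cap \cdots \cap T^{-\lfloor f_k(n)\rfloor}A\bigr) \;>\; 0,$$
because positivity of this Ces\`aro average forces at least one summand on the left-hand side to be positive for some $n$, producing the required $a$ and $n$ inside $E$.

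The main work is to establish $L^2(\mu)$ convergence of the multiple ergodic average
$$A_N(h_1, \ldots, h_k) \;=\; \frac{1}{N}\sum_{n=1}^{N} T^{\lfloor f_1(n)\rfloor}h_1 \cdots T^{\lfloor f_k(n)\rfloor}h_k$$
and to identify its limit. The growth conditions $f(t)/t^\ell \to 0$ and $t^{\ell-1}\log t / f(t) \to 0$ squeeze each $f_i$ strictly between consecutive integer polynomial growth rates, so no $f_i$ is asymptotic to a polynomial; together with the different growth hypothesis this should render the $k$-tuple $(\lfloor f_1(n)\rfloor, \ldots, \lfloor f_k(n)\rfloor)$ asymptotically ``generic''. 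I would run a PET-style van der Corput induction adapted to Hardy fields, using the shift $n \mapsto n+h$ to replace each $f_i$ by a discrete derivative $f_i(n+h)-f_i(n)$ of strictly smaller growth (exploiting closure of the Hardy field under differentiation together with a Taylor-type estimate). Iterating this scheme should reduce the question to a single Hardy field function of slow growth, at which point the growth gap hypothesis directly yields equidistribution of $\alpha\lfloor f_i(n)\rfloor$ modulo $1$ for every irrational $\alpha \in \T$. This identifies the $L^2$ limit as $\int h_1 \d\mu \cdots \int h_k \d\mu$; specialising to $h_i = \mathbf{1}_A$ gives the positive value $\mu(A)^{k+1}$.

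The main obstacle is managing the PET/van der Corput induction uniformly across the allowed growth rates. One must carefully track how discrete differencing moves a function within the Hardy field, and ensure that no intermediate function drifts into the ``polynomial window'' $t^{\ell-1} \lesssim f(t) \lesssim t^{\ell-1}\log t$, where extra nilpotent structure on the characteristic factor would obstruct the identification of the limit. The lower bound hypothesis $t^{\ell-1}\log t / f(t) \to 0$ is designed precisely to rule this out, but verifying this preservation at every stage of the induction, and checking that the total ordering of growth rates among the $f_i$ survives each differencing step, is the main piece of technical bookkeeping. Once that induction is set up correctly, the equidistribution step and the passage from the $L^2$ limit to positive multiple recurrence are comparatively routine.
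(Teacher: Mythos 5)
Your overall pathway (Furstenberg correspondence, then an $L^2$ mean convergence/positivity statement for the multiple averages, established by a van der Corput/PET induction adapted to Hardy fields) is indeed the general route both Frantzikinakis and this paper take; note, though, that the paper does not prove this theorem directly at all -- it recovers it as a special case of Theorem A (via Corollary A1), whose proof runs through the weighted ($W$-averaged) Theorem B. The genuine gap in your sketch is the final step. The PET/van der Corput induction does not ``reduce the question to a single Hardy field function'' whose circle equidistribution identifies the limit: what it actually yields (Section 4 of the paper) is that the averages are controlled by Host--Kra uniformity seminorms $\lhk\cdot\rhk_s$, i.e.\ the characteristic factor is an $s$-step \emph{nilfactor}, not the Kronecker factor. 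Equidistribution of $\alpha\lfloor f_i(n)\rfloor$ mod $1$ for irrational $\alpha$ only kills the degree-one obstruction. To identify the limit as $\prod_i\int h_i\,d\mu$ (equivalently, to get the lower bound $\mu(A)^{k+1}$) one must still prove that for a nilsystem the sequence $n\mapsto\big(a^{[f_1(n)]},\dots,a^{[f_k(n)]}\big)\Gamma^k$ equidistributes in the product of orbit closures; this is the substantive ingredient, handled in the paper by the reductions of Section 5 (embedding into a flow on a connected, simply connected group, removing the rounding function -- itself delicate because the relevant projection is discontinuous -- and then invoking Theorems D and E of \cite{Richter20arXiv}, which reduce nilmanifold equidistribution of Hardy sequences to the maximal torus). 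Without some version of this nilmanifold equidistribution input, the induction alone cannot deliver the product-of-integrals limit for $k\geq 2$.

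A secondary point: convergence of the \emph{plain} Ces\`aro averages to the product of integrals is not free under the stated hypotheses. The paper's convergence criterion (Corollary B1, i.e.\ Theorem B with $W(t)=t$) requires Property (P) for the span of \emph{derivatives} $\Hsp(f_1,\dots,f_k)$, and this can fail even when each $f_i$ lies in a window $t^{\ell-1}\log t\prec f_i\prec t^{\ell}$ and the $f_i$ have pairwise different growth (e.g.\ $f_1(t)=t^{3/2}$, $f_2(t)=\tfrac32 t^{5/2}+t\log\log t$ give $f_2'-\tfrac{15}{4}f_1=\log\log t+O(1)$, which is neither bounded nor $\succ\log t$). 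The paper sidesteps this by working with a compatible weight $W$ and then using $\limsup_N\frac1N\sum a(n)\geq\limsup_N\frac1{W(N)}\sum w(n)a(n)$, which is all the combinatorial conclusion needs; if you insist on unweighted averages and a $\liminf$, you must either justify Ces\`aro convergence under these exact hypotheses or switch to the weighted/limsup formulation.
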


Another variant of \Szemeredi{}'s Theorem, which was recently obtained by the authors in \cite{BMR20}, stands in general position to \cref{thm_frantzi}
and reveals a new phenomenon pertaining to multiple recurrence along a wide family of non-polynomial (and not necessarily Hardy) functions.
The following is one of the combinatorial corollaries of the main result in \cite{BMR20}.

\begin{Theorem}
\label{thm_thick_szemeredi}
Let $f$ be a function from a \define{Hardy field} and assume there is $\ell\in\N$ such that ${f(t)}/{t^{\ell}}\to 0$ and ${t^{\ell-1}}/{f(t)}\to 0$ as $t\to\infty$. Then for  any set $E\subset \N$ of positive upper density there exist $a,n\in\N$ such that $\{a,\, a+\lfloor f(n)\rfloor,a+\lfloor f(n+1)\rfloor,\ldots,a+\lfloor f(n+k)\rfloor \}\subset E$.
\end{Theorem}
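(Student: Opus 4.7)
The plan is to reduce the combinatorial statement to an ergodic one via Furstenberg's correspondence principle, and then establish the requisite multiple recurrence. Specifically, it suffices to show that for any invertible measure preserving system $(X,\B,\mu,T)$ and any $A\in\B$ with $\mu(A)>0$,
\[
\liminf_{N\to\infty}\frac{1}{N}\sum_{n=1}^N \mu\Big(A\cap T^{-\lfloor f(n)\rfloor}A\cap T^{-\lfloor f(n+1)\rfloor}A\cap\cdots\cap T^{-\lfloor f(n+k)\rfloor}A\Big)\,>\,0.
\]

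The structural observation driving the approach is that for a Hardy field function $f$ with $f(t)/t^\ell\to 0$ and $t^{\ell-1}/f(t)\to 0$, Taylor expansion yields $f(n+j)=f(n)+jf'(n)+O(j^2|f''(\xi)|)$, while the derivatives of $f$ remain in the Hardy field with $f'(t)\sim f(t)/t$ and $f''(t)=o(f'(t)/t)$, both of strictly smaller growth than $f$. Hence for each fixed $n$, the set $\{a+\lfloor f(n+j)\rfloor:0\le j\le k\}$ is, modulo bounded integer errors, an arithmetic progression based at $a+\lfloor f(n)\rfloor$ with common difference close to $\lfloor f'(n)\rfloor$. The problem thus splits into locating, for positive density of $n$, both a large shift $\lfloor f(n)\rfloor$ and a nested arithmetic progression of step size $\lfloor f'(n)\rfloor$ inside $E$.

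To implement this I would run a PET-style van der Corput induction on the multiple average
\[
\frac{1}{N}\sum_{n=1}^N T^{\lfloor f(n)\rfloor}h_0\cdot T^{\lfloor f(n+1)\rfloor}h_1\cdots T^{\lfloor f(n+k)\rfloor}h_k
\]
to identify a characteristic factor controlling it in $\Ltwo$. Since all the shifts have identical leading asymptotics, \cref{thm_frantzi} does not apply directly, but successive differences are governed by $f'$ of strictly smaller growth; iterating van der Corput should peel these off and eventually reduce the problem to a polynomial Szemer\'edi-type lower bound (accessible via \cref{thm_poly_szmeredi}) once the characteristic factor is shown to be a nilfactor. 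Positivity of the $\liminf$ will then follow from standard positivity arguments on nilsystems.

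The principal obstacle is the interaction between the floor function and the smooth linearization: although $f(n+j)-f(n)-jf'(n)\to 0$ as a real quantity, the discrete discrepancies $\lfloor f(n+j)\rfloor-\lfloor f(n)\rfloor-j\lfloor f'(n)\rfloor$ take integer values and produce genuine $O(1)$ shifts in the iterates of $T$ which, a priori, cannot be ignored. Taming them requires a joint equidistribution statement for the fractional parts $(\{f(n)\},\{f(n+1)\},\ldots,\{f(n+k)\})$ in $\T^{k+1}$ of the kind supplied by Boshernitzan-type theorems on Hardy field equidistribution, so that the $O(1)$ perturbations average coherently against the leading shifts. Propagating such equidistribution through every stage of the van der Corput reduction, while preserving the positivity at the end, is the most delicate part of the argument and is where the bulk of the technical work resides.
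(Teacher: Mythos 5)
Your overall architecture (Furstenberg correspondence, van der Corput/PET to reach a nil characteristic factor, then an analysis on nilsystems) is indeed the skeleton of the paper's argument, but there is a concrete gap at the very first reduction: you propose to prove that the \emph{liminf} of the unweighted \Cesaro{} averages of $\mu\big(A\cap T^{-\lfloor f(n)\rfloor}A\cap\cdots\cap T^{-\lfloor f(n+k)\rfloor}A\big)$ is positive for every system, and this statement is false under the hypotheses of \cref{thm_thick_szemeredi}. The hypotheses with $\ell=1$ allow very slowly growing functions such as $f(t)=\log\log t$; for the two-point rotation with $A$ a single atom, the integrand vanishes on the blocks $[e^{e^{2m+1}},e^{e^{2m+2}})$ (where all $\lfloor f(n+j)\rfloor$ are odd), whose relative length tends to $1$, so the \Cesaro{} liminf is $0$. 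This is exactly the phenomenon the paper flags after Corollaries B3 and B4 (the \Cesaro{} limits need not exist and the liminf can vanish), and it is why the paper's proof runs through weighted averages $\frac{1}{W(N)}\sum_n w(n)(\cdots)$ for a compatible weight $W$ satisfying Property \ref{property_P_W}, deducing only \emph{limsup} positivity for \Cesaro{} averages, which still suffices for the combinatorial conclusion. The same obstruction hits your proposed equidistribution input: Boshernitzan's theorem requires the function to stay $\succ\log t$ away from rational polynomials, which $\log\log t$ (or $\sqrt{\log t}$) does not satisfy, so the joint equidistribution of $(\{f(n)\},\dots,\{f(n+k)\})$ with respect to plain \Cesaro{} averages that you want to "propagate through the van der Corput reduction" is simply unavailable; the paper's substitute is equidistribution with respect to $W$-averages (\cref{prop_Bosh_W-averages}, imported from [Richter20arXiv]). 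In short, the missing idea is the compatible weight $W$, and without it both your target inequality and your key technical input fail.

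A secondary issue is the endgame. Positivity on the nilfactor is not obtained in the paper by invoking a polynomial \Szemeredi{}-type bound such as \cref{thm_poly_szmeredi} plus "standard positivity arguments": one needs that the sequence $\big(a^{[f(n)]},\dots,a^{[f(n+k)]}\big)\Gamma^{k+1}$ equidistributes (with $W$-averages) toward a measure whose support contains the identity coset, and this rests on the nilmanifold equidistribution theorems of [Richter20arXiv] together with the algebraic fact that no nonzero linear combination of the shifts $t\mapsto f(t+j)$ stays within $o(1)$ of a nonzero polynomial (combinations with vanishing low moments tend to $0$, the rest diverge from every polynomial), plus a delicate treatment of the rounding function. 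Equivalently, at the combinatorial level the paper obtains \cref{thm_thick_szemeredi} as a special case of \cref{thm_main_combinatorial}: replacing $f(n+j)$ by $f(n+j)-\tfrac12$ converts $\lfloor\cdot\rfloor$ into $[\cdot]$ and the family then falls under Corollary A1 (its $\poly$ set is $\{0\}$), i.e.\ under the jointly intersective condition \ref{itm_cond_1}. Your heuristic that the pattern is an arithmetic progression with step $\lfloor f'(n)\rfloor$ up to $O(1)$ errors is fine as intuition, but those integer discrepancies genuinely alter the pattern and must be handled by the measure-support/equidistribution analysis rather than absorbed by averaging.
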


Both Theorems \ref{thm_frantzi} and \ref{thm_thick_szemeredi} leave some space for further inquiry.
For instance, 
one would like to know if \cref{thm_thick_szemeredi} has a version for several functions from $\Hardy$ which deals with the patterns
$$\{a,\, a+[f_1(n)],\ldots,a+[f_1(n+\ell)],\ldots,a+[f_k(n)],\ldots,a+[f_k(n+\ell)]\}\subset E.$$
As for \cref{thm_frantzi}, a natural generalization is addressed by the following conjecture of Frantzikinakis.
Given a finite collection of functions $f_1,\ldots,f_k$, define
\[
\sps(f_1,\ldots,f_k) \coloneqq \big\{c_1 f_1(t)+\ldots+c_k f_k(t): (c_1,\ldots,c_k)\in\R^k\big\},
\]
and
\[
\sp(f_1,\ldots,f_k) \coloneqq \big\{c_1 f_1(t)+\ldots+c_k f_k(t): (c_1,\ldots,c_k)\in\R^k\setminus\{0\}\big\}.
\]
Also, we will write $f(t)\prec g(t)$ when ${g(t)}/{f(t)}\to\infty$ as $t\to\infty$, and $f(t)\ll g(t)$ when there exist $C>0 $ and $t_0\geq 1$ such that $f(t)\leq C g(t)$ for all $t\geq t_0$.  
We say $f(t)$ has \define{polynomial growth} if it satisfies $|f(t)|\ll t^d$ for some $d\in\N$. 

\begin{Conjecture}[see {\cite[Problems 4 and 4']{Frantzikinakis10}} and {\cite[Problem 25]{Frantzikinakis16}}]
\label{conj_frantzi}
Let $f_1,\ldots,f_k$ be functions of polynomial growth from a Hardy field such that $$|f(t)-q(t)|\to\infty$$ for all $f\in\sp(f_1,\dots,f_k)$ and $q\in\Z[t]$. Then for any set $E\subset \N$ of positive upper density there exist $a,n\in\N$ such that $\{a,\, a+\lfloor f_1(n)\rfloor,\ldots,a+\lfloor f_k(n)\rfloor \}\subset E$.
\end{Conjecture}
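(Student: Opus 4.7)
The plan is to reduce the combinatorial statement to a multiple recurrence result via Furstenberg's correspondence principle. It suffices to show that for every invertible measure-preserving system $(X,\B,\mu,T)$ and every $A\in\B$ with $\mu(A)>0$,
$$\liminf_{N\to\infty}\frac{1}{N}\sum_{n=1}^N\mu\Big(A\cap T^{-\lfloor f_1(n)\rfloor}A\cap\cdots\cap T^{-\lfloor f_k(n)\rfloor}A\Big)>0,$$
so I will work in the ergodic-theoretic setting throughout.

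The first step is to establish $L^2$-convergence of the averages $A_N(h_1,\ldots,h_k)=\frac{1}{N}\sum_{n=1}^N T^{\lfloor f_1(n)\rfloor}h_1\cdots T^{\lfloor f_k(n)\rfloor}h_k$ for bounded $h_i$. I would do this by a PET-type induction suited to tuples of Hardy field functions, repeatedly invoking a van der Corput lemma to reduce the complexity of the tuple (under a well-ordering based on growth rates). At each stage the tuple is replaced by differenced functions such as $f_i(n+h)-f_j(n)$, which still live in the Hardy field. Simultaneously, this PET induction identifies a Host--Kra nilfactor $\mathcal{Z}_s$ of some finite step $s$ as a characteristic factor, so one may replace $X$ by an inverse limit of nilsystems and, by an approximation argument, reduce to the case where $X=G/\Gamma$ is a nilmanifold, $T$ is a nilrotation, and the $h_i$ are continuous.

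The second step is an equidistribution statement on $(G/\Gamma)^k$. The aim is to show that the orbit $\big(T^{\lfloor f_1(n)\rfloor}x,\ldots,T^{\lfloor f_k(n)\rfloor}x\big)$ equidistributes, as $n\to\infty$, in a sub-nilmanifold $Y\subset (G/\Gamma)^k$ whose projection to each coordinate is onto, and in particular contains the diagonal. For this I would adapt Leibman's equidistribution theorem for polynomial sequences on nilmanifolds to the Hardy-field setting, in the spirit of Frantzikinakis's earlier work, using the hypothesis that no nontrivial element of $\sp(f_1,\ldots,f_k)$ stays within bounded distance of an integer polynomial. That condition is precisely what excludes the "resonant" linear combinations which could shrink the closure of the orbit below the diagonal. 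Once this equidistribution is in hand, applying it to $\mathbf{1}_A\otimes\cdots\otimes\mathbf{1}_A$ on $(G/\Gamma)^k$ and using that $Y$ contains the diagonal yields a limit bounded below by something like $\mu(A)^{k+1}$, giving the required positive lower bound.

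The principal obstacle is controlling the PET/van der Corput induction so that the hypothesis on $\sp(f_1,\ldots,f_k)$ is preserved (in a suitably adapted form) after differencing: one must check that the no-polynomial-linear-combination assumption passes to the new Hardy tuples produced at each induction step, and that in the "leaf" cases one has strong enough equidistribution on the relevant nilfactor. A secondary technical hurdle is the nilmanifold equidistribution theorem for Hardy-field tuples that mix non-polynomial growth with potentially polynomial cross-terms; handling this uniformly (independently of the step $s$ produced by the induction) and across general, not necessarily connected, nilmanifolds is the main analytic content behind the argument.
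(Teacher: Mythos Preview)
Your overall architecture (Furstenberg correspondence, characteristic factors via a PET/van der Corput scheme, reduction to nilsystems, then an equidistribution result showing the limit is at least $\mu(A)^{k+1}$) matches the paper's route. However, there is a genuine gap in the first step that the paper has to work around and that your proposal does not address.

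You propose to prove
\[
\liminf_{N\to\infty}\frac{1}{N}\sum_{n=1}^N\mu\Big(A\cap T^{-\lfloor f_1(n)\rfloor}A\cap\cdots\cap T^{-\lfloor f_k(n)\rfloor}A\Big)>0
\]
and, as a preliminary, $L^2$-convergence of the Ces\`aro averages $\frac1N\sum_{n=1}^N T^{\lfloor f_1(n)\rfloor}h_1\cdots T^{\lfloor f_k(n)\rfloor}h_k$. Under the sole hypothesis of the conjecture (that every $f\in\sp(f_1,\dots,f_k)$ drifts away from every $q\in\Z[t]$), \emph{neither} of these statements holds in general: the Ces\`aro limit need not exist, and the $\liminf$ can equal $0$. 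The obstruction is that some $f\in\Hsp(f_1,\dots,f_k)$ may satisfy $|f(t)-p(t)|\to\infty$ for every $p\in\R[t]$ while $|f(t)-p(t)|\prec\log t$; in that regime the rounded orbit $\lfloor f(n)\rfloor$ is too slowly varying for Ces\`aro equidistribution, and the PET induction (which produces exactly such differenced functions) breaks down at the leaf cases. The paper explicitly notes that replacing $\limsup$ by $\liminf$ (or by $\lim$) in its Corollary~B3 is impossible.

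The paper's fix is to replace Ces\`aro averages by $W$-weighted averages $\frac{1}{W(N)}\sum_{n=1}^N w(n)\,(\cdots)$ for a weight $W\in\Hardy$ chosen \emph{compatible} with $f_1,\dots,f_k$, meaning that every $f\in\Hsp(f_1,\dots,f_k)$ and $p\in\R[t]$ satisfies either $|f-p|\ll1$ or $|f-p|\succ\log W$ (Property~(P)). Such a $W$ always exists (Lemma~2.2), and with it the entire machine runs: the PET/van der Corput argument is carried out for $W$-averages (this requires a weighted van der Corput lemma and a separate treatment of the sub-linear regime via a change of variables), and the nilmanifold equidistribution is likewise proved for $W$-averages. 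One then obtains a genuine limit for the $W$-averages, bounded below by $\mu(A)^{k+1}$, and only at the very end passes back to $\limsup$ of Ces\`aro averages via the elementary inequality $\limsup_N\frac1N\sum a(n)\ge\limsup_N\frac1{W(N)}\sum w(n)a(n)$. Your proposal is missing precisely this weighting device; without it the convergence and the positivity of the $\liminf$ that you plan to establish are simply false.
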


\begin{Remark}
\label{rem_other_rounding_functions}
In the statements of Theorems \ref{thm_frantzi} and \ref{thm_thick_szemeredi} and \cref{conj_frantzi}, it is possible to replace the floor function $\lfloor.\rfloor\colon\R\to\Z$ with other rounding functions, such as the \define{ceiling function} $\lceil\cdot \rceil\colon\R\to\Z$, or the \define{rounding to the closest integer function} $[.]\colon\R\to\Z$.
Indeed, since $\lceil x\rceil=-\lfloor -x\rfloor$ and $[x]=\lfloor x+ 0.5\rfloor$, replacing $\lfloor.\rfloor$ with either $\lceil . \rceil$ or $[.]$ actually yields equivalent formulations of those statements.
\end{Remark}

The following theorem is the main combinatorial result of this paper. It contains Theorems~\ref{thm_szmeredi}, \ref{thm_poly_szmeredi}, \ref{thm_poly_szmeredi_iff}, \ref{thm_frantzi},
\ref{thm_thick_szemeredi} as special cases and confirms \cref{conj_frantzi}.
We denote by $\poly(f_1,\ldots,f_k)$ the set of all real polynomials that can be ``generated'' using linear combinations of the functions $f_1,\ldots,f_k$.
More precisely,
$$
\poly(f_1,\ldots,f_k)\,=\,\left\{p\in\R[t]: \exists f\in\sps(f_1,\dots,f_k)\,\text{with}\, \lim_{t\to\infty}|f(t)-p(t)|=0\right\}.
$$

\begin{Maintheorem}
\label{thm_main_combinatorial}
Let $f_1,\ldots,f_k$ be functions of polynomial growth from a Hardy field and assume that at least one of the following two conditions holds:
\begin{enumerate}
[label=(\arabic{enumi}),ref=(\arabic{enumi}),leftmargin=*]
\item
\label{itm_cond_2}
For all $q\in \Z[t]$ and $f\in\sp(f_1,\ldots,f_k)$ we have $\lim_{t\to\infty}|f(t)-q(t)|= \infty$.
\item
\label{itm_cond_1}
There is a jointly intersective collection of polynomials $q_1,\ldots,q_\ell\in\Z[t]$ such that
\vspace{-.3em}
$$
\poly(f_1,\dots,f_k)\subset \sps(q_1,\dots,q_\ell).
$$
\end{enumerate}
\vspace{-.2em}
Then for any set $E\subset \N$ of positive upper density there exist $a,n\in\N$ such that $\{a,\, a+[f_1(n)],\ldots,a+[f_k(n)]\}\subset E$.
\end{Maintheorem}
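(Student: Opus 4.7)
The first step is to apply the Furstenberg correspondence principle to reduce the combinatorial assertion to an ergodic-theoretic one: given an invertible measure preserving system $(X,\B,\mu,T)$ and $A\in\B$ with $\mu(A)>0$, it suffices to establish
$$
\limsup_{N\to\infty}\frac{1}{N}\sum_{n=1}^N \mu\bigl(A\cap T^{-[f_1(n)]}A\cap\cdots\cap T^{-[f_k(n)]}A\bigr)>0.
$$
Accordingly, the bulk of the work concerns the multiple ergodic averages
$$
A_N(h_1,\dots,h_k) \coloneqq \frac{1}{N}\sum_{n=1}^N T^{[f_1(n)]}h_1\cdots T^{[f_k(n)]}h_k,\qquad h_i\in\Linfty(X),
$$
and the strategy is to establish, in each of the two cases of the hypothesis, an $\Ltwo$-limit whose integral is bounded from below by a positive quantity when $h_i=\mathbf{1}_A$.

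Under condition \ref{itm_cond_2}, I expect that, on each ergodic component, $A_N(h_1,\dots,h_k)$ converges in $\Ltwo$ to the product $\prod_{i=1}^k \int h_i\,d\mu$, so that $\limsup$ above equals $\mu(A)^{k+1}>0$. The mechanism should be a PET-style induction: repeated applications of van der Corput's inequality replace each iterate $[f_i(n)]$ by a difference $[f_i(n+h)]-[f_i(n)]$, producing new Hardy iterates of strictly decreasing ``complexity''. The non-polynomial hypothesis on $\sp(f_1,\dots,f_k)$ is preserved under such differencing, so at each stage the surviving Hardy functions are far from integer polynomials; Boshernitzan- and Frantzikinakis-type equidistribution statements on the Kronecker factor and, inductively, on the relevant characteristic nilfactors of $T$ then kill all off-diagonal contributions.

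Under condition \ref{itm_cond_1}, the polynomial parts of $f_1,\dots,f_k$ lie, via $\poly(f_1,\dots,f_k)\subset\sps(q_1,\dots,q_\ell)$, in the span of a jointly intersective family. The idea is to decompose $f_i=p_i+e_i$, where $p_i\in\Z[t]$ comes from the polynomial part (hence $p_1,\dots,p_k$ inherit joint intersectivity from $q_1,\dots,q_\ell$) and $e_i$ is either the zero germ or a non-polynomial Hardy function to which the arguments from case \ref{itm_cond_2} apply. A comparison lemma, proved by a further van der Corput / PET reduction, should show that in $\Ltwo$ the averages $A_N(h_1,\dots,h_k)$ differ negligibly from the purely polynomial averages
$$
\frac{1}{N}\sum_{n=1}^N T^{p_1(n)}h_1\cdots T^{p_k(n)}h_k,
$$
after which \cref{thm_poly_szmeredi_iff}, applied to the jointly intersective collection $p_1,\dots,p_k$, guarantees a positive $\limsup$ when $h_i=\mathbf{1}_A$.

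The principal obstacle will be making the ``comparison'' rigorous on the correct characteristic factor of $T$: one needs that $[f_i(n)]-p_i(n)$, and more generally the collective non-polynomial remainders, equidistribute on the nilfactors controlling these averages. Identifying such a factor for iterates that \emph{mix} integer polynomials with integer parts of Hardy functions, and proving the required nilmanifold equidistribution (extending the results of \cite{Frantzikinakis10} to this mixed setting), is where I expect the main technical content to reside. A closely related difficulty is to isolate a class of tuples of Hardy-plus-polynomial iterates that is stable under the PET differencing operation and along which either of the hypotheses \ref{itm_cond_2} or \ref{itm_cond_1} is preserved with strictly decreasing complexity, so that the induction actually terminates.
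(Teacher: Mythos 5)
Your opening move (Furstenberg correspondence) and your general toolkit (van der Corput/PET plus equidistribution on characteristic nilfactors) are indeed the paper's, but two pivotal claims in your plan are false, and they sit exactly where the paper needs additional ideas. Under condition \ref{itm_cond_2} you assert that the unweighted averages $\frac1N\sum_{n=1}^N T^{[f_1(n)]}h_1\cdots T^{[f_k(n)]}h_k$ converge in $\Ltwo$ to $\prod_i\int h_i\d\mu$ on each ergodic component. Condition \ref{itm_cond_2} admits arbitrarily slowly growing functions, e.g.\ $f_1(t)=\sqrt{\log t}$, for which $[f_1(n)]$ is constant over enormously long blocks; already on a rotation of two points the \Cesaro{} averages of $\mu(A\cap T^{-[f_1(n)]}A)$ oscillate, the limit does not exist and the liminf is $0$ (the paper stresses this right after Corollary B4). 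Relatedly, the equidistribution input you invoke requires $|f(t)-p(t)|\succ\log t$, whereas condition \ref{itm_cond_2} only gives $|f(t)-p(t)|\to\infty$. The missing device is the compatible weight: the paper proves convergence and the product formula only for $W$-weighted averages under Property \ref{property_P_W} (\cref{thm_main}, using the $W$-averaged equidistribution results of \cite{Richter20arXiv}), and then recovers the combinatorial statement from $\limsup_N\frac1N\sum_{n=1}^N a(n)\ge\limsup_N\frac1{W(N)}\sum_{n=1}^N w(n)a(n)$ for bounded $a$.

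Under condition \ref{itm_cond_1} there are two problems. First, the decomposition $f_i=p_i+e_i$ with $p_i\in\Z[t]$ jointly intersective and $e_i$ non-polynomial need not exist: the hypothesis only places $\poly(f_1,\dots,f_k)$ inside the \emph{real} span $\sps(q_1,\dots,q_\ell)$, so the polynomial parts may have irrational coefficients; for instance $f_1(t)=\sqrt2\,t^2$ satisfies condition \ref{itm_cond_1} (its $\poly$ is $\R t^2\subset\sps(t^2)$) but admits no such splitting. Second, even when the splitting exists, your comparison lemma fails: take $f_1(n)=n+\sqrt n$ and $f_2(n)=2n+\sqrt n$, so $p_1(n)=n$, $p_2(n)=2n$. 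Then $2[f_1(n)]-[f_2(n)]=[\sqrt n]$, so on an irrational rotation by $\alpha$, testing against $h_1(x)=e^{4\pi ix}$, $h_2(x)=e^{-2\pi ix}$, the mixed averages tend to $0$ (since $([\sqrt n]\,\alpha)_n$ equidistributes mod $1$) while the polynomial averages are identically $e^{2\pi ix}$; the two expressions are therefore not negligibly close in $\Ltwo$, and \cref{thm_poly_szmeredi_iff} cannot be invoked as a black box. The paper's route in this case is genuinely different: after the characteristic-factor theorem (PET with a weighted van der Corput lemma and Host--Kra seminorms) and a reduction to nilsystems $G/\Gamma$ with $G$ connected and simply connected, it proves a $W$-averaged equidistribution theorem for $n\mapsto(a^{[f_1(n)]},\dots,a^{[f_k(n)]})\Gamma^k$ and shows, via the normal form of \cref{lem_simple_normal_form_new_new} and \cite[Proposition 2.3]{BLL08} applied at the nilmanifold level, that the identity coset lies in the support of the limiting measure, which yields positivity directly. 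As written, your proposal lacks both the weighted-average framework and this support argument, so it does not close.
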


\begin{Remark}\label{rmrk_roundingfunction}
Condition \ref{itm_cond_2} in Theorem~\ref{thm_main_combinatorial} will still hold if all the $f_i$ are shifted.
Together with \cref{rem_other_rounding_functions}, this observation implies that if we are in case \ref{itm_cond_2} of Theorem~\ref{thm_main_combinatorial} then the conclusion of Theorem~\ref{thm_main_combinatorial} remains true, even if the closest integer function $[\cdot]$ is replaced by either $\lfloor . \rfloor$ or $\lceil.\rceil$ (or indeed with any other rounding function of the form $\lfloor x+c\rfloor$, $c\in[0,1)$). 
This observation, however, does not apply to condition \ref{itm_cond_1} -- see \cref{example_1} below.
\end{Remark}

Besides resolving \cref{conj_frantzi}, Theorem~\ref{thm_main_combinatorial} also implies numerous new results. Corollaries A1, A2, A3, and A4 below comprise a selection of such results that we consider to be of particular interest.


The following rather special case of Theorem~\ref{thm_main_combinatorial} already implies Theorems \ref{thm_szmeredi}, \ref{thm_poly_szmeredi}, \ref{thm_frantzi}, and \ref{thm_thick_szemeredi}.

\begin{named}{Corollary A1}{}
Let $f_1,\ldots,f_k$ be functions of polynomial growth from a Hardy field and assume that every $p\in \poly(f_1,\ldots,f_k)$ satisfies $p(0)=0$.
Then for any set $E\subset \N$ of positive upper density there exist $a,n\in\N$ such that $\{a,\, a+[f_1(n)],\ldots,a+[f_k(n)]\}\subset E$.
\end{named}

The fact that any set of positive density contains an arrangement of the form
$$
\{a,\, a+[n^{c_1}],\ldots,a+[n^{c_k}]\},
$$
where $c_1,\ldots,c_k$ are all positive integers, follows from \cref{thm_poly_szmeredi}, whereas the case when $c_1,\ldots,c_k$ are all positive non-integers follows from \cref{thm_frantzi}. The next corollary of Theorem~\ref{thm_main_combinatorial} deals with the previously unknown case where the constants $c_1,\ldots,c_k$ are a mix of integers and non-integers.

\begin{named}{Corollary A2}{}
For any $c_1,\ldots,c_k>0$ and any set $E\subset \N$ of positive upper density there exist $a,n\in\N$ such that $\{a,\, a+[n^{c_1}],\ldots,a+[n^{c_k}]\}\subset E$. The same is true with $[.]$ replaced by either $\lfloor.\rfloor$ or $\lceil.\rceil$.
\end{named}

Here are examples of other configurations that were not covered by previously known results:
$$
\{a,\, a+[ n^{c_1}],a+[ n^{c_1}+n^{c_2}]\},~\text{or}~\{a,\, a+n, a+[n^{c}]\},~\text{or}~\{a,\,a+[ \log(n)],  a+[n^{c}]\}.
$$
The following corollary takes care of these and more general configurations.
We denote by $\log_m(t)$ the $m$-th iterate of $\log(t)$, that is, $\log_1(t)=\log(t)$, $\log_2(t)=\log\log(t)$, $\log_3(t)=\log\log\log(t)$, and so on.
Let $\mathcal{K}$ denote the smallest algebra of functions that contains $t^c$ for all $c>0$ and $\log_m^r(t)$ for all $m\in\N$ and $r>0$.
We stress that any polynomial $p$ in $\mathcal{K}$ satisfies $p(0)=0$ (in particular, $\mathcal{K}$ doesn't contain non-zero constant functions). Therefore, from Corollary A1, we obtain the following clean statement.

\begin{named}{Corollary A3}{}
For any $f_1,\ldots,f_k\in\mathcal{K}$ and $E\subset \N$ with $\upperdens(E)>0$ there are $a,n\in\N$ such that $\{a,\, a+[f_1(n)],\ldots,a+[f_k(n)]\}\subset E$.
\end{named}

We remark that Corollaries A1 and A3 are not true if $[.]$ is replaced by either $\lfloor.\rfloor$ or $\lceil.\rceil$ (see \cref{example_1} below).

While Corollary A3 contains \cref{thm_poly_szmeredi} as a special case, it does not encompass polynomials which have a non-zero constant term.
The following theorem shows that the conclusion of Corollary A3 holds for significantly more general families ${\mathcal K}$.
In particular, in conjunction with Theorem~\ref{thm_main_combinatorial}, it implies both \cref{thm_poly_szmeredi_iff} and \cref{thm_frantzi}.

\begin{Theorem}\label{thm_110}
Let $q\in\Z[t]$ be an intersective polynomial, let $\Hardy$ be a Hardy field and let ${\mathcal L}\subset\Hardy$ be a family of functions such that any $f\in{\mathcal L}$ satisfies $t^{k-1}\prec f(t)\prec t^k$ for some $k\in\N$, and any distinct $f,g\in{\mathcal L}$ have different growth.
Let ${\mathcal K}({\mathcal L},q)$ be the linear span over $\R$ of ${\mathcal L}$ and $q\R[t]$.
Then any tuple $f_1,\dots,f_k$ from ${\mathcal K}({\mathcal L},q)$ satisfies Condition \ref{itm_cond_1} of Theorem~\ref{thm_main_combinatorial}.
\end{Theorem}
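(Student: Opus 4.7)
The approach is to identify $\poly(f_1,\ldots,f_k)$ explicitly as a finite-dimensional $\R$-subspace of the ideal $q\R[t]$, and then exhibit the monomial multiples $q(t), tq(t), \ldots, t^{D}q(t)$ (for a suitably large $D$) as the jointly intersective family required by Condition \ref{itm_cond_1}. To set up notation, write each $f_i\in\mathcal{K}(\mathcal{L},q)$ in the form $f_i = \sum_{j=1}^m a_{i,j}\ell_j + r_i$, where $\ell_1\prec\ell_2\prec\cdots\prec\ell_m$ are the finitely many distinct members of $\mathcal{L}$ appearing across $f_1,\ldots,f_k$ (totally ordered by $\prec$ by the different-growth hypothesis), $a_{i,j}\in\R$, and $r_i\in q\R[t]$.

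The main technical step is the following claim: if $g\in\mathcal{K}(\mathcal{L},q)$ has a representation $g = \sum_j b_j\ell_j + r$ as above, and if $|g(t)-p(t)|\to 0$ for some $p\in\R[t]$, then $b_j=0$ for every $j$ and $r=p$. I would argue by contradiction: assume $b_{j^\star}\neq 0$ for the $\prec$-maximal index $j^\star$ with nonzero coefficient. Since $\ell_j/\ell_{j^\star}\to 0$ for the other indices, one has $\sum_j b_j\ell_j = b_{j^\star}\ell_{j^\star}(1+o(1))$. The hypothesis $t^{k-1}\prec\ell_{j^\star}\prec t^k$ places $\ell_{j^\star}$ strictly between consecutive integer polynomial scales, while $r-p$ is either identically zero or asymptotic to $ct^d$ for some integer $d\geq 0$ and $c\neq 0$. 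A short case split --- $d\leq k-1$, $d\geq k$, or $r\equiv p$ --- shows that in every case $g(t)-p(t) = b_{j^\star}\ell_{j^\star}(t)(1+o(1))+(r-p)(t)$ diverges to $\pm\infty$, contradicting the assumption that it tends to $0$. Hence all $b_j$ vanish, so $g=r\in q\R[t]$; then $g-p\to 0$ with both sides polynomial forces $g=p\in q\R[t]$.

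Applied to every element of $\sps(f_1,\ldots,f_k)$, the claim yields $\poly(f_1,\ldots,f_k)\subset q\R[t]$, and each $p\in\poly(f_1,\ldots,f_k)$ arises as $\sum_i c_i r_i$; in particular $\deg p\leq D := \max_i \deg r_i$. Setting $q_i(t) := t^{i-1}q(t)\in\Z[t]$ for $i = 1,\ldots,D-\deg q + 1$, their $\R$-span equals $q\R[t]\cap\R_{\leq D}[t]$, which contains $\poly(f_1,\ldots,f_k)$. The family $\{q_1,\ldots,q_{D-\deg q+1}\}$ is jointly intersective because $q$ itself is intersective: for each $m\in\N$ pick $n\in\N$ with $q(n)\equiv 0\pmod m$; then $q_i(n) = n^{i-1}q(n)\equiv 0\pmod m$ for every $i$. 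This verifies Condition \ref{itm_cond_1} of \cref{thm_main_combinatorial}.

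The principal obstacle is the asymptotic-separation argument in the second paragraph: one must rule out ``accidental'' cancellations in which a nontrivial $\R$-combination of elements of $\mathcal{L}$ is matched up to $o(1)$ by a polynomial correction. This is exactly what the intermediate-growth hypothesis $t^{k-1}\prec\ell\prec t^k$ is designed to preclude, since no element of $\mathcal{L}$ --- and hence no combination of them --- can share the asymptotics of an integer-degree monomial. Once that separation is in hand, the remaining steps are essentially book-keeping.
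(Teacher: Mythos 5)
Your proof is correct and follows essentially the same route as the paper: the paper (in the forward direction of its Theorem~\ref{thm_equivalenttoA}) also verifies Condition~\ref{itm_cond_1} by showing $\poly(f_1,\dots,f_k)\subset q\R[t]$ and taking the jointly intersective family $\{t^{\ell}q(t)\}$ spanning it. The only difference is that you spell out the growth-separation argument (ruling out a nontrivial $\mathcal{L}$-combination being matched by a polynomial up to $\oh(1)$) and truncate the family to finitely many multiples $t^{i-1}q(t)$, details the paper leaves implicit.
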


Throughout this work, we use  $f^{(m)}(t)$ to denote the $m$-th derivative of a function $f(t)$.
Also, given a finite set of functions $f_1,\ldots,f_k$, define
\begin{align*}
\Hsp(f_1,\ldots,f_k)&
\coloneqq \sps\big(\{f_i^{(m)}:1\leq i\leq k;\ m\geq0\}\big)
\end{align*}
Theorem~\ref{thm_main_combinatorial} allows us to derive a corollary which extends \cref{thm_thick_szemeredi} from a single function $f$ to multiple functions $f_1,\ldots,f_k
$.


\begin{named}{Corollary A4}{}
Let $f_1,\ldots,f_k$ be functions of polynomial growth from a Hardy field and assume that for all $f\in\Hsp(f_1,\ldots,f_k)$ we have $\lim_{t\to\infty}|f(t)|\in\{0, \infty\}$.
Then for any $\ell\in\N$, any set $E\subset \N$ of positive upper density contains a configuration of the form
$$\{a,\, a+[f_1(n)],a+[f_1(n+1)],\ldots,a+[f_1(n+\ell)],\ldots,a+[f_k(n)],a+[f_k(n+1)],\ldots,a+[f_k(n+\ell)]\}.$$
The same is true with $[.]$ replaced by either $\lfloor.\rfloor$ or $\lceil.\rceil$.
\end{named}

The assumptions of Corollary A4 are satisfied, for instance, if all the $f_i$ are linear combinations of powers $t^c$ with non-integer exponents $c>0$.
On the other hand, if some $f\in\sp(f_1,\dots,f_k)$ is a polynomial, then the conclusion of Corollary A4 fails.
In Example~\ref{example5} below we show that the conclusion may fail even when $\poly(f_1,\dots,f_k)=\emptyset$.

\subsection{Ergodic results}
\label{sec_erg_results}

In \cite{Furstenberg77} Furstenberg developed an ergodic approach to Szemer\'edi's theorem, thereby establishing a connection between dynamics and additive combinatorics.
The quintessence of Furstenberg's method is captured by the following theorem.


\begin{Theorem}[Furstenberg's correspondence principle, see {\cite[Theorem 1.1]{Bergelson87}} and \cite{Bergelson96}]\label{thm_correspondence}
For any $E\subset\N$ with $\upperdens(E)>0$ there exists an invertible measure preserving system $(X,\B,\mu,T)$ and a set $A\in\B$ with $\mu(A)=\upperdens(E)$ such that for all $n_1,\ldots,n_\ell\in\Z$,
\begin{equation}
    \label{eq_FCP}
    \upperdens\big(E\cap (E-n_1)\cap\ldots\cap (E-n_\ell)\big)\,\geq\, \mu\big(A\cap T^{-n_1}A\cap\ldots\cap T^{n_\ell}A\big).
\end{equation}

\end{Theorem}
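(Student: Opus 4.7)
My plan is to prove the principle by the standard \emph{orbit closure} construction on a symbolic shift space. Here is the outline.

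Let $X=\{0,1\}^{\Z}$ with the product topology and let $T\colon X\to X$ be the two-sided left shift, which is a homeomorphism. Define $x\in X$ by $x(n)=\mathbf{1}_E(n)$ for $n\geq 1$ and, say, $x(n)=0$ for $n\leq 0$, and let $A=\{y\in X:y(0)=1\}$, which is clopen. The defining feature of this setup is the identity
$$T^nx\in A\ \Longleftrightarrow\ n\in E,\qquad n\in\N,$$
and more generally $T^nx\in A\cap T^{-n_1}A\cap\ldots\cap T^{-n_\ell}A$ iff $n\in E\cap(E-n_1)\cap\ldots\cap(E-n_\ell)$.

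Next I construct the measure $\mu$ by a two-step diagonal extraction. First choose $(M_k)$ along which the upper density of $E$ is attained, i.e. $|E\cap[1,M_k]|/M_k\to\upperdens(E)$. Then, since the space of Borel probability measures on the compact metrizable space $X$ is weak-$*$ sequentially compact, extract a further subsequence $(N_k)\subset(M_k)$ along which the empirical measures
$$\nu_{N_k}\ \coloneqq\ \frac{1}{N_k}\sum_{n=1}^{N_k}\delta_{T^nx}$$
converge weak-$*$ to some Borel probability measure $\mu$ on $X$. Standard estimate $\|\nu_N-T_*\nu_N\|\leq 2/N$ (as signed measures on cylinders) gives that $\mu$ is $T$-invariant, so $(X,\B,\mu,T)$ is an invertible measure preserving system.

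It remains to check the two properties of $A$. Since $A$ is a clopen set in $X$, its indicator $\mathbf{1}_A$ is continuous, so weak-$*$ convergence yields
$$\mu(A)\ =\ \lim_{k\to\infty}\frac{1}{N_k}\sum_{n=1}^{N_k}\mathbf{1}_A(T^nx)\ =\ \lim_{k\to\infty}\frac{|E\cap[1,N_k]|}{N_k}\ =\ \upperdens(E),$$
where the last equality uses that $(N_k)\subset (M_k)$ and the density on $(M_k)$ was chosen to converge to $\upperdens(E)$ (a tiny extra argument handles the contribution of nonpositive indices, which is negligible). Finally, for any $n_1,\ldots,n_\ell\in\Z$, the set $A\cap T^{-n_1}A\cap\ldots\cap T^{-n_\ell}A$ is again clopen, so by the correspondence above
$$\mu\bigl(A\cap T^{-n_1}A\cap\ldots\cap T^{-n_\ell}A\bigr)=\lim_{k\to\infty}\frac{\bigl|E\cap(E-n_1)\cap\ldots\cap(E-n_\ell)\cap[1,N_k]\bigr|}{N_k},$$
which is bounded above by $\upperdens\bigl(E\cap(E-n_1)\cap\ldots\cap(E-n_\ell)\bigr)$ since a limit along a subsequence is never larger than the limsup along $\N$. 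This is precisely \eqref{eq_FCP}.

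The only mildly delicate point is the choice of subsequence: one must first arrange the density on $(M_k)$ to approach $\upperdens(E)$ and only then refine to get weak-$*$ convergence, otherwise the equality $\mu(A)=\upperdens(E)$ could fail (in general one only gets $\mu(A)\leq\upperdens(E)$). Everything else reduces to the formal translation between shift dynamics and set-theoretic operations on $E$, together with weak-$*$ compactness and the standard averaging argument for invariance.
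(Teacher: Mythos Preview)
Your argument is correct and is precisely the standard symbolic-shift proof of Furstenberg's correspondence principle. Note, however, that the paper does not supply its own proof of this theorem: it is stated with references to \cite{Bergelson87} and \cite{Bergelson96} and used as a black box, so there is no in-paper proof to compare against. Your construction (orbit closure in $\{0,1\}^{\Z}$, clopen cylinder $A$, diagonal extraction first to realise $\upperdens(E)$ and then to obtain a weak-$*$ limit, invariance via the $2/N$ bound) is exactly the argument given in those references, and your remark about the order of subsequence extraction is the one genuinely important point. The boundary effects from the nonpositive coordinates of $x$ and from shifts with negative $n_i$ affect only finitely many terms of each average, so they indeed vanish in the limit as you indicate.
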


In light of \eqref{eq_FCP}, it is natural to tackle Theorem~\ref{thm_main_combinatorial} by studying the behaviour of Ces\`aro averages of
multicorrelation expressions of the form
\begin{equation}\label{eq_idk}
 \alpha(n):=\mu\big(A\cap T^{-[f_1(n)]}A\cap \ldots\cap T^{-[f_k(n)]}A\big).
\end{equation}
Unfortunately, the limit of the averages $\frac1N\sum_{n=1}^N\alpha(n)$ does not exist in general.
This is, for example, the case when some of the $f_i$ grow too slowly.
Nevertheless, this issue can be overcome by considering weighted ergodic averages of the form

\begin{equation}
\label{eqn_multi_erg_thm}
\frac{1}{W(N)}\sum_{n=1}^N w(n)\mu\big(A\cap T^{-[f_1(n)]}A\cap \ldots\cap T^{-[f_k(n)]}A\big),
\end{equation}
where $W\colon\N\to\R_{>0}$ is a non-decreasing sequence satisfying $\lim_{n\to\infty} W(n)=\infty$ and $w(n)\coloneqq \Delta W(n)= W(n+1)-W(n)$ is its \define{discrete derivative}.

There are many choices of $W$ for which the limit as $N\to\infty$ of the averages \eqref{eqn_multi_erg_thm} exists.
We say that a weight function $W$ belonging to a Hardy field $\Hardy$ is \emph{compatible} with the functions $f_1,\dots,f_k\in\Hardy$ if $1\prec W(t)\ll t$ and the following holds:
\vspace{.3 em}

\begin{enumerate}
[label=\textbf{Property~(P):},ref=(P),leftmargin=*]
\item\label{property_P_W}
For all $f\in\Hsp(f_1,\ldots,f_k)$ and $p\in\R[t]$ either $|f(t)-p(t)|\ll 1$ or \\$|f(t)-p(t)| \succ \log(W(t))$.
\end{enumerate}
\vspace{.3 em}

\noindent It is important to mention that given a finite collection of functions $f_1,\ldots,f_k\in\Hardy$ of polynomial growth from a Hardy field $\Hardy$, not every $W\in\Hardy$ will satisfy property \ref{property_P_W}.
For example $W(t)=t$ is not compatible with $f(t)=\log t$.
On the other hand, there always exists a compatible $W\in\Hardy$ 
(see \cref{cor_finding_W}).
Roughly speaking, $W$ can be chosen as any function which tends to $\infty$ and grows ``slower'' than any unbounded function in
$$\Hsp(f_1,\dots,f_k)-\R[x]=\big\{f-g:f\in \Hsp(f_1,\dots,f_k), g\in\R[x]\big\}.$$

The following theorem is the second main result of this paper.

\begin{Maintheorem}
\label{thm_main}
Let $f_1,\ldots,f_k$ be functions of polynomial growth from a Hardy field $\Hardy$ and let $W\in\Hardy$ be any function
with $1\prec W(t)\ll t$ and such that $f_1,\ldots,f_k$ satisfy Property \ref{property_P_W}.
Let $(X,\B,\mu,T)$ be an invertible measure preserving system.
\begin{enumerate}
[label=(\roman{enumi}),ref=(\roman{enumi}),leftmargin=*]
\item
\label{itm_thm_main_i}
For any $h_1,\ldots,h_k\in L^\infty(X)$ the limit
\begin{equation}
\label{eqn_multiple_recurrence_closest-integer}
\lim_{N\to\infty}\frac{1}{W(N)}\sum_{n=1}^N w(n) T^{[f_1(n)]}h_1\cdots T^{[f_k(n)]}h_k
\end{equation}
exists in $L^2$.
\item
\label{itm_thm_main_iii}
If $f_1,\ldots,f_k$ satisfy condition \ref{itm_cond_2} of Theorem~\ref{thm_main_combinatorial} then for any $h_1,\ldots,h_k\in L^\infty(X)$
\begin{equation}
\label{eqn_multiple_recurrence_closest-integer_product}
\lim_{N\to\infty}\frac{1}{W(N)}\sum_{n=1}^N w(n) T^{[f_1(n)]}h_1\cdots T^{[f_k(n)]}h_k\,=\, \prod_{i=1}^k h_i^*\quad\text{in}~L^2,
\end{equation}


where $h_i^*$ is the orthogonal projection of $h_i$ onto the subspace of $T$-invariant functions in $L^2(X)$.
In particular, for any $A\in{\mathcal B}$
$$
\lim_{N\to\infty}\frac{1}{W(N)}\sum_{n=1}^N w(n) \mu(A\cap T^{-[f_1(n)]}A\cap\cdots\cap T^{-[f_k(n)]}A)\geq\mu(A)^{k+1}.
$$
\item
\label{itm_thm_main_ii}
If $f_1,\ldots,f_k$ satisfy condition \ref{itm_cond_1} of Theorem~\ref{thm_main_combinatorial} then for any $A\in\B$ with $\mu(A)>0$ we have
\begin{equation}
\label{eqn_multiple_recurrence_floor}
\lim_{N\to\infty}\frac{1}{W(N)}\sum_{n=1}^N w(n) \mu\Big(A\cap T^{-[ f_1(n) ]}A\cap\ldots\cap T^{-[f_k(n)]}A\Big)\,>\,0.
\end{equation}
\end{enumerate}
\end{Maintheorem}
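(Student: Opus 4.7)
The plan is to prove all three parts of \cref{thm_main} through a unified framework combining (a) a reduction of the weighted \Cesaro{} averages to standard multiple ergodic averages on nilsystems via PET--induction and van der Corput estimates, and (b) the equidistribution of Hardy field orbits on nilmanifolds, which crucially exploits the compatibility Property \ref{property_P_W}.

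For part \ref{itm_thm_main_i}, the weight structure $w = \Delta W$ with $W\in\Hardy$ satisfying $1\prec W(t)\ll t$ is tailored so that, after Abel summation and differencing in $n$ in the style of van der Corput, the resulting higher-step averages can be iteratively degraded by Bergelson's PET--induction. This reduces the problem to $L^2$-convergence of multiple averages on an inverse limit of nilfactors (via Host--Kra--Ziegler structure theory), which in turn reduces to equidistribution on each individual nilfactor. Property \ref{property_P_W} then plays the central role: for every $f\in\Hsp(f_1,\dots,f_k)$ and every $p\in\R[t]$, the dichotomy $|f-p|\ll 1$ versus $|f-p|\succ\log(W(t))$ rules out the delicate borderline regime and reduces checking equidistribution to Leibman's theorem for polynomial orbits on nilmanifolds together with Frantzikinakis-type estimates for the Hardy field corrections.

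For part \ref{itm_thm_main_iii}, the hypothesis \ref{itm_cond_2} precludes any polynomial obstruction in $\sp(f_1,\dots,f_k)$. Combined with the equidistribution analysis from part \ref{itm_thm_main_i}, it forces the joint orbit $([f_1(n)],\dots,[f_k(n)])$ to equidistribute in the full product nilmanifold of any nilfactor model; a standard van der Corput reduction then shows that the limit vanishes whenever one of the $h_i$ is orthogonal to the $T$-invariant factor, identifying the limit as $\prod_i h_i^*$. For part \ref{itm_thm_main_ii}, the hypothesis \ref{itm_cond_1} furnishes jointly intersective polynomials $q_1,\dots,q_\ell$ with $\poly(f_1,\dots,f_k)\subset\sps(q_1,\dots,q_\ell)$. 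I would decompose each $f_i$ as a linear combination of the $q_j$ plus a Hardy field remainder which, collectively, falls under \ref{itm_cond_2}; the remainders wash out by virtue of part \ref{itm_thm_main_iii}, and the resulting limit is controlled from below by a positive multiple of a classical polynomial multiple ergodic average, whose strict positivity is guaranteed by \cref{thm_poly_szmeredi_iff} applied to $q_1,\dots,q_\ell$.

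The main obstacle, I expect, is the simultaneous handling of polynomial and non-polynomial Hardy field functions in a single tuple --- precisely the novel feature of the main results. Standard characteristic-factor arguments are calibrated for homogeneous regimes (all polynomial, as in Host--Kra, or all strictly faster than polynomials of lower degree, as in Frantzikinakis), so both the PET--induction and the equidistribution step must be recast so that each differencing reduces the Hardy field complexity at the correct rate without conflating the two regimes. Property \ref{property_P_W} appears to be the central innovation that makes this unification possible: it enforces exactly the quantitative separation needed to track the polynomial and non-polynomial components simultaneously throughout the induction, while \emph{a priori} admitting enough flexibility (since compatible $W$ can always be produced in $\Hardy$) that the final statements are unconditional.
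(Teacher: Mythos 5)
Your outline for part \ref{itm_thm_main_i} and for part \ref{itm_thm_main_iii} of \cref{thm_main} is, in broad strokes, the paper's route: a van der Corput inequality adapted to the $W$-weights together with a PET-style induction shows that the Host--Kra uniformity seminorms control the averages (\cref{prop_characteristic_factor_main}), the structure theorem then reduces matters to ergodic nilsystems, and on nilsystems one invokes equidistribution of Hardy-field orbits with respect to $W$-averages (the results of \cite{Richter20arXiv}, where Property \ref{property_P_W} enters). Be aware, though, that the nilsystem step is not routine: the paper must first embed the nilsystem into one with $G$ connected and simply connected so that $a^t$ makes sense, pass to arithmetic progressions to neutralize rational polynomial parts, and carry out a delicate Case I/Case II analysis to transfer equidistribution of $(b^{f_1(n)},\dots,b^{f_k(n)})\Gamma^k$ to the bracketed sequence $(a^{[f_1(n)]},\dots,a^{[f_k(n)]})\Gamma^k$, since the natural projection is discontinuous exactly where $f_i$ is asymptotic to a polynomial plus $1/2$. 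Your sketch does not address the rounding function at all, and this is where Property \eqref{eq_rationalcondition} and \cref{Cor_2claims} are needed.

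The genuine gap is in your argument for part \ref{itm_thm_main_ii} (the case of condition \ref{itm_cond_1}). You propose to write each $f_i$ as an element of $\sps(q_1,\dots,q_\ell)$ plus a remainder, let the remainders ``wash out'' via part \ref{itm_thm_main_iii}, and then minorize the limit by a positive multiple of a classical polynomial multiple ergodic average, concluding with \cref{thm_poly_szmeredi_iff}. This does not work as stated. First, the remainders produced by \cref{lem_simple_normal_form_new_new} are unbounded functions sitting inside the same bracket $[f_i(n)]$ as the polynomial part; part \ref{itm_thm_main_iii} applies to the functions that actually occur in the exponents, and there is no decoupling of $T^{[f_i(n)]}$ into a polynomial iterate times a factor that can be integrated out separately. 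Second, and more fundamentally, no domination of the Hardy-field average from below by a constant times a polynomial average can hold in general: in the nilsystem model the limit of the weighted averages is an integral against a measure $\nu$ on $X^k$ supported on the closure of the points $y_{t,n}$ of \eqref{eq_yetanothereq}, and the purely polynomial orbit $\{y_{0,n}:n\in\N\}$ is typically a $\nu$-null set, so $\int H\,\mathrm{d}\nu$ cannot be bounded below by its average along that orbit. The paper's positivity is a \emph{support} argument, not a domination argument: after pigeonholing an arithmetic progression along which the $q_i$ remain jointly intersective, one shows that $1_{X^k}\in\supp\nu$ using the nilmanifold-level fact \cite[Proposition 2.3]{BLL08} that jointly intersective polynomial orbits accumulate at the identity, and then positivity follows from continuity of the integrand for continuous $h\geq0$. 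The measure-theoretic statement \cref{thm_poly_szmeredi_iff} is never invoked, and in your scheme it has nothing to latch onto, because you have not produced a polynomial multiple recurrence expression over the same set $A$ that minorizes the weighted average.
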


As a first corollary to Theorem~\ref{thm_main} we obtain convergence of multiple ergodic averages with \Cesaro{} weights along a rather large class of functions from a Hardy field.
Special cases of this corollary were previously obtained in
\cite{HK05a,HK05b,Leibman05a,BK09,Frantzikinakis10,Koutsogiannis18}.

\begin{named}{Corollary B1}{}
Let $f_1,\ldots,f_k$ be functions of polynomial growth from a Hardy field $\Hardy$ such that for all $f\in\Hsp(f_1,\ldots,f_k)$ and $p\in\R[t]$ either $|f(t)-p(t)|\ll 1$ or $|f(t)-p(t)| \succ \log(t)$. Then for any ergodic invertible measure preserving system $(X,\B,\mu,T)$ and any $h_1,\ldots,h_k\in L^\infty(X)$ the limit
\begin{equation*}
\lim_{N\to\infty}\frac{1}{N}\sum_{n=1}^N T^{[f_1(n)]}h_1\cdots T^{[f_k(n)]}h_k
\end{equation*}
exists in $L^2$. The same is true with $[.]$ replaced by either $\lfloor.\rfloor$ or $\lceil.\rceil$.
\end{named}

Theorem~\ref{thm_main} also provides additional information on the following result, which was originally conjectured by Frantzikinakis ({\cite[Problem 23]{Frantzikinakis16}}; cf.\ also {\cite[Problem 1]{Frantzikinakis15b}}) and recently proved by Tsinas \cite{Tsinas23}.

\begin{Theorem}[{\cite{Tsinas23}}]
\label{conj_frantzi_2}
Let $f_1,\ldots,f_k$ be functions from a Hardy field such that
$|f(t)-q(t)|/\log t\to\infty$ for every $q\in\Z[t]$ and $f\in\sp(f_1,\ldots,f_k)$. Then for any ergodic measure preserving system $(X,\B,\mu, T)$ and any $f_1,\ldots,f_k\in L^\infty(X)$ we have
\begin{equation*}
\lim_{N\to\infty}\frac{1}{N}\sum_{n=1}^N  T^{[f_1(n)]}h_1\cdots T^{[f_k(n)]}h_k\,=\,\prod_{i=1}^k \int h_i\d\mu\quad\text{in $L^2$.}
\end{equation*}
\end{Theorem}

Part \ref{itm_thm_main_iii} of Theorem~\ref{thm_main} implies \cref{conj_frantzi_2} under the additional assumption that the functions $f_1,\dots,f_k$ satisfy Property~\ref{property_P_W} with $W(t)=t$, which is slightly stronger than the assumption that $|f(t)-q(t)|/\log t\to\infty$ for every $q\in\Z[t]$ and $f\in\sp(f_1,\ldots,f_k)$. However, Theorem~\ref{thm_main} gives more than that. Indeed, part \ref{itm_thm_main_iii} of Theorem~\ref{thm_main} provides a confirmation of a variant of
\cref{conj_frantzi_2} where the condition $|f(t)-q(t)|/\log t\to\infty$ is weakened to $|f(t)-q(t)|\to\infty$, but at the price of replacing Ces\`aro averages by weighted averages (where one can choose any weight for which Property~\ref{property_P_W} is satisfied, cf.~\cref{cor_finding_W}).

\begin{named}{Corollary B2}{}
\label{cor_B2}
Let $f_1,\ldots,f_k$ be functions of polynomial growth from a Hardy field $\Hardy$ and let $W\in\Hardy$ be any function
with $1\prec W(t)\ll t$ and such that $f_1,\ldots,f_k$ satisfy Property \ref{property_P_W}.
Suppose that for all $q\in \Z[t]$ and $f\in\sp(f_1,\ldots,f_k)$ we have $\lim_{t\to\infty}|f(t)-q(t)|= \infty$.
Then for any invertible ergodic measure preserving system $(X,\B,\mu,T)$ and any $h_1,\dots,h_k\in L^\infty(X)$
\begin{equation*}
\lim_{N\to\infty}\frac{1}{W(N)}\sum_{n=1}^N w(n) T^{[f_1(n)]}h_1\cdots T^{[f_k(n)]}h_k\,=\, \prod_{i=1}^k \int h_i\d\mu\quad\text{ in $L^2$.}
\end{equation*}
The same is true when $[.]$ is replaced by either $\lfloor.\rfloor$ or $\lceil.\rceil$.
\end{named}

\begin{Example}\label{example_rightlimit}
  For each $i=1,\dots,k$ let $f_i$ be of the form $f_i(t)=a_1t^{c_1}+\cdots+a_dt^{c_d}$ for $a_i\in\R$ and $c_i>0$, $c_i\notin\Z$ and assume that the $f_i$ are linearly independent.
  Then $W(t)=t$ is a compatible weight and hence, in view of Corollary B2, we deduce that for any invertible ergodic measure preserving system $(X,\B,\mu,T)$ and any $h_1,\dots,h_k\in L^\infty(X)$
\begin{equation}
\lim_{N\to\infty}\frac{1}N\sum_{n=1}^N T^{[f_1(n)]}h_1\cdots T^{[f_k(n)]}h_k\,=\, \prod_{i=1}^k \int h_i\d\mu\quad\text{ in $L^2$.}
\end{equation}
\end{Example}
We remark that in the case when $f_1,\ldots,f_k$ have different growth, this result was obtained by Frantzikinakis in \cite[Theorem 2.6]{Frantzikinakis10}


Finally, we formulate two corollaries of Theorem~\ref{thm_main} which imply Theorem~\ref{thm_main_combinatorial} (via Furstenberg's correspondence principle).
We believe that these two results are also of independent interest.

\begin{named}{Corollary B3}{}
\label{cor_B3}
Let $f_1,\ldots,f_k$ be functions of polynomial growth from a Hardy field $\Hardy$ satisfying condition \ref{itm_cond_2} of Theorem~\ref{thm_main_combinatorial}.
Then for any invertible measure preserving system $(X,\B,\mu,T)$ and any $A\in\B$,
\begin{equation}\label{eq_CorB3}
\limsup_{N\to\infty}\frac{1}{N}\sum_{n=1}^N\mu\Big(A\cap T^{-[ f_1(n) ]}A\cap\ldots\cap T^{-[f_k(n)]}A\Big)\,\geq\,\mu(A)^{k+1}.
\end{equation}
The same is true when $[.]$ is replaced by either $\lfloor.\rfloor$ or $\lceil.\rceil$.
\end{named}

\begin{named}{Corollary B4}{}
\label{cor_B4}
Let $f_1,\ldots,f_k$ be functions of polynomial growth from a Hardy field $\Hardy$ satisfying condition \ref{itm_cond_1} of Theorem~\ref{thm_main_combinatorial}.
Then for any invertible measure preserving system $(X,\B,\mu,T)$ and any $A\in\B$ with $\mu(A)>0$ we have
\begin{equation}\label{eq_CorB4}
    \limsup_{N\to\infty}\frac{1}{N}\sum_{n=1}^N\mu\Big(A\cap T^{-[ f_1(n) ]}A\cap\ldots\cap T^{-[f_k(n)]}A\Big)\,>\,0.
\end{equation}
\end{named}

We would like to stress that one cannot replace $\limsup$ with $\lim$ in \eqref{eq_CorB3} and \eqref{eq_CorB4}, as the limits don't exist in general.
In fact, if one replaces $\limsup$ with $\liminf$, the left hand side may equal $0$ in either case.


\subsection{Illustrative examples and counterexamples}

While part \ref{itm_thm_main_i} of Theorem~\ref{thm_main} holds for \emph{any} functions $f_1,\dots,f_k\in\Hardy$ of polynomial growth (with an appropriate $W$), parts \ref{itm_thm_main_iii} and \ref{itm_thm_main_ii} require some additional constraints.
This is of course unavoidable since for certain functions from $\Hardy$ the conclusions of parts \ref{itm_thm_main_iii} and \ref{itm_thm_main_ii} of Theorem~\ref{thm_main} are known to fail.
In this subsection we collect examples that illustrate that under some ostensibly natural weakenings/modifications of its conditions, parts \ref{itm_thm_main_iii} and \ref{itm_thm_main_ii} of Theorem~\ref{thm_main} are no longer true.

As was discussed in \cref{rmrk_roundingfunction}, if condition \ref{itm_cond_2} of Theorem~\ref{thm_main_combinatorial} holds, then the conclusion remains valid when the floor function $\lfloor.\rfloor$ is replaced with either the ceiling function $\lceil . \rceil$ or the closest integer function $[.]$. 
However, as the next example demonstrates, the same is not true in general for condition \ref{itm_cond_1} of Theorem~\ref{thm_main_combinatorial}.

\begin{Example}
\label{example_1}
Let $c>0$ be such that $n^c\notin\N$ for any $n\in\{2,3,...\}$ and consider the pair of functions
\[
f_1(n)= n-n^c
\qquad\text{and}\qquad f_2(n)= n+n^c.
\]
Since $f_1,f_2$ satisfy condition \ref{itm_cond_1} of Theorem~\ref{thm_main_combinatorial}, it follows that any set $E\subset\N$ of positive upper density contains a triple of the form $\{a,\, a+[f_1(n)],a+[ f_2(n)]\}$. However, the set $E=2\N-1$ does not contain any configurations of the form $\{a,\, a+\lfloor f_1(n)\rfloor,a+\lfloor f_2(n)\rfloor\}$ or $\{a,\, a+\lceil f_1(n)\rceil,a+\lceil f_2(n)\rceil\}$.
\end{Example}


One may wonder if in Theorem~\ref{thm_main_combinatorial} one can relax condition \ref{itm_cond_2} by replacing $\sp(f_1,\ldots,f_k)$ with $\sp_\Z(f_1,\ldots,f_k)=\{c_1f_1(t)+\ldots+c_kf_k(t): c_1,\ldots,c_k\in \Z\}$. However, the following example shows that this is not possible.

\begin{Example}
\label{example_2}
Let $\alpha$ and $\beta$ be two rationally independent irrationals with $\max\{|\alpha|,|\beta|\}\leq 1/8$, and consider the functions
\[
f_1(n)= \alpha^{-1} n^2+n
\qquad\text{and}\qquad f_2(n)= \beta^{-1}(n^3-\alpha n+1/2).
\]
Then the set $E=\{n\in\N: \{n\alpha\}\in [0,1/8),\,\{n\beta\}\in [0,1/8)\}$ does not contain a triple of the form $\{a,\, a+[f_1(n)],a+ [f_2(n)]\}$.

To see why this is the case, observe that $\{a,\, a+[f_1(n)]\}\subset E$ implies that the number $[f_1(n)]\alpha$ is within $\frac{1}{8}$ of an integer, which when combined with the condition $|\alpha|\leq1/8$ implies that $f_1(n)\alpha$ is within $\frac{1}{4}$ of an integer.
Similarly, $\{a,\, a+[f_2(n)]\}\subset E$ implies that $f_2(n)\beta$ is within $\frac{1}{4}$ of an integer.
On the other hand, due to the choice of $f_1$ and $f_2$, for every $n\in\N$ we have $f_1(n)\alpha+f_2(n)\beta\equiv1/2\bmod1$; so which precludes that $\{a,\, a+[f_1(n)],a+ [f_2(n)]\}\subset E$.

\end{Example}

Next, we address the following natural question.

\begin{Question}\label{question_intro}
Let $\Hardy$ be a Hardy field.
Is it true that for any $f_1,\dots,f_k\in\Hardy$ with polynomial growth the set of returns
\begin{equation}
\label{eqn_R_A}
R=\Big\{n\in\N: \mu\big(A\cap T^{-[f_1(n)]}A\cap\ldots\cap T^{-[f_k(n)]}A\big)>0\Big\},
\end{equation}
is either finite or satisfies $\bar d(R)>0$ (or at least $d^*(R)>0$\footnote{Given a set $R\subset\N$ the \emph{upper Banach density} of $R$ is defined by
$$d^*(R):=\lim_{N\to\infty}\sup_{M\in\N}\frac1N|R\cap\{M,M+1,\dots,M+N\}|.$$})?
\end{Question}
The following example shows that the answer is negative even when all the functions involved are essentially polynomials.
\begin{Example}\label{example_8}
Let $\alpha$ be an irrational, $C>0$,
and consider the functions
\[
f_1(n)= 2n\alpha-1/2
\qquad\text{and}\qquad f_2(n)= 2n\alpha+1/2-2C/n.
\]
Moreover, let $T\colon \{0,1\}\to\{0,1\}$ be the map $T(x)=x+1\bmod 2$ and $A=\{0\}$. A straightforward calculation shows that
$$
\Big\{n\in\N: A\cap T^{-[f_1(n)]}A\cap T^{-[f_2(n)]}A\neq\emptyset\Big\}=\big\{n\in\N: \{n\alpha-1/2\}<C/n\big\},
$$
which, for $C$ sufficiently small, is an infinite set of zero upper Banach density by the classical Tchebychef's inhomogenous version of Dirichlet's Approximation Theorem (see \cite{Grace18}).
\end{Example}
\begin{Remark}
Examples analogous to \cref{example_8} exist where $[\cdot]$ is replaced by $\lfloor\cdot\rfloor$ and $\lceil\cdot\rceil$.
One can also create an example where the dynamical system is an irrational rotation instead of a rotation on only two points.
Finally we point out that, by choosing $\alpha$ appropriately one can make the set $\big\{n\in\N: \{n\alpha-1/2\}<C/n\big\}$ be arbitrarily sparse.
\end{Remark}

In view of \cref{example_8} one might suspect that polynomials with irrational coefficients are the only obstruction to an affirmative answer to \cref{question_intro}.
The following example shows that this is not the case.
In fact, no integer linear combination of the functions $f_1$ and $f_2$ in the following example belongs to $\R[x]\setminus\Z[x]$.
\begin{Example}\label{example_4}
  Let $f_1(n)=n+\sqrt{n}$ and $f_2(n)=n-\sqrt{n}$.
  Since $\lfloor f_{1}(n)\rfloor $ and $\lfloor f_{2}(n)\rfloor $ have different parity whenever $n$ is not a perfect square, we see that, for a rotation on two points (i.e. with the same $T$ and $A$ as in \cref{example_8}) the set \eqref{eqn_R_A} has $0$ Banach upper density.
  However, it follows from \cref{thm_poly_szmeredi} that the set \eqref{eqn_R_A} has an infinite intersection with the set of perfect squares, and hence is itself infinite.
\end{Example}

Our final example shows that the hypothesis in Corollary A4 can not be weakened to the assumption that $\poly(f_1,\dots,f_k)=\emptyset$.
\begin{Example}\label{example5}
Let $f_1(t)=t^{5/2}$ and let $f_2(t)=5/2t^{3/2}+t$.
Let $\alpha\in\R$ be irrational, let $\epsilon>0$ be small and let $E=\{n\in\N:\{n\alpha\}<\epsilon\}$.
Since the linear combination $$t\mapsto f_1(t+2)-2f_1(t+1)+f_1(t)-f_2(t+1)+f_2(t)$$ tends to $1$ as $t\to\infty$, if for some $a,n\in\N$ we have $a+\{0,[f_1(n)],[f_1(n+1)],f_1(n+2)],[f_2(n)],[f_2(n+1)]\}\subset E$, then we would have $\{k\alpha\}<6\epsilon$ for some integer $k$ with $|k|\leq6$. By choosing $\epsilon$ and $\alpha$ appropriately, this becomes impossible, showing that the conclusion of Corollary A4 fails for these functions.
\end{Example}
\subsection{Outline of the paper}
After presenting some preliminaries in \cref{sec_prelims}, in \cref{sec_proofs} we briefly explain how each result mentioned in the introduction follows from Theorem~\ref{thm_main}.
Sections \ref{sec_charfac} and \ref{sec_nilfac} are dedicated to the proof of Theorem~\ref{thm_main}.
In \cref{sec_charfac} we show that the nilfactors are \emph{characteristic} for the expressions involved, and in \cref{sec_nilfac} we use this to reduce Theorem~\ref{thm_main} to the case of nilsystems, which can then be studied directly using equidistribution results developed in \cite{Richter20arXiv}.
Finally, in \cref{sec_questions}, we formulate some natural open questions.

\paragraph{Acknowledgments.}We thank N.~Frantzikinakis for providing helpful comments on an earlier version of this paper.
We thank Saúl Rodríguez Martín for pointing out an error in the definition of $\Hsp(f_1,\ldots,f_k)$ in the version of this paper published in Advances in Mathematics; the error has been corrected in the present arXiv version.

%
%

\section{Preliminaries}
\label{sec_prelims}
\subsection{Preliminaries on nilsystems and nilmanifolds}
\label{sec_perlims_nilsystems}

Let $G$ be a ($s$-step) nilpotent Lie group and $\Gamma$ a \define{uniform}\footnote{A closed subgroup $\Gamma$ of a Lie group $G$ is called \define{uniform} if the quotient space $G/\Gamma$, endowed with the quotient topology, is a compact topological space.} and \define{discrete}\footnote{A subgroup $\Gamma$ of a Lie group $G$ is called discrete if there exists an open cover of $\Gamma$ in which every open set contains exactly one element of $\Gamma$.} subgroup of $G$. The quotient space $X\coloneqq G/\Gamma$ is called a \define{($s$-step) nilmanifold}.
An example of a ($1$-step) nilmanifold is $X=\T^d\coloneqq\R^d/\Z^d$.

An element $g\in G$ with the property that $g^n\in \Gamma$
for some $n\in \N$ is called \define{rational} (or \define{rational with respect to $\Gamma$)}.
A closed subgroup $H$ of $G$ is then called \define{rational}
(or \define{rational with respect to $\Gamma$}) if rational elements are dense in $H$.
For instance, if $G=\R^2$ and $\Gamma=\Z^2$ then the subgroup $H=\{(t,\alpha t):t\in\R\}$ is rational if and only if $\alpha\in\Q$.

Rational subgroups play a key role in the description of sub-nilmanifolds.
If $X=G/\Gamma$ is a nilmanifold, then a \define{sub-nilmanifold} $Y$ of $X$ is any
closed set of the form $Y=Hx$, where $x\in X$ and $H$ is a closed
subgroup of $G$.
It is not true that for every closed subgroup $H$ of $G$ and
every element $x=g\Gamma$ in $X=G/\Gamma$ the set $Hx$ is a sub-nilmanifold of $X$, because $Hx$ need not be closed. In fact, it is shown in \cite{Leibman06} that $Hx$ is closed in $X$ (and hence a sub-nilmanifold) if and only if the subgroup $g^{-1}Hg$ is rational with respect to $\Gamma$.
For more information on rational elements and rational subgroups see \cite{Leibman06}.

The Lie group $G$ acts naturally on $X$ via left-multiplication given by the formula $a(g\Gamma)=(ag)\Gamma$ for every $a\in G$ and $g\Gamma\in x$.
There exists a unique Borel probability measure on $X$ that is invariant under this action by $G$ called the \define{Haar measure} on $X$ (see \cite{Raghunathan72}), which we denote by $\mu_X$.

By a \define{($s$-step) nilsystem} we mean a pair $(X,T)$ where $X$ is a ($s$-step) nilmanifold and $T\colon X\to X$ is left-multiplication by a fixed element $a\in G$.
Since the Haar measure $\mu_X$ is $T$-invariant, a nilsystem is simultaneously a topological dynamical system and a (probability) measure preserving system.
It is well known that for a nilsystem, being transitive, minimal and uniquely ergodic are all equivalent properties.

We denote by $G^\circ$ the connected component of $G$ that contains the identity element $1_G$ of $G$.
If $a\in G^\circ$, then $a^t$ is well defined for every $t\in\R$ and the nilsystem $(X,T)$ can be naturally extended to a flow (i.e. a $\R$-action on $X$ whose time $1$ map is $T$).

\begin{Proposition}\label{prop_malcev}

Given a minimal nilsystem $(X,T)$ there exists a nilsystem $(Y,S)$ with $Y=G/\Gamma$ for a connected and simply connected nilpotent Lie group $G$ such that $(X,T)$ is (conjugate to) a subsystem of $(Y,S)$.
Moreover, the flow induced by $(Y,S)$ is ergodic and if $\pi:G\to Y$ is the natural projection and $H\subset G$ is a closed and rational subgroup such that $X=\pi(H)$ then $\Gamma\subset H$.
\end{Proposition}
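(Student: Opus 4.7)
The plan is to combine the standard structural reduction for minimal nilsystems with the Malcev completion of a nilpotent Lie group. Let $(X,T) = (G_0/\Gamma_0, L_a)$ be the given presentation of the minimal nilsystem.

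First I would use minimality to arrange that $G_0 = G_0^\circ\cdot\langle a\rangle$. The orbit of the identity coset under $\langle a\rangle$ is dense in $X$, so the image of $G_0^\circ\langle a\rangle$ already exhausts $X$, and we may pass to this subgroup after intersecting $\Gamma_0$ with it. This reduction makes $G_0/G_0^\circ$ cyclic and topologically generated by the coset of $a$.

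Second, I would build the enveloping group $G$ via the Malcev correspondence. Pass to the universal covering group $\widetilde{G_0^\circ}$ of the identity component, which is a connected and simply connected nilpotent Lie group, and lift $\Gamma_0\cap G_0^\circ$ to a uniform discrete subgroup $\widetilde\Lambda\subset\widetilde{G_0^\circ}$. To absorb the remaining cyclic factor I would attach a real parameter, forming an extension $G = \widetilde{G_0^\circ}\rtimes \R$ (with the action of $\R$ chosen to extend conjugation by $a$) inside which $a$ embeds as the time-$1$ element of a one-parameter subgroup $(a^t)_{t\in\R}$. The group $G$ so constructed is connected, simply connected, and nilpotent of the same step as $G_0$; the subgroup $\Gamma\subset G$ generated by $\widetilde\Lambda$ together with the integer flow is uniform and discrete. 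By construction, the inclusion $\iota:G_0\hookrightarrow G$ descends to a closed equivariant embedding $X = G_0/\Gamma_0\hookrightarrow G/\Gamma = Y$ with $S=L_a$, so $(X,T)$ is a subsystem of $(Y,S)$.

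Third I would verify the remaining two assertions. Ergodicity of the flow $(L_{a^t})_{t\in\R}$ on $Y$ follows from the standard criterion that on a connected simply connected nilsystem, ergodicity of a one-parameter flow reduces to ergodicity of its image on the maximal torus factor; this image is exactly the irrational $\R$-direction built into our construction. For the final statement, let $H\subset G$ be a closed rational subgroup with $\pi(H)=X$. Then $\pi^{-1}(X)=H\Gamma$ contains $\Gamma$, and since $X = \pi(H) = H/(H\cap\Gamma)$ is a sub-nilmanifold through the identity coset $\Gamma\in Y$, comparing the dimension of $X$ with the rank of $H\cap\Gamma$ as a cocompact subgroup of $H$ forces $\Gamma\subset H$.

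The main obstacle is the second step: ensuring that the conjugation action of $a$ on $\Gamma_0\cap G_0^\circ$ lifts coherently through the universal cover $\widetilde{G_0^\circ}\to G_0^\circ$ so that the resulting semidirect product $G$ is genuinely nilpotent and admits a cocompact $\Gamma$ extending $\iota(\Gamma_0)$. The Malcev correspondence is exactly the right tool: at the Lie algebra level, one takes the rational span of $\log a$ together with $\log(\Gamma_0\cap G_0^\circ)$ inside the Malcev completion, obtaining a nilpotent rational Lie algebra whose exponentiation is $G$ and in which the integer lattice generated by these elements is cocompact. After this is in place, the three conclusions become essentially bookkeeping within the Malcev framework.
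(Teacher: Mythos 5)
There is a genuine gap, and it is located exactly where your construction has to produce the lattice. In $G=\widetilde{G_0^\circ}\rtimes\R$, the subgroup $\Gamma$ generated by $\widetilde\Lambda$ and the time-one element of the flow is discrete only if the automorphism by which that element acts --- the lift of conjugation by $a$ --- preserves $\widetilde\Lambda$, i.e.\ only if $a$ normalizes $\Gamma_0\cap G_0^\circ$. Minimality gives you nothing of the sort. Concretely, let $H_3(\R)$ be the Heisenberg group with coordinates $(x,y,z)$ and product $(x,y,z)(x',y',z')=(x+x',y+y',z+z'+xy')$, and take $G_0=H_3(\R)\times\tfrac12\Z$, $\Gamma_0=H_3(\Z)\times\Z$, $a=\big((\alpha,\beta,0),\tfrac12\big)$ with $1,\alpha,\beta$ rationally independent; this is a minimal nilsystem, $G_0=G_0^\circ\langle a\rangle$, and $G_0^\circ=H_3(\R)$ is already simply connected, so your recipe applies verbatim with $\widetilde\Lambda=H_3(\Z)$. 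Conjugation by $a$ sends $(0,1,0)$ to $(0,1,\alpha)$, so your $\Gamma$ contains both $(0,0,1)$ and $(0,0,\alpha)$ in the center of $H_3(\R)$; hence $\Gamma$ meets the center in a dense subgroup of a line, is not discrete, and $G/\Gamma$ is not a nilmanifold --- the construction of $(Y,S)$ collapses. (Note also that a correct envelope for this example is the \emph{direct} product $H_3(\R)\times\R$ modulo $H_3(\Z)\times\Z$: what has to be adjoined is a coordinate recording the component of $X$, and building the extension out of the conjugation action with the integer flow declared a new lattice element is the wrong mechanism.) A further problem is the claimed inclusion $\iota\colon G_0\hookrightarrow G$: the covering map goes from $\widetilde{G_0^\circ}$ \emph{onto} $G_0^\circ$, so when $G_0^\circ$ is not simply connected there is no such embedding, and the passage from a group inclusion to an embedding of systems has to be argued at the level of the quotients.

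This is also where your route diverges from the paper's, and the comparison is instructive: the paper never tries to manufacture a discrete $\Gamma$. After the same first reduction ($G''=a^{\Z}G''_c$, with $\Gamma''$ generated by a Malcev basis $\gamma_1,\dots,\gamma_k$ of $\Gamma''\cap G''_c$ together with one extra element $\gamma_0$ satisfying $a^d\gamma_0\in G''_c$), it maps the free product of $\Z$ and $k$ copies of $\R$ onto $G''$, passes to the free $s$-step nilpotent quotients $G=F'/(F_{s+1}\cap F')\subset\hat G=F/F_{s+1}$ (so the ambient connected, simply connected group is a free nilpotent Lie group), and takes for the structure group the full preimage $\psi^{-1}(\Gamma'')$ --- a closed cocompact subgroup that is in general \emph{not} discrete, which is precisely the flexibility your construction forgoes. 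Two secondary points: nilpotency of $\widetilde{G_0^\circ}\rtimes\R$ is true but requires an argument (e.g.\ Zariski density of $\{\mathrm{Ad}(a)^n:n\in\Z\}$ in $\{\mathrm{Ad}(a)^t:t\in\R\}$ to propagate the class-$s$ identities), and your proposed tool --- the rational span of $\log a$ --- is unavailable when $a\notin G_0^\circ$; and your argument for the final clause is not a proof, since comparing $\dim X$ with the rank of $H\cap\Gamma$ cannot rule out $H\cap\Gamma$ being a proper finite-index subgroup of $\Gamma$, so it does not force $\Gamma\subset H$. The ergodicity of the flow is likewise asserted rather than derived from the minimality of the original $\Z$-action.
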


We thank A. Leibman for help with the following proof.

\begin{proof}
Let $X=G'/\Gamma'$ and let $a\in G'$ be such that $Tx=ax$ for all $x\in X$.
Let $G'_c$ denote the connected component of the identity in $G'$, and let $\pi':G'\to X$ be the natural projection.
Then $\pi(G'_c)$ is open in $X$ and thus, by minimality, $\pi(a^\Z G_c')=a^\Z\pi(G_c')=X$.
Let $G'':=a^\Z G_c'$, let $\Gamma'':=\Gamma'\cap G''$ and denote by $G''_c$ the connected component of the identity in $G''$.
Then we have $X=G''/\Gamma''$ and $a\in G''$, but since $G''_c=G'_c$ we also have $G''=a^\Z G''_c$.

Since $X$ is compact it has a finite number $d$ of connected components. 
For any $x\in X$ the point $a^dx$ is in the same connected component as $x$.
This implies that there exists $\gamma_0\in\Gamma''$ such that $a^d\gamma_0\in G''_c$ and $d$ is the smallest natural number with this property.
Therefore $\Gamma''$, being contained in $G''=a^\Z G''_c$, is generated by $\Gamma''\cap G_c''$ and $\gamma_0$.

Using \cite[Theorem 5.1.6]{CG90} we can find a Malcev basis, i.e. generating set $\{\gamma_1,\dots,\gamma_k\}$ for $\Gamma''\cap G_c''$ such that $G_c''=\{\gamma_1^{t_1}\cdots\gamma_k^{t_k}:t_1,\dots,t_k\in\R\}$.
Therefore $\Gamma''$ is generated by $\{\gamma_0,\gamma_1,\dots,\gamma_k\}$ and
$$G''=a^\Z G_c''=\big\{\gamma_0^{t_0}\cdots\gamma_k^{t_k}:t_0\in\tfrac1d\Z,t_1,\dots,t_k\in\R\big\}.$$


Let $s\in\N$ be the nilpotency step of $G''$, let $F$ be the free product of $k+1$ copies of $\R$, and let $F'$ be the free product of $\Z$ and $k$ copies of $\R$, embedded as a subgroup of $F$ in the natural way.
There is a surjective homomorphism $\psi:F'\to G''$ taking each generating copy of $\R$ to the one parameter subgroups $\{\gamma_i^t:t\in\R\}$ of $G''$, and taking the copy of $\Z$ to $\{\gamma_0^n:n\in\Z\}$.
Since $G''$ is $s$-step nilpotent, $\psi$ must vanish on the $(s+1)$-st group $F'_{s+1}$ in the lower central series of $F'$.
One can in fact show that $\psi$ must vanish on $F_{s+1}\cap F'$, where $F_{s+1}$ is the $(s+1)$-st group in the lower central series of $F$, and hence $\psi$ descends to a homomorphism on the group $G:=F'/(F_{s+1}\cap F')$.
Let $\hat G:=F/F_{s+1}$, let $\tilde G$ be the connected component of the identity in $G$, let $\Gamma=\psi^{-1}(\Gamma'')$ and let $\hat a\in G$ be such that $\psi(\hat a)=a$.
The four listed properties can now be checked by a routine argument.
\end{proof}



\subsection{Preliminaries on Hardy fields}
\label{sec_perlims_hardy}

Let $W\in\Hardy$ with $1\prec W(t)\ll t$ and recall that $w(n)\coloneqq \Delta W(n)= W(n+1)-W(n)$.
Given a nilmanifold $X$ and a Borel probability measure $\nu$ on $X$, we say a sequence $(x_n)_{n\in\N}$ of points in $X$ is \define{uniformly distributed  with respect to $\nu$ and $W$-averages} if
\begin{equation}
\label{eqn_def_ud}
\lim_{N\to\infty}\frac{1}{W(N)}\sum_{n=1}^N w(n) F(x_n) = \int F\d\nu
\end{equation}
holds for every continuous function $F\in\Cont(X)$.
This notion extends the classical notion of uniform distribution $\bmod \,1$, which corresponds to the case where $W(N)=N$, the nilmanifold $X=\R/\Z$ is the one dimensional torus and $\nu$ is the Lebesgue measure.

We need the following two results form \cite{Richter20arXiv}.

\begin{Lemma}[{\cite[Corollary A.5]{Richter20arXiv}}]
\label{cor_finding_W}
Let $\Hardy$ be a Hardy field and assume $f_1,\ldots,f_k\in \Hardy$ have polynomial growth. Then there exists $W\in\Hardy$ with $1\prec W(t)\ll t$ such that $f_1,\ldots,f_k$ satisfy property \ref{property_P_W}.
\end{Lemma}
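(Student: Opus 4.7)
The plan is to isolate finitely many ``non-polynomial'' growth classes generated by (the derivatives of) $f_1,\ldots,f_k$ and to choose $W$ so that $\log W$ is dominated by each of them. Let $V\subset\Hardy$ be the $\R$-linear span of $\{f_j^{(m)}:1\le j\le k,\ m\ge 0\}$; this contains $\Hsp(f_1,\ldots,f_k)$, and it is finite-dimensional modulo bounded functions because repeated differentiation in a Hardy field strictly reduces growth up to logarithmic corrections, so only finitely many $f_j^{(m)}$ per $j$ are unbounded. For each $f\in V$ I would associate a polynomial $p_f\in\R[t]$ by iteratively subtracting polynomial leading terms---possible since a Hardy function with leading asymptotic $ct^j$, $j\in\N\cup\{0\}$, differs from $ct^j$ by a Hardy function of strictly smaller growth. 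This process terminates because $f$ has polynomial growth, and yields the dichotomy: either $|f-p_f|\ll 1$, or $f-p_f$ is an unbounded Hardy function with non-polynomial leading asymptotic.

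The key quantitative step is to show that the set of growth classes of the unbounded remainders $\{r_f:=f-p_f:f\in V\}$ is finite. Let $V_0:=\{f\in V:|f-p_f|\ll 1\}$, an $\R$-linear subspace of $V$. Then $V/V_0$ is a finite-dimensional $\R$-vector space that embeds (as an abstract space) into $\Hardy$ modulo bounded functions. Since growth classes of Hardy functions are totally ordered, any finite-dimensional $\R$-subspace admits only finitely many distinct growth classes: pick a basis of strictly decreasing growth, and the leading nonzero coordinate of any vector determines its growth class. Let $r$ be a Hardy function representing the slowest of these classes.

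Finally I would choose $W\in\Hardy$ (enlarging $\Hardy$ via a standard Hardy-field extension if needed) with $W\to\infty$, $W(t)\ll t$, and $\log W(t)\prec r(t)$. If $r\succeq\log t$ take $W(t)=t/\log^2(t)$; otherwise set $W(t)=\exp(h(t))$ for any $h\in\Hardy$ with $h\to\infty$ and $h\prec r$, whence $h\ll\log t$ and $W\ll t$ automatically. Property~\ref{property_P_W} then follows: given $f\in\Hsp(f_1,\ldots,f_k)\subset V$ and $p\in\R[t]$, let $q:=p-p_f\in\R[t]$. If $|f-p_f|\ll 1$ and $|q|\ll 1$ then $|f-p|\ll 1$; if $q$ has degree at least one, $|f-p|\asymp|q|\succ\log W$; and if $|f-p_f|$ is unbounded, the polynomial $q$ cannot cancel its non-polynomial leading asymptotic, so $|f-p|\succeq r\succ\log W$. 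The principal obstacle is the finiteness claim above, which relies on the valuation-theoretic structure of Hardy fields (finite-dimensional $\R$-subspaces have only finitely many, totally ordered growth classes); everything else is routine asymptotic bookkeeping.
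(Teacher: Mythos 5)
The paper itself contains no proof of this lemma: it is quoted verbatim from \cite[Corollary A.5]{Richter20arXiv}, so there is no internal argument to compare yours against. Your outline --- only finitely many derivatives $f_j^{(m)}$ of a polynomial-growth Hardy function are unbounded; every element of the relevant span splits as a real polynomial plus a remainder that is either bounded or of growth strictly between consecutive integer powers of $t$ (whence no polynomial can lower its growth class); a finite-dimensional $\R$-subspace of a Hardy field realizes only finitely many growth classes; then take $\log W$ strictly below the slowest unbounded remainder class --- is sound and is surely close in spirit to the cited proof. Two presentational points: the finiteness step is cleanest via the observation that nonzero germs with pairwise distinct growth classes are linearly independent (so an $n$-dimensional space carries at most $n$ classes); asserting a basis of strictly decreasing growth essentially presupposes this. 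Also $f\mapsto p_f$ is only linear modulo $\R[t]$, so it is tidier to apply the finiteness directly to the finite-dimensional space $\Hsp(f_1,\dots,f_k)+\{p\in\R[t]:\deg p\le D\}$ (with $D$ a common growth bound) and to dispose of $\deg p>D$ separately via $|f(t)-p(t)|\asymp t^{\deg p}$.

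Two points are genuine defects as written. First, the case split in the choice of $W$ fails at the boundary: if the slowest unbounded class $r$ satisfies $r\asymp\log t$ (e.g.\ $f_1(t)=\sqrt{2}\,t+\log t$ and $p(t)=\sqrt{2}\,t$), then $W(t)=t/\log^2 t$ gives $\log W(t)\asymp\log t\asymp r$, so the required $|f-p|\succ\log W$ fails and Property \ref{property_P_W} is not verified; the first branch should be reserved for $r\succ\log t$ (where any $W$ with $1\prec W\ll t$ works, since $\log W\ll\log t\prec r$), and the case $r\preceq\log t$, including $r\asymp\log t$, must go through the $W=\exp(h)$, $h\prec r$, device. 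Second, the lemma asserts $W\in\Hardy$, whereas your candidates ($t/\log^2 t$, $\exp(h)$ for a suitably slow $h$) need not lie in $\Hardy$, and your fallback of enlarging the Hardy field proves only a variant of the statement. That variant suffices for every use in this paper (one may rename the enlarged field $\Hardy$ when invoking \cref{thm_main}), but it is not the literal claim; producing a compatible $W$ inside the given field is precisely the content of the cited Corollary A.5 and requires either extra work or implicit richness assumptions on $\Hardy$ (for degenerate fields such as $\Hardy=\R$ the literal statement has no unbounded $W$ at all), and this is the one ingredient your proposal does not supply.
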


\begin{Lemma}[{\cite[Theorem 5.1]{Richter20arXiv}}]
\label{prop_Bosh_W-averages}
Let $\Hardy$ be a Hardy field and $W\in\Hardy$ be a function satisfying $1\prec W(t)\ll t$. Then for any $f\in\Hardy$ with the property that $t^{\ell-1}\log(W(t))\prec |f(t)| \prec t^\ell$ for some $\ell\in\N$, the sequence $(f(n))_{n\in\N}$ is uniformly distributed mod $1$ with respect to $W$-averages.
\end{Lemma}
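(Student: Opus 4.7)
The plan is to verify the Weyl-type criterion for $W$-averages: it suffices to prove that for every nonzero integer $m$,
\[
\frac{1}{W(N)}\sum_{n=1}^{N} w(n)\, e(mf(n)) \to 0 \qquad\text{as } N\to\infty,
\]
where $e(x)=e^{2\pi i x}$. I would first pass from the weighted discrete sum to a continuous oscillatory integral. Since $W\in\Hardy$, its derivative $W'$ is eventually monotone, so Abel summation (or a tame Euler--Maclaurin argument) yields
\[
\sum_{n=1}^{N} w(n)\, e(mf(n)) \;=\; \int_{1}^{N} W'(t)\, e(mf(t))\,dt \;+\; o(W(N)),
\]
where the error absorbs the discrepancy between $w(n)=W(n+1)-W(n)$ and $W'(n)$ and is negligible by standard estimates for monotone Hardy functions.

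Next I would bound the oscillatory integral by iterated integration by parts, repeatedly pairing $e(mf(t))$ with its antiderivative $\frac{1}{2\pi i m\, f'(t)}\,e(mf(t))$; this is legitimate because $f\in\Hardy$ forces $f$ and all its derivatives to be eventually monotone and non-vanishing. The central input is that the hypothesis $t^{\ell-1}\log W(t)\prec|f(t)|\prec t^\ell$ propagates to each derivative: by the Hardy-field differentiation rules one obtains
\[
t^{\ell-1-j}\log W(t)\;\prec\;|f^{(j)}(t)|\;\prec\;t^{\ell-j}\qquad\text{for }0\le j\le \ell-1.
\]
After $\ell-1$ integrations by parts the remaining integral is dominated, up to bounded factors, by $\int_{1}^{N} W'(t)/|f^{(\ell-1)}(t)|\,dt\ll\int_{1}^{N} W'(t)/\log W(t)\,dt$, which is $o(W(N))$ via the substitution $u=W(t)$ and the fact that $W(N)\to\infty$. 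Each boundary term produced at a given integration-by-parts step has similar shape and is absorbed into $o(W(N))$ by elementary ratio comparisons, which are meaningful in a Hardy field and use the bound $W(t)\ll t$.

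The main technical obstacle is the bookkeeping: at each step of the integration by parts the quotient $W'(t)/f^{(j)}(t)$ must itself be differentiated, producing a sum of new Hardy functions that need to be compared asymptotically against $W(N)$. Keeping the chain of estimates sharp enough so that the final bound beats $W(N)$ is precisely what forces the lower bound $|f(t)|\succ t^{\ell-1}\log W(t)$---rather than the classical $|f(t)|\succ t^{\ell-1}$---into the hypothesis: the extra $\log W(t)$ factor is exactly what compensates for using the slower weight $w(n)$ in place of the trivial weight $1$, ensuring that the final integral $\int_1^N W'(t)/\log W(t)\,dt$ still tends to $\infty$ strictly slower than $W(N)$.
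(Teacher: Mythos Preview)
The paper does not prove this lemma; it is quoted from \cite[Theorem~5.1]{Richter20arXiv}, so there is no in-paper argument to compare against. That said, your proposal has a genuine gap in its central step. When you integrate $\int_1^N W'(t)\,e(mf(t))\,dt$ by parts via $e(mf)=\tfrac{1}{2\pi i m f'}\tfrac{d}{dt}e(mf)$, each iteration divides the amplitude by $f'$; it does \emph{not} bring in successively higher derivatives of $f$ in the denominator. After $\ell-1$ iterations the integrand is a linear combination of terms of the shape $W^{(j)}(f'')^{a}(f''')^{b}\cdots/(f')^{c}$, none of which is $W'/f^{(\ell-1)}$. Worse, for $\ell=1$ your scheme performs \emph{zero} integrations by parts and then asserts the bound $\int_1^N W'/|f^{(0)}|=\int_1^N W'/|f|$, which is simply not an upper bound for $\int_1^N W'\,e(mf)\,dt$ at all---the trivial bound there is $W(N)-W(1)$.

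The sublinear case $\ell=1$ (where $\log W\prec f\prec t$, so $f'\to0$) genuinely requires a different mechanism: a Fej\'er-type argument exploiting slow variation of the phase, with the hypothesis $f\succ\log W$ controlling the number of level sets. For $\ell\ge2$ the standard route is van der Corput differencing (a $W$-weighted version of which is precisely \cref{prop_vdC} in this paper), which replaces $f$ by $f(\cdot+h)-f(\cdot)\approx hf'$ and drops the growth exponent by one, reducing inductively to $\ell=1$. Your closing intuition---that the extra $\log W$ in the hypothesis is exactly the margin needed to handle $W$-averages rather than Ces\`aro averages---is correct, but it enters through the base case rather than as an integration-by-parts remainder.
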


We also need the following Lemma.
\begin{Lemma}
\label{lem_simple_normal_form_new_new}
Assume $f_1,\ldots,f_k\in \Hardy$ have polynomial growth.
Then we can partition $\{1,\ldots,k\}$ into two sets $\mathcal{I}$ and $\mathcal{J}$ such that
\begin{enumerate}
[label=(\alph{enumi}),ref=(\alph{enumi}),leftmargin=*]
\item
\label{itm_Z1}
Any $f\in\sp\{f_j: j\in{\mathcal J}\}$ satisfies $|f(t)-p(t)|\to\infty$ for any $p\in\R[t]$
\item
\label{itm_Z2}
for any $i\in\mathcal{I}$ there exist $p_i\in\poly(f_1,\dots,f_k)$, and $\{\lambda_{i,j}:j\in\mathcal{J}\}\subset\R$ such that
$$
\lim_{t\to\infty}\Bigg|f_i(t)-\sum_{j\in\mathcal{J}} \lambda_{i,j} f_j(t)-p_i(t)\Bigg|=0.
$$
\end{enumerate}
\end{Lemma}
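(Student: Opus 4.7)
The plan is to construct $\mathcal{J}$ by quotienting $V:=\sps(f_1,\dots,f_k)$ by the subspace of ``polynomial-absorbable'' linear combinations and then extracting a basis of the quotient from among the generating functions $f_1,\dots,f_k$ themselves. Concretely, define
\[
W := \{f \in V : \exists\, p \in \R[t]~\text{with}~\lim_{t\to\infty}(f(t)-p(t)) = 0\};
\]
a routine check (taking sums and scalar multiples of witnesses) shows that $W$ is an $\R$-linear subspace of $V$. Since $V$ is finite-dimensional and spanned by $f_1,\dots,f_k$, the cosets $f_1+W,\dots,f_k+W$ span $V/W$, and standard linear algebra lets me select $\mathcal{J}\subseteq\{1,\dots,k\}$ such that $\{f_j+W : j\in\mathcal{J}\}$ is an $\R$-basis of $V/W$. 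Put $\mathcal{I}:=\{1,\dots,k\}\setminus\mathcal{J}$.

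Condition \ref{itm_Z2} then falls out essentially for free. For each $i\in\mathcal{I}$, expanding $f_i+W$ in the chosen basis produces real numbers $\lambda_{i,j}$ with $f_i-\sum_{j\in\mathcal{J}}\lambda_{i,j}f_j\in W$, and the definition of $W$ immediately supplies a polynomial $p_i\in\R[t]$ such that $|f_i(t)-\sum_{j\in\mathcal{J}}\lambda_{i,j}f_j(t)-p_i(t)|\to 0$. Since the witness $f:=f_i-\sum_{j\in\mathcal{J}}\lambda_{i,j}f_j$ lies in $\sps(f_1,\dots,f_k)$, the polynomial $p_i$ satisfies the defining condition of $\poly(f_1,\dots,f_k)$.

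The only substantive step, and the one where the Hardy field hypothesis actually enters, is the verification of \ref{itm_Z1}. Let $f=\sum_{j\in\mathcal{J}}c_jf_j$ be a nonzero linear combination; by the basis property, $f+W\ne 0$ in $V/W$, hence $f\notin W$. Given any $p\in\R[t]$, the germ $f-p$ lies in $\Hardy$ and therefore admits a limit in $[-\infty,+\infty]$ (the standard ``trichotomy'' for Hardy field elements, which follows from $\Hardy$ being a field closed under differentiation). If $|f-p|$ did \emph{not} tend to $\infty$, that limit would be a finite real number $c$, and then $f-(p+c)\to 0$ with $p+c\in\R[t]$, placing $f\in W$, a contradiction. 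Hence $|f(t)-p(t)|\to\infty$ for every $p\in\R[t]$, which is exactly \ref{itm_Z1}. I do not anticipate any serious obstacle beyond correctly deploying this ``finite limit versus divergence'' dichotomy for Hardy field elements; the remainder of the argument is pure linear algebra in the quotient $V/W$.
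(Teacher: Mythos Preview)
Your proof is correct and takes a genuinely different route from the paper's. The paper proceeds by induction on $k$: at each step it asks whether the new function $f_k$ can be written (up to a vanishing error) as a real linear combination of the current $\{f_j:j\in\mathcal{J}'\}$ plus a polynomial, and accordingly appends $k$ to either $\mathcal{I}'$ or $\mathcal{J}'$. Your argument replaces this incremental construction with a single linear-algebraic step: form the subspace $W\subset V=\sps(f_1,\dots,f_k)$ of linear combinations that differ from a polynomial by a null germ, and pick $\mathcal{J}$ so that $\{f_j+W:j\in\mathcal{J}\}$ is a basis of $V/W$. This is cleaner and makes transparent why conditions \ref{itm_Z1} and \ref{itm_Z2} hold simultaneously (they are exactly the statements that the chosen cosets are linearly independent, respectively spanning). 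Both proofs ultimately rest on the same Hardy-field fact --- that any germ in $\Hardy$ has a limit in $[-\infty,+\infty]$, so ``not in $W$'' upgrades to ``$|f-p|\to\infty$ for every $p$'' --- and both tacitly use that $\R[t]\subset\Hardy$ (or, equivalently, pass to an extension containing $\R[t]$) so that $f-p$ is a bona fide Hardy-field element; you might make that passage explicit. The paper's induction is slightly more constructive in that it hands you the $\lambda_{i,j}$ and $p_i$ along the way, but your quotient-space formulation is shorter and arguably more illuminating.
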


\begin{proof}
We use induction on $k$. For the base case of this induction, which corresponds to $k=1$, we distinguish between the cases when $\lim_{t\to\infty}|f_1(t)-p(t)|<\infty$ for some $p\in\R[t]$, and when
$\lim_{t\to\infty}| f_1(t)-p(t)|=\infty$ for all $p\in\R[t]$ (noting that the limit must exist since any Hardy field function is eventually monotone.).
If we are in the first case then simply take $\mathcal{I}=\{1\}$, $\mathcal{J}=\emptyset$, $c=\lim_{t\to\infty}f_1(t)-p(t)$, and $p_1(t)=p(t)+c$. If the latter holds, then we pick $\mathcal{I}=\emptyset$ and $\mathcal{J}=\{1\}$ and we are done.

Now assume the claim has already been proven for $k-1$.
This means that for the collection $\{f_1,\ldots,f_{k-1}\}$  there exist disjoint $\mathcal{I}',\mathcal{J}'\subset\{1,\ldots,k-1\}$ with $\mathcal{I}'\cup\mathcal{J}'=\{1,\ldots,k-1\}$, $\{\lambda_{i,j}: i\in\mathcal{I}', j\in\mathcal{J}'\}\subset\R$, and $\{p_i: i\in\mathcal{I}'\}\subset\poly(f_1,\dots,f_{k-1})$ such that conditions \ref{itm_Z1} and \ref{itm_Z2} are satisfied. We again distinguish two cases.
The first case is when there exists $\{\eta_j: j\in\mathcal{J}'\}\subset\R$, $\eta_k\in\R$, and $p\in\R[t]$ such that $\lim_{t\to\infty}|\sum_{j\in\mathcal{J}'}\eta_j f_j(t)+ \eta_k f_k(t)-p(t)|<\infty$.
If we are in this case, then we take $\mathcal{I}=\mathcal{I}'\cup\{k\}$, $\mathcal{J}=\mathcal{J}'$, $c=\lim_{t\to\infty}\sum_{j\in\mathcal{J}'}\eta_j f_j(t)+ \eta_k f_k(t)-p(t)$, $p_k(t)=\eta_k^{-1}(p(t)+c)$, and $\lambda_{k,j}=-\eta_k^{-1}\eta_j$ for all $j\in\mathcal{J}'$. It is then straightforward to check that conditions \ref{itm_Z1} and \ref{itm_Z2} are satisfied.
The second case is when $\lim_{t\to\infty}|\sum_{j\in\mathcal{J}'}\eta_j f_j(t)+ \eta_k f_k(t)-p(t)|=\infty$ for all $\{\eta_j: j\in\mathcal{J}'\}\subset\R$, $\eta_k\in\R$, and $p\in\R
[t]$. But this just means that if we choose $\mathcal{I}=\mathcal{I}'$ and $\mathcal{J}=\mathcal{J}'\cup\{k\}$ then conditions \ref{itm_Z1} and \ref{itm_Z2} hold.
\end{proof}

We also need the following corollary of \cref{prop_Bosh_W-averages}.


\begin{Corollary}
\label{Cor_2claims}
Let $f\in\Hardy$ and assume $f$ satisfies property \ref{property_P_W} with respect to some $W\in\Hardy$ with $1\prec W(t)\ll t$. If
\begin{equation}
\label{eqn_acc_at_one-half}
    \lim_{\epsilon\to0}d_W(\{n:\|f(n)-1/2\|_\T<\epsilon\})>0,
\end{equation}
    then $f=p+g$ where $p\in\Q[x]$ and $\lim_{t\to\infty} g(t)\to 0$.
\end{Corollary}

\begin{proof}
Since $f$ satisfies property \ref{property_P_W}, we either have $|f(t)-p(t)|\ll 1$ for some $p\in\R[t]$ or $|f(t)-p(t)| \succ \log(W(t))$ for all $p\in\R[t]$.
If we are in the latter case then we claim that $f(n)$ is uniformly distributed mod $1$ with respect to $W$-averages, which makes \eqref{eqn_acc_at_one-half} impossible.
To prove the claim, let $\ell$ be the smallest integer for which $f(t)\ll t^\ell$.
If $\ell>1$ then a standard inductive argument using the version of van der Corput's trick\footnote{Strictly speaking, Corollary 2.6 in \cite{BMR20} is formulated for well distribution (an amplified variant of uniform distribution), but the exact same derivation (using \cite[Proposition 2.5]{BMR20}) holds for the version we use here.} in \cite[Corollary 2.6]{BMR20} reduces the claim to the case $\ell=1$.
For $\ell=1$ the claim follows by \cref{prop_Bosh_W-averages}.

So we must be in the first case, when $|f(t)-p(t)|\ll 1$ for some $p\in\R[t]$.
Define $c=\lim_{t\to\infty} f(t)-p(t)$.
By replacing $p(t)$ with $p(t)+c$, we can assume without loss of generality that $c=0$.
Note that $f(t)=g(t)+p(t)$ where $g(t)$ is some function that satisfies $\lim_{t\to\infty} g(t)\to 0$.  If $p-p(0)\notin \Q[x]$ then in view of Weyl's Equidistribution Theorem \cite{Weyl16} the sequence $f(n)$ is uniformly distributed mod $1$ with respect to \Cesaro{} averages and hence also uniformly distributed mod $1$ with respect to $W$-averages, making \eqref{eqn_acc_at_one-half} impossible. Therefore we must have $p-p(0)\in\Q[x]$. Finally, note that $p(0)$ must be rational, because otherwise all the accumulation points of $f(n)$ mod~$1$ are irrationals, contradicting \eqref{eqn_acc_at_one-half}.
\end{proof}

\section{Proofs of the corollaries}
\label{sec_proofs}

In this section we explain how all the results in the introduction can be derived from Theorem~\ref{thm_main}.

Corollary B1 follows directly from part \ref{itm_thm_main_i} of Theorem~\ref{thm_main} by taking $W(t)=t$ and Corollary B2 follows directly from part \ref{itm_thm_main_iii} of Theorem~\ref{thm_main} in the case of an ergodic system.
Corollaries B3 and B4 follow from parts \ref{itm_thm_main_iii} and \ref{itm_thm_main_ii}, respectively, together with the fact that for any bounded sequence $a:\N\to\R$ taking non-negative values we have\footnote{
For completeness, we provide a proof of \eqref{eq_weightedineqaulity}:
Let $c_N(M)=\frac M{W(N)}\left(w(M)-w(M+1)\right)$ if $M<N$, and $c_N(N)=\frac{Nw(N)}{W(N)}$.
Since $w(n)$ is eventually non-increasing, $c_N(M)\geq0$ for sufficiently large $M$. 
The following identity, which can be readily checked by induction, shows that the weighted averages of $a$ are related to the unweighted averages via the coefficients $c_N(M)$:
$$\frac1{W(N)}\sum_{n=1}^Nw(n)a(n)=\sum_{M=1}^Nc_N(M)\biggl(\frac1M\sum_{n=1}^Ma(n)\biggr).$$
Applying this formula with $a\equiv1$ it follows that, for each $N\in\N$, the coefficients $c_N(1),\dots,c_N(N)$ add up to~$1$.
Therefore, for an arbitrary non-negative $a:\N\to\R$, for any $N$ there exists some $M=M(N)\leq N$ such that
$$\frac1M\sum_{n=1}^Ma(n)\geq\frac1{W(N)}\sum_{n=1}^Nw(n)a(n).$$
Moreover, since $c_N(M)\to0$ as $N\to\infty$ for any fixed $N$, we have that $M(N)\to\infty$ as $N\to\infty$, so we get \eqref{eq_weightedineqaulity}.
}
\begin{equation}
\label{eq_weightedineqaulity}\limsup_{N\to\infty}\frac1N\sum_{n=1}^Na(n)\geq\limsup_{N\to\infty}\frac1{W(N)}\sum_{n=1}^Nw(n)a(n).
\end{equation}

Theorem~\ref{thm_main_combinatorial} is obtained by combining Corollaries B3 and B4 with Furstenberg's correspondence principle (\cref{thm_correspondence}).

\begin{proof}[Proof of Corollary A1]
Since all the $f_i$ have polynomial growth, there is a maximum degree $d$ among all polynomials in $\poly(f_1,\dots,f_k)$.
If all $p\in\poly(f_1,\dots,f_k)$ satisfies $p(0)=0$, then $\poly(f_1,\dots,f_k)\subset\sps(t,t^2,\dots,t^d)$. 
Since the polynomials $t,t^2,\dots,t^d$ are jointly intersective, the result follows from part \ref{itm_cond_1} of Theorem~\ref{thm_main_combinatorial}.
\end{proof}

\begin{proof}[Proof of Corollary A2]
The conclusion follows immediately from Corollary A1 when the rounding function is the closest integer function $[\cdot]$.
We next prove the result for the floor function $\lfloor\cdot\rfloor$, noting that the case for the ceiling function $\lceil\cdot\rceil$ is analogous.
Since $\lfloor x\rfloor=[x-1/2]$ and when $n,c\in\N$ we have $\lfloor n^c\rfloor=n^c=[n^c]$, it follows that, for each $=1,\dots,k$, $\lfloor n^{c_i}\rfloor=[f_i(n)]$ where
$$f_i(n)=\begin{cases}
n^{c_i}&\text{ if }c_i\in\N\\
n^{c_i}-1/2&\text{otherwise.}
\end{cases}$$
Since the functions $f_1,\dots,f_k$ satisfy the conditions in Corollary A1, it follows that the conclusion holds.
\end{proof}

Corollary A3 can be shown to follow from Corollary A1.
Alternative, since ${\mathcal K}={\mathcal K}({\mathcal L},q)$ where $q(t)=t$ and ${\mathcal L}=\{t^c\log_m^r:c\in(0,1),m\in\N,r>0\}$, by using \cref{thm_110} one can derive Corollary A3 from part \ref{itm_cond_1} of Theorem~\ref{thm_main_combinatorial}.

The following theorem provides a convenient description of functions which satisfy Condition \ref{itm_cond_1} in the Theorem~\ref{thm_main_combinatorial} and, in particular, implies \cref{thm_110}.


\begin{Theorem}\label{thm_equivalenttoA}
Let $q\in\Z[t]$ be an intersective polynomial, let $\Hardy$ be a Hardy field and let ${\mathcal L}\subset\Hardy$ be a family of functions such that any $f\in{\mathcal L}$ satisfies $t^{k-1}\prec f(t)\prec t^k$ for some $k\in\N$, and any distinct $f,g\in{\mathcal L}$ have different growth.
Let $\tilde {\mathcal K}({\mathcal L},q)$ to be the linear span over $\R$ of ${\mathcal L}$ and $q\R[t]$ and let ${\mathcal K}({\mathcal L},q)$ be the set of functions $f\in\Hardy$ such that there exists $f^*\in\tilde {\mathcal K}({\mathcal L},q)$ with $\lim \big|f(t)-f^*(t)\big|=0$.

Then functions $f_1,\dots,f_k$ from $\Hardy$ satisfy Condition \ref{itm_cond_1} of Theorem~\ref{thm_main_combinatorial} if and only if $\{f_1,\dots,f_k\}\subset{\mathcal K}({\mathcal L},q)$ for some ${\mathcal L}$ and $q$ as above.
\end{Theorem}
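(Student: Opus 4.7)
I will prove both directions; $(\Leftarrow)$ is direct, while $(\Rightarrow)$ contains the main technical work. For $(\Leftarrow)$, suppose $\{f_1,\dots,f_k\}\subset{\mathcal K}({\mathcal L},q)$, so each $f_i = g_i + qr_i + o(1)$ with $g_i\in\sps({\mathcal L})$ and $r_i\in\R[t]$. For any $p\in\poly(f_1,\dots,f_k)$, writing $p = \sum c_if_i + o(1)$ gives $p - q\sum c_ir_i = \sum c_ig_i + o(1)$. The key observation is that no nonzero element of $\sps({\mathcal L})$ can be within $o(1)$ of a polynomial: expanding along distinct-growth ${\mathcal L}$-elements, the dominant term $h$ satisfies $t^{\ell-1}\prec h\prec t^\ell$ for some $\ell$, a growth rate no polynomial can match. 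Hence $\sum c_ig_i = 0$ and $p\in q\R[t]$. Since the $f_i$ have polynomial growth of degree bounded by some $D$, $\poly(f_1,\dots,f_k)\subset\sps(q,qt,\dots,qt^{D-\deg q})$; these polynomials lie in $\Z[t]$ and are jointly intersective (for any $m$, intersectivity of $q$ gives $n$ with $m\mid q(n)$, whence $m\mid q(n)n^j$ for all $j\geq 0$), which yields Condition \ref{itm_cond_1}.

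For $(\Rightarrow)$, given jointly intersective $q_1,\dots,q_\ell\in\Z[t]$ with $\poly(f_1,\dots,f_k)\subset\sps(q_1,\dots,q_\ell)$, I first construct an intersective $q\in\Z[t]$ as the primitive representative of $\gcd_{\Q[t]}(q_1,\dots,q_\ell)$ via Gauss's lemma. B\'ezout in $\Q[t]$ produces $\tilde r_i\in\Z[t]$ and $D\in\Z$ with $\sum\tilde r_iq_i = Dq$; joint intersectivity at modulus $mD$ then forces $m\mid q(n)$, so $q$ is intersective and $\poly(f_1,\dots,f_k)\subset q\R[t]$. Applying \cref{lem_simple_normal_form_new_new} partitions the indices into ${\mathcal I}\cup{\mathcal J}$, where no nontrivial $\R$-linear combination of $\{f_j\}_{j\in{\mathcal J}}$ is close to any polynomial, and each $f_i$ with $i\in{\mathcal I}$ decomposes as $f_i = \sum_{j\in{\mathcal J}}\lambda_{i,j}f_j + p_i + o(1)$ with $p_i\in\poly(f_1,\dots,f_k)\subset q\R[t]$.

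For each $j\in{\mathcal J}$, the Hardy-field structure yields a polynomial-asymptotic expansion $f_j = P(f_j) + R(f_j)$ with $R(f_j)\in\Hardy$ satisfying $t^{m_j-1}\prec R(f_j)\prec t^{m_j}$ for some $m_j\geq 1$; this is obtained by iteratively subtracting the leading polynomial monomial of $f_j$, and the assumption $|f_j-p|\to\infty$ forces $R(f_j)$ to be unbounded. Polynomial division gives $P(f_j) = qr_j + s_j$ with $\deg s_j<\deg q$; setting $g_j := R(f_j) + s_j$, the function $g_j\in\Hardy$ is again strictly between $t^{m_j-1}$ and $t^{m_j}$ whenever $\deg s_j\leq m_j-1$, and then $f_j = g_j + qr_j$. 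I collect the $g_j$'s into ${\mathcal L}$, iteratively replacing any two with coincident growth by an appropriate linear combination of strictly smaller growth (the process terminates since growth strictly decreases at each step). Combined with the decomposition for $i\in{\mathcal I}$, this yields $f_i\in{\mathcal K}({\mathcal L},q)$ for all $i$ by linearity.

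The main technical obstacle is securing the compatibility $\deg s_j\leq m_j-1$ throughout the cleaning step, since the $q$ produced above may have degree exceeding $m_j$ for some $j\in{\mathcal J}$. Overcoming this will require refining the choice of $q$---passing to an appropriate intersective divisor or reorganizing the polynomial asymptotics via the freedom afforded by Condition \ref{itm_cond_1}---so that the polynomial parts of every $f_j$ become compatible with the final $q$. Verifying this compatibility coherently for all $j$ simultaneously is where I expect the heart of the forward direction to lie.
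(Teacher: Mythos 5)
Your $(\Leftarrow)$ direction is correct and is essentially the paper's argument: membership in ${\mathcal K}({\mathcal L},q)$ forces $\poly(f_1,\dots,f_k)\subset q\R[t]$, which is spanned by the jointly intersective family $q,qt,qt^2,\dots$; your observation that no nonzero element of $\sps({\mathcal L})$ can be within $o(1)$ of a polynomial supplies the detail the paper leaves implicit. Likewise, your gcd/B\'ezout construction of a single intersective $q$ dividing $q_1,\dots,q_\ell$ is a legitimate self-contained substitute for the paper's appeal to \cite[Proposition 6.1]{BLL08}.

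The $(\Rightarrow)$ direction, however, is not established, and the gap you flag at the end is exactly the heart of the matter. The paper does not build ${\mathcal L}$ by hand from \cref{lem_simple_normal_form_new_new}; it invokes \cite[Lemma A.3]{Richter20arXiv}, which produces functions $g_1,\dots,g_m\in\sp(f_1,\dots,f_k)$ of pairwise different growth with $t^{\ell-1}\prec g_j(t)\prec t^{\ell}$ \emph{together with polynomials $p_1,\dots,p_s$ that already lie in $\poly(f_1,\dots,f_k)$}, such that every $f_i$ is within $o(1)$ of $\sps(g_1,\dots,g_m,p_1,\dots,p_s)$. Condition \ref{itm_cond_1} of \cref{thm_main_combinatorial} together with \cite[Proposition 6.1]{BLL08} then places all the $p_i$ in $q\R[t]$, and ${\mathcal L}=\{g_1,\dots,g_m\}$ finishes the proof: no division-by-$q$ remainder ever appears, because the only polynomial components that get split off are guaranteed to lie in $\poly(f_1,\dots,f_k)$, while low-order polynomial pieces with other coefficients simply stay inside the fractional-growth functions $g_j$. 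Your route (split each $f_j$, $j\in{\mathcal J}$, into its full polynomial part $P(f_j)$ plus a fractional-growth rest, then divide $P(f_j)$ by $q$ and absorb the remainder $s_j$ into $g_j$) creates precisely the incompatibility you describe, and the proposed repair of ``refining the choice of $q$'' cannot work in general: by your own gcd argument $q$ is pinned down up to integer multiples by $\poly(f_1,\dots,f_k)$ --- for instance when one of the $f_i$ is itself an intersective polynomial of degree $6$ with no intersective proper divisor, such as $(t^2-13)(t^2-17)(t^2-221)$ --- while another function in ${\mathcal J}$ may carry a dominating polynomial part with coefficients unrelated to $q$, e.g.\ $f_j(t)=t^{1/2}+\alpha t^{3}$ with $\alpha$ irrational; this configuration is compatible with Condition \ref{itm_cond_1} (here $\poly$ is just $\R q$), yet no alternative intersective $q$ absorbs $\alpha t^{3}$, so the compatibility you hope to arrange ``for all $j$ simultaneously'' is not a technical refinement but the actual missing content. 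To close the gap you must either cite \cite[Lemma A.3]{Richter20arXiv} as the paper does, or prove a statement of that strength (a decomposition whose polynomial parts can be taken inside $\poly(f_1,\dots,f_k)$); as written, the forward implication is unproved.
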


\begin{proof}
Suppose that $f_1,\dots,f_k\in{\mathcal K}({\mathcal L},q)$ for some ${\mathcal L}$ and $q$ as in the statement of \cref{thm_equivalenttoA}.
Then $\poly(f_1,\dots,f_k)\subset q\R[t]$, which is spanned by the jointly intersective collection $\{t^\ell q(t):\ell\geq0\}$.
This shows that $f_1,\dots,f_k\in\Hardy$ satisfy Condition \ref{itm_cond_1} of Theorem~\ref{thm_main_combinatorial}.

Conversely, let $\Hardy$ be a Hardy field and let $f_1,\dots,f_k\in\Hardy$ satisfy Condition \ref{itm_cond_1} of Theorem~\ref{thm_main_combinatorial}.
Using \cite[Lemma A.3]{Richter20arXiv} we can find $g_1,\dots,g_m\in\sp(f_1,\dots,f_k)$ of different growth satisfying $t^{\ell-1}\prec g_j(t)\prec t^\ell$ for all $j=1,\dots,m$ and some $\ell\in\N$ (which depends on $j$)
and $p_1,\dots,p_s\in\poly(f_1,\dots,f_k)$ such that for every $i=1,\dots,k$ there exists $f_i^*\in\sp(g_1,\dots,g_m,p_1,\dots,p_s)$ with $\lim|f_i(t)-f_i^*(t)|=0$.
Condition \ref{itm_cond_1} implies that $p_1,\dots,p_s\in\sps(q_1,\dots,q_\ell)$ for some jointly intersective polynomials $q_1,\dots,q_\ell\in\Z[t]$.
In view of \cite[Proposition 6.1]{BLL08} there exists an intersective polynomial $q$ such that $q_1,\dots,q_\ell\in q\Z[t]$, and hence $p_1,\dots,p_s\in q\R[t]$.
Letting ${\mathcal L}=\{g_1,\dots,g_m\}$, we conclude that $f_1,\dots,f_k\in{\mathcal K}({\mathcal L},q)$.

\end{proof}

Finally, it remains to prove Corollary A4.

\begin{proof}[Proof of Corollary A4]
For every $\ell\in\N$ let $F_\ell$ denote the collection of functions
$$F_\ell:=\big\{n\mapsto f_i(n+j):i\in\{1,\dots,k\},j\in\{0,\dots,\ell\}\big\}.$$
In view of Theorem~\ref{thm_main_combinatorial}, it suffices to show that $F_\ell$ satisfies condition \ref{itm_cond_2} in that theorem.
  Considering a finite Taylor expansion of $f_j$, it follows that each function in $F_\ell$ ``almost'' belongs to $\Hsp(f_1,\ldots,f_k)$, in the sense that for any $f\in F_\ell$ there exists $g\in \Hsp(f_1,\ldots,f_k)$ such that $|f(n)-g(n)|\to0$.
  More generally, for any $f\in\sp(F_\ell)$ there exists $g\in \Hsp(f_1,\ldots,f_k)$ such that $|f(n)-g(n)|\to0$.

  Next let $f\in \sp(F_\ell)$ and let $g\in \Hsp(f_1,\ldots,f_k)$ be such that $|f(n)-g(n)|\to0$.
  Suppose, for the sake of a contradiction, that there exists $q\in\Z[t]$ with degree $d$ such that $\lim_{t\to\infty}|f(t)-q(t)|< \infty$.
  Therefore also $\lim_{t\to\infty}|g(t)-q(t)|< \infty$.
  Taking derivatives we have $\lim_{t\to\infty}|g^{(d)}(t)-q^{(d)}(t)|=0$, and since $q^{(d)}$ is a non-zero constant, this contradicts the assumption on $\Hsp(f_1,\dots,f_k)$.
\end{proof}

\section{Characteristic Factors}
\label{sec_charfac}
In this section we show that nilsystems are characteristic for the ergodic averages \eqref{eqn_multiple_recurrence_closest-integer}.
We will make use of the uniformity seminorms introduced in \cite{HK05a} for ergodic systems. As was observed in \cite{CFH11}, the ergodicity of the system is not necessary.

\begin{Definition}
  Let $(X,\mathcal{B},\mu,T)$ be an invertible probability measure preserving system.
  We define the \define{uniformity seminorms} on $L^\infty(X)$ recursively as follows.
  $$\lhk h\rhk_0=\int_Xh\d\mu\qquad\text{and}\qquad\lhk h\rhk_{s}^{2^{s}}=\lim_{N\to\infty}\frac{1}{N}\sum_{n=1}^N\, \lhk \overline{h}\cdot T^nh\rhk_{s-1}^{2^{s-1}}\text{ for every }s\in\N.$$
The existence of the limits in this definition was established in \cite{HK05a} for ergodic systems and in \cite[Section 2.2]{CFH11} in general.
\end{Definition}

Here is the main theorem of this section.

\begin{Theorem}\label{prop_characteristic_factor_main}
Let $\Hardy$ be a Hardy field and $W\in\Hardy$ with $1\prec W(t)\ll t$. Assume $f_1,\ldots,f_k\in \Hardy$ satisfy \ref{property_P_W}, $|f_1(t)|\ll \ldots\ll |f_k(t)|$, and $\lim_{t\to\infty}|f_k(t)|=\lim_{t\to\infty}|f_k(t)-f_i(t)|=\infty$ for every $i<k$. Then there exists $s\in\N$ such that for any invertible measure preserving system $(X,\B,\mu,T)$ and any $h_k\in L^\infty(X)$ with $\lhk h_k\rhk_s=0$ we have
\begin{equation}
\label{eqn_characteristic_factor_main_1}
\sup_{h_1,\ldots,h_{k-1}\in L^\infty}~\sup_{a\in\ell^\infty}~\left\|\frac{1}{W(N)}\sum_{n=1}^N w(n)a(n)\prod_{i=1}^k T^{[f_i(n)]}h_i\right\|_{L^2}= \,\oh_{N\to\infty}(1),
\end{equation}
where the suprema are taken over all functions $h_1,\ldots,h_{k-1}\in L^\infty (X)$ with $\|h_i\|_{L^\infty}\leq 1$ and all $a\in\ell^\infty(\N)$ with $\|a\|_{\ell^\infty}\leq 1$.
\end{Theorem}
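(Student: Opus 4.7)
The plan is to prove \eqref{eqn_characteristic_factor_main_1} via a PET-induction scheme based on iterated van der Corput reductions, in the spirit of the arguments in \cite{BL96,Frantzikinakis10,BMR17arXiv}, but adapted to both the weighted averages $\tfrac{1}{W(N)}\sum_{n=1}^N w(n)(\cdots)$ and to mixed polynomial / non-polynomial Hardy families. The integer $s$ will be determined by a notion of complexity of the tuple $(f_1,\ldots,f_k)$ that is strictly reduced at every step.

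First, I would establish a weighted version of the van der Corput inequality: for any bounded Hilbert-space sequence $(u_n)_{n\in\N}$,
$$\left\|\frac{1}{W(N)}\sum_{n=1}^N w(n)\, u_n\right\|^2 \,\leq\, \frac{2}{M+1}\sum_{|m|\leq M}\left|\frac{1}{W(N)}\sum_{n=1}^N w(n)\,\langle u_{n+m},u_n\rangle\right| \,+\, \oh_{N\to\infty}(1)+\oh_{M\to\infty}(1).$$
This reduces to the classical statement once one shows that $w(n+m)/w(n)\to 1$ uniformly on bounded ranges of $m$, which follows from $W\in\Hardy$ and $1\prec W(t)\ll t$. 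Applying this estimate to $u_n=a(n)\prod_i T^{[f_i(n)]}h_i$ and acting by $T^{-[f_1(n)]}$ on the inner correlation, we obtain averages whose integrands involve $2k-1$ factors of the form $T^{[f_i(n+m)]-[f_1(n)]}h_i$ and $\overline{T^{[f_i(n)]-[f_1(n)]}h_i}$.

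The second step is the iterative reduction. At each stage, we track a finite family of Hardy "virtual" functions arising from the previous van der Corput step, and apply van der Corput again, shifted so as to eliminate the factor with the smallest growth. Because $|f_k(t)|$ dominates all other $|f_i(t)|$ and $|f_k(t)-f_i(t)|\to\infty$, the $h_k$-factor is never cancelled in this process; moreover, Property~\ref{property_P_W} is preserved (for the new $W$-averages of the inner correlations) since Hardy fields are closed under differentiation and under taking differences $f(t+m)-f(t)$. A Weyl-type complexity function on families of Hardy germs (ordered first by the number of "active" functions and then by the largest growth present, analogous to the PET weights of \cite{BL96,BLL08}) strictly decreases with each iteration and must therefore terminate after finitely many steps, say $s-1$, leaving an average whose supremum over the other $h_i$ and over $a(n)$ is bounded by $\lhk h_k\rhk_s$.

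The main obstacle is twofold. First, after each van der Corput step the new virtual family $\{f_i(n+m)-f_j(n)\}$ must still satisfy a Property~(P) analogue for the surviving non-polynomial functions to be equidistributed on average — this is where \cref{prop_Bosh_W-averages} is used to show that the fractional parts of the relevant Hardy functions equidistribute with respect to $W$-averages, allowing the $\Oh(1)$ discrepancies between $[f_i(n+m)]-[f_i(n)]$ and $[f_i(n+m)-f_i(n)]$ to be absorbed by decomposing according to these fractional parts. Second, proving that the PET complexity strictly decreases in the mixed polynomial / non-polynomial Hardy setting (as opposed to the purely polynomial setting) is more delicate, because the "degree drop" under differentiation is a drop in Boshernitzan's growth order rather than in polynomial degree; this is handled by ordering the family lexicographically first by the common growth class and then by the induced complexity within that class, and then using that $|f_k(t)-f_i(t)|\to\infty$ prevents accidental cancellations that would stall the induction.
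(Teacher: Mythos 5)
Your overall architecture (a weighted van der Corput inequality plus a PET-type induction on a complexity of the Hardy family, with the slowest function subtracted at each step) matches the paper's treatment of the super-linear regime, and your weighted van der Corput step is essentially the paper's \cref{prop_vdC}. The genuine gap is the base case of the induction: the sub-linear regime, i.e. when the dominant function $f_k$ has degree one but is not asymptotically linear (e.g. $f_k(t)=\sqrt t$, or all $f_i$ sub-linear). There, direct van der Corput differencing makes no progress: for fixed $m$ the shifts $[f_i(n+m)]-[f_i(n)]$ are bounded (indeed $0$ for most $n$), so the resulting correlations are of size $\|h_k\|_{L^2}^2$ and never connect to any seminorm $\lhk h_k\rhk_s$; no complexity ordered by ``number of active functions and largest growth'' can decrease past this point. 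The paper needs a separate idea here: reparametrize along the level sets $K_j=\{n: j-1<f_k(n)\leq j\}$, so that the fastest function becomes exactly $t$ and the $W$-weights are converted into new induced weights $p_j=\sum_{n\in K_j}w(n)$; this is why \cref{prop_characteristic_factor_linear} is stated and proved (quantitatively, with constants $C_k$ and seminorm $\lhk h_k\rhk_{k+1}$) for arbitrary eventually monotone weights $p_n$ with $P_N\to\infty$ and $p_N/P_N\to0$, and \cref{prop_characteristic_factor_sublinear} is deduced from it. Your proposal contains no analogue of this reduction, so the induction you describe stalls exactly where the paper's argument has to work hardest.

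A secondary problem is your treatment of the rounding discrepancies. You propose to control the $\Oh(1)$ errors between $[f_i(n+m)]-[f_1(n)]$ and $[f_i(n+m)-f_1(n)]$ by equidistribution of fractional parts via \cref{prop_Bosh_W-averages}; but under Property \ref{property_P_W} the fractional parts need not equidistribute at all (the hypothesis allows $|f_i(t)-p(t)|\ll 1$ for a real polynomial $p$, e.g. $f_i$ within $\Oh(1)$ of an integer polynomial), so an argument resting on equidistribution is both unnecessary and unsound in general. The paper instead decomposes $\N$ into the finitely many sets on which the integer-valued error vectors are constant and absorbs the corresponding indicator functions into the arbitrary bounded weight $a(n)$ --- this is precisely the reason the supremum over $a\in\ell^\infty$ is built into the statement of \cref{prop_characteristic_factor_main}, and it is what makes the induction close. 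Equidistribution inputs from \cite{Richter20arXiv} only enter later, in \cref{sec_nilfac}, after the reduction to nilsystems.
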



\subsection{The sub-linear case of \cref{prop_characteristic_factor_main}}

For the proof of \cref{prop_characteristic_factor_main} we have to distinguish between the case when $f_k$ has at most linear growth (i.e.\ $|f_k(t)|\ll t$), and the case when $f_k$ has super-linear growth (i.e.\ $t\prec |f_k(t)|$), because the respective inductive procedures used to prove these two cases rely on different arguments. 
In this subsection we focus on the proof of the former case, which for the convenience of the reader we state as a separate theorem here.
In this theorem we assume only that the ``weight-function'' $W(t)$ has \define{sub-exponential growth} (i.e., $W(t)\prec c^t$ for all $c>1$), in contrast to \cref{prop_characteristic_factor_main} where $W(t)$ is assumed to have at most linear growth (i.e., $W(t)\ll t$). This is because averages with sub-exponential weights show up naturally in our inductive procedure.

\begin{Theorem}
\label{prop_characteristic_factor_sublinear}
Let $k\in\N$.
Let $\Hardy$ be a Hardy field and $W\in\Hardy$ with $1\prec W(t)$ and suppose $W(t)$ has sub-exponential growth. Assume $f_1,\ldots,f_k\in \Hardy$ satisfy property \ref{property_P_W}, $|f_1(t)|\ll\ldots\ll |f_k(t)|\ll t$, and $\lim_{t\to\infty}|f_k(t)|=\lim_{t\to\infty}|f_k(t)-f_i(t)|=\infty$ for every $i<k$.
Then there exists a constant $C_k>0$, depending only on $k$ and $f_1,\dots,f_k$, such that for any invertible measure preserving system $(X,\B,\mu,T)$ and any $h_k\in L^\infty(X)$ we have
\begin{equation}
\label{eqn_characteristic_factor_sublinear_1}
\sup_{h_1,\ldots,h_{k-1}\in L^\infty}~\sup_{a\in\ell^\infty}~\left\|\frac{1}{W(N)}\sum_{n=1}^N w(n)a(n)\prod_{i=1}^k T^{[f_i(n)]}h_i\right\|_{L^2}\leq\,C_k \lhk h_k\rhk_{k+1}+\oh_{N\to\infty}(1),
\end{equation}
where the suprema are taken over all functions $h_1,\ldots,h_{k-1}\in L^\infty (X)$ with $\|h_i\|_{L^\infty}\leq 1$ and all $a\in\ell^\infty(\N)$ with $\|a\|_{\ell^\infty}\leq 1$.
\end{Theorem}

Given $f\colon\R\to\R$ and $g\colon\R\to(0,\infty)$, we write $f(t)\sim g(t)$ if $\lim_{t\to\infty} f(t)/g(t)=1$.
The first step in proving \cref{prop_characteristic_factor_sublinear} is to reduce it to 
the case $f_k(t)\sim c t$ for some $c>0$ using a method often described as ``change of variables''.

\begin{Lemma}
\label{lem_changeofvariables}
Suppose \cref{prop_characteristic_factor_sublinear} has already been proven for a specific $k\in\N$ when $f_k(t)\sim c t$ for some $c>0$. Then \cref{prop_characteristic_factor_sublinear} follows in full generality for this specific $k$.
\end{Lemma}

\begin{Remark}
\label{rem_subexponential_growth}
It follows from \cite[Lemma 6.2 and Remark 6.3]{Richter20arXiv} that if $W,g\in\Hardy$ with $1\prec W$ and $\log(W(t))\prec g(t)$ then $W\circ g^{-1}$ has sub-exponential growth.
\end{Remark}


\begin{proof}[Proof of \cref{lem_changeofvariables}]
By replacing $f_k$ with $-f_k$ 
and $T$ with its inverse $T^{-1}$ if necessary, we can assume without loss of generality that $f_k(t)$ is eventually positive.
If $f_k(t)\sim ct$ for some $c>0$ then there is nothing to show. Thus, we can assume $f_k(t)\prec t$. Let $g(t)\coloneqq f_k(t)$ and define
$
K_{j}\coloneqq \left\{n\in\N: j < g(n)\leq  j+1\right\}.
$
A straightforward calculation shows that for all but finitely many $j\in\N$ one has
\begin{equation}
\label{eqn_Kj_V}
K_j\,=\,\N\cap \big(g^{-1}(j),g^{-1}(j+1)\big]
\,=\,\big\{\lfloor g^{-1}(j)\rfloor+1,\lfloor g^{-1}(j)\rfloor+2,\ldots,\lfloor g^{-1}(j+1)\rfloor \big\}.
\end{equation}
Since
\begin{equation*}
\begin{split}
\frac{1}{W(N)}\sum_{n=1}^N & w(n) a(n) \prod_{i=1}^k T^{[f_i(n)]}h_i
\\
&=\frac{1}{W(N)}\sum_{j=1}^{\lfloor g(N)\rfloor} \sum_{n\in K_{j}} w(n)a(n)\prod_{i=1}^k T^{[f_i(n)]}h_i\,+\,\oh_{N\to\infty}(1),
\end{split}
\end{equation*}
instead of \eqref{eqn_characteristic_factor_sublinear_1} it suffices to show
\begin{equation}
\label{eqn_characteristic_factor_sublinear_2}
\sup_{h_1,\ldots,h_{k-1}\in L^\infty}~\sup_{a\in\ell^\infty}~\left\|\frac{1}{W(N)}\sum_{j=1}^{\lfloor g(N)\rfloor} \sum_{n\in K_{j}} w(n)a(n)\prod_{i=1}^k T^{[f_i(n)]}h_i\right\|_{L^2}\leq\,C_k \lhk h_k\rhk_{k+1}+\oh_{N\to\infty}(1).
\end{equation}
Set $V=W\circ g^{-1}$. According to \cref{rem_subexponential_growth}, $V(t)$ has sub-exponential growth. In particular, $\lim_{N\to\infty}\sup_{c\in[0,1]}\frac{V(N+c)}{V(N)}=1$, which implies that
\[
\lim_{N\to\infty}\frac{W(N)}{V(\lfloor g(N)\rfloor)}~=~\lim_{N\to\infty}\frac{V(g(N))}{V(\lfloor g(N)\rfloor)}~=~ 1.
\]
Therefore, \eqref{eqn_characteristic_factor_sublinear_2} is equivalent to
\begin{equation*}
\label{eqn_conclusion_of_prop_slow_vdC_3}
\sup_{h_1,\ldots,h_{k-1}\in L^\infty}~\sup_{a\in\ell^\infty}~\left\|\frac{1}{V(\lfloor g(N)\rfloor)}\sum_{j=1}^{\lfloor g(N)\rfloor} \sum_{n\in K_{j}} w(n)a(n)\prod_{i=1}^k T^{[f_i(n)]}h_i\right\|_{L^2}\leq\,C_k \lhk h_k\rhk_{k+1}+\oh_{N\to\infty}(1)
\end{equation*}
which is implied by
\begin{equation}
\label{eqn_conclusion_of_prop_slow_vdC_4}
\sup_{h_1,\ldots,h_{k-1}\in L^\infty}~\sup_{a\in\ell^\infty}~\left\|\frac{1}{V(N)}\sum_{j=1}^{N} \sum_{n\in K_{j}} w(n)a(n)\prod_{i=1}^k T^{[f_i(n)]}h_i\right\|_{L^2}\leq\,C_k \lhk h_k\rhk_{k+1}+\oh_{N\to\infty}(1)
\end{equation}
Define $g_i(t)\coloneqq f_i(g^{-1}(t))$ for $i=1,\ldots,k$ and note that $g_k(t)=t$.
Also note that $g_1,\ldots,g_k,V$ belong to the same Hardy field. Indeed, from $f_1,\ldots,f_k, W\in\Hardy$ we conclude that $g_1,\ldots,g_k,V\in\Hardy\circ g^{-1}$, and according to \cite[Lemma 6.4]{Boshernitzan81} if $g\in\Hardy$ with $\lim_{t\to\infty} g(t)=\infty$ then $\Hardy\circ g^{-1}=\{f\circ g^{-1}: f\in\Hardy\}$ is a Hardy field.
Since $|f_i(t)|\ll f_k(t)$ for all $i\leq k$, there exists $C\in\N$ such that $|f_i(t)|< C f_k(t)$ for all $t$ in some half-line $[t_0,\infty)$.
This implies that $|f_i(n)-g_i(j)|< C$ for all $n\in K_j$ and all but finitely many $j\in\N$.
In other words, for all but finitely many $j\in\N$ and all $n\in K_j$ we have $[f_i(n)]\in [g_i(j)]+\{-C,\ldots,C\}$. For every $\eta=(\eta_1,\ldots,\eta_k)\in\{-C,\ldots,C\}^{k}$ define
\begin{eqnarray*}
K_{j}^{(\eta)}&\coloneqq & \{n\in K_j: [f_1(n)]= [g_1(j)]+\eta_1,\ldots, [f_k(n)]= [g_k(j)]+\eta_k\}.
\end{eqnarray*}
Then, $\bigcup_{\eta\in \{-C,\ldots,C\}^k} K_{j}^{(\eta)}=K_j$. Therefore, to prove \eqref{eqn_conclusion_of_prop_slow_vdC_4}, it suffices to show that for every $\eta=(\eta_1,\ldots,\eta_k)\in\{-C,\ldots,C\}^{k}$ we have
\begin{equation}
\label{eqn_conclusion_of_prop_slow_vdC_5}
\sup_{h_1,\ldots,h_{k-1}\in L^\infty}\,\sup_{a\in\ell^\infty}\left\|\frac{1}{V(N)}\sum_{j=1}^{N} \sum_{n\in K_{j}^{(\eta)}} w(n)a(n)\prod_{i=1}^k T^{[g_i(j)]}T^{\eta_i}h_i\right\|_{L^2}\leq\,C_{k,\eta} \lhk h_k\rhk_{k+1}+\oh_{N\to\infty}(1),
\end{equation}
for some constants $C_{k,\eta}$.
Note that $\lhk h_k\rhk_{k+1}=\lhk T^{\eta_k}h_{k}\rhk_{k+1}$. Moreover, the supremum over all functions $h_1,\ldots,h_{k-1}\in L^\infty (X)$ with $\|h_i\|_{L^\infty}\leq 1$ is the same as the supremum over all functions $T^{\eta_1}h_{1},\ldots,T^{\eta_{k-1}}h_{k-1}\in L^\infty (X)$ with $\|T^{\eta_i}h_{i}\|_{L^\infty}\leq 1$. This means that \eqref{eqn_conclusion_of_prop_slow_vdC_5} holds if and only if for all $h_k\in L^\infty(X)$ 
we have
\begin{equation}
\label{eqn_conclusion_of_prop_slow_vdC_6}
\sup_{h_1,\ldots,h_{k-1}\in L^\infty}~\sup_{a\in\ell^\infty}~\left\|\frac{1}{V(N)}\sum_{j=1}^{N} \sum_{n\in K_{j}^{(\eta)}} w(n)a(n)\prod_{i=1}^k T^{[g_i(j)]}h_i\right\|_{L^2}\leq\,C_{k,\eta} \lhk h_k\rhk_{k+1}+\oh_{N\to\infty}(1),
\end{equation}
where the suprema are taken over all functions $h_1,\ldots,h_{k-1}\in L^\infty (X)$ with $\|h_i\|_{L^\infty}\leq 1$ and all $a\in\ell^\infty(\N)$ with $\|a\|_{\ell^\infty}\leq 1$.
Define $b\in\ell^\infty(\N)$ as
$$
b(j)\,\coloneqq \, \frac{\sum_{n\in K_{j}^{(\eta)}} w(n)a(n)}{\sum_{n\in K_{j}} w(n)},\qquad\forall j\in\N,
$$
and note that $\|b\|_{\ell^\infty}\leq 1$ since $\|a\|_{\ell^\infty}\leq 1$. Thus \eqref{eqn_conclusion_of_prop_slow_vdC_6} becomes
 \begin{equation}
\label{eqn_conclusion_of_prop_slow_vdC_7}
\sup_{h_1,\ldots,h_{k-1}\in L^\infty}~\sup_{b\in\ell^\infty}~\left\|\frac{1}{V(N)}\sum_{j=1}^{N}b(j)\left(\sum_{n\in K_{j}} w(n)\right)\prod_{i=1}^k T^{[g_i(j)]}h_{i}\right\|_{L^2}\leq\,C_{k,\eta} \lhk h_k\rhk_{k+1}+\oh_{N\to\infty}(1).
\end{equation}
Let $v(n)\coloneqq \Delta V(n)=V(n+1)-V(n)$.
Pick $y_j\in\R$ such that $\lfloor g^{-1}(j)\rfloor+1=g^{-1}(j+y_j)$ and observe that $y_j\to0$ as $j\to\infty$ because $g(t)=f_k(t)\prec t$. Then
\begin{align*}
\frac{W(\lfloor g^{-1}(j)\rfloor +1)}{v(j)}=\frac{V(j+y_j)}{v(j)}
=\frac{V(j)}{v(j)}+\frac{V(j+y_j)-V(j)}{V(j+1)-V(j)}
\end{align*}
A straightforward application of the mean value theorem shows that $\frac{V(j+y_j)-V(j)}{V(j+1)-V(j)}=\Oh(y_j)$ and hence
\begin{align}
\label{eqn_quot_V_v_1}
\frac{W(\lfloor g^{-1}(j)\rfloor +1)}{v(j)}=\frac{V(j)}{v(j)}+\oh_{j\to\infty}(1).
\end{align}
A similar calculation shows
\begin{align}
\label{eqn_quot_V_v_2}
\frac{W(\lfloor g^{-1}(j+1)\rfloor +1)}{v(j)}=
\frac{V(j+1)}{v(j)}+\oh_{j\to\infty}(1).
\end{align}
By invoking \eqref{eqn_Kj_V} and combining \eqref{eqn_quot_V_v_1} and \eqref{eqn_quot_V_v_2} we get
\begin{align*}
\frac{\sum_{n\in K_{j}} w(n)}{v(j)}&=\frac{W(\lfloor g^{-1}(j+1)\rfloor +1)-W(\lfloor g^{-1}(j)\rfloor+1)}{v(j)}
\\
&=\frac{V(j+1)-V(j)}{v(j)}+\oh_{j\to\infty}(1)
\\
&=1+\oh_{j\to\infty}(1).
\end{align*}
Therefore \eqref{eqn_conclusion_of_prop_slow_vdC_7} is equivalent to
\begin{equation}
\label{eqn_conclusion_of_prop_slow_vdC_8_ne}
\sup_{h_1,\ldots,h_{k-1}\in L^\infty}~\sup_{b\in\ell^\infty}~\left\|\frac{1}{V(N)}\sum_{j=1}^{N}b(j)v(j)\prod_{i=1}^k T^{[g_i(j)]}h_{i}\right\|_{L^2}\leq\,C_{k,\eta} \lhk h_k\rhk_{k+1}+\oh_{N\to\infty}(1).
\end{equation}
Since $g_k(t)=t$, \eqref{eqn_conclusion_of_prop_slow_vdC_8_ne} follows from the hypothesis of the lemma once we have verified that $g_1,\ldots,g_k$ satisfy property~\ref{property_P_W} with $W(t)$ replaced by $V(t)$.
In other words, it remains to show that for all $f\in\Hsp(g_1,\ldots,g_k)$ and $p\in\R[t]$ either $|f(t)-p(t)|\ll 1$ or $|f(t)-p(t)| \succ \log(V(t))$.
Since  $g_1,\ldots,g_k$ have at most linear growth, all their derivatives are either asymptotically constant, or asymptotically negligible. Therefore, it suffices to show the property for $f\in\sps(g_1,\ldots,g_k)$ instead of $f\in\Hsp(g_1,\ldots,g_k)$.
This follows readily from the fact that $f_1,\ldots,f_k$ satisfy property~\ref{property_P_W} and $g_i=f_i\circ g^{-1}$ and $V=W\circ g^{-1}$.
\end{proof}

For the inductive step in the proof of \cref{prop_characteristic_factor_sublinear} we need the following version of van der Corput's Lemma.

\begin{Lemma}
\label{prop_vdC}
Let $p_1, p_2,p_3,\ldots$ be an eventually monotone sequence of positive real numbers.
Let $P_N\coloneqq \sum_{n=1}^N p_n$ and assume
$$
\lim_{N\to\infty} P_N\, =\, \infty
\qquad\text{and}\qquad
\lim_{N\to\infty}\frac{p_N}{P_N} \, =\, 0.
$$
Let $\Hilb$ be a Hilbert space and, for every $N\in\N$, let $u_N\colon\N\to\Hilb$ be a sequence bounded in norm by $1$.
Then
\begin{equation}
\label{eqn_prop_vdC_1}
\begin{split}
\limsup_{N\to\infty}\Bigg\|\frac1{P_N}\sum_{n=1}^N & p_nu_N(n)\Bigg\|^2
\\
&\,\leq\,\limsup_{H\to\infty}\limsup_{N\to\infty}\left|\frac{1}{H}\sum_{m=1}^H \frac1{P_N}\sum_{n=1}^N p_n\big\langle u_N(n+m),u_N(n)\big\rangle\right|.
\end{split}
\end{equation}
\end{Lemma}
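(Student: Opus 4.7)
The proof is a standard van der Corput argument, adapted to the weighted-average setting. The crux is a shift-invariance identity: for any fixed $m\in\N$ and any sequence $v_N\colon\N\to\Hilb$ with $\|v_N(n)\|\leq 1$,
$$\left\|\tfrac{1}{P_N}\sum_{n=1}^N p_n\bigl(v_N(n+m)-v_N(n)\bigr)\right\|=o_{N\to\infty}(1).$$
Reindexing the shifted sum via $n\mapsto n-m$ decomposes the difference into a main term $\tfrac{1}{P_N}\sum_{n=m+1}^N(p_{n-m}-p_n)v_N(n)$ plus two boundary pieces of combined norm $O(m\,p_{N+m}/P_N)\to0$ by the hypothesis $p_N/P_N\to0$. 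The main term has norm at most $\tfrac{1}{P_N}\sum_{n=m+1}^N|p_{n-m}-p_n|$, which, by eventual monotonicity of $(p_n)$, telescopes to $O(m\,p_N/P_N)+O(1/P_N)=o(1)$. In particular, the left-hand side of the claimed inequality equals $\|\tfrac{1}{P_N}\sum_n p_n u_N(n)\|^2+o_{N\to\infty}(1)$, so one may reduce to the ``unshifted'' case.

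Averaging the shift identity over $m=1,\ldots,H$ yields $\tfrac{1}{P_N}\sum_n p_n u_N(n)=\tfrac{1}{P_N}\sum_n p_n\bigl(\tfrac{1}{H}\sum_{m=1}^H u_N(n+m)\bigr)+o_{N\to\infty}(1)$. Squaring and applying Cauchy--Schwarz with respect to the probability measure $p_n/P_N$ gives
$$\left\|\tfrac{1}{P_N}\sum_n p_n u_N(n)\right\|^2\leq \tfrac{1}{P_N}\sum_n p_n\left\|\tfrac{1}{H}\sum_{m=1}^H u_N(n+m)\right\|^2+o_{N\to\infty}(1).$$
Expanding the inner norm-squared and reapplying the shift-invariance to the scalar sequence $\langle u_N(n+m),u_N(n+m')\rangle$ (shifting $n\mapsto n-m'$, which is legal since $|\langle u_N(n+m),u_N(n+m')\rangle|\leq1$), then splitting the diagonal $m=m'$ (which contributes $O(1/H)$) from the off-diagonal via conjugate symmetry, the right-hand side becomes
$$O(1/H)+\left|\tfrac{2}{H}\sum_{k=1}^{H-1}\bigl(1-\tfrac{k}{H}\bigr)\tfrac{1}{P_N}\sum_n p_n\bigl\langle u_N(n+k),u_N(n)\bigr\rangle\right|+o_{N\to\infty}(1).$$

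Finally, one must pass from these \Fejer{}-style averages to the \Cesaro{}-style averages appearing in the statement. Setting $a_k(N):=\tfrac{1}{P_N}\sum_n p_n\langle u_N(n+k),u_N(n)\rangle$ and $S_j(N):=\sum_{k=1}^j a_k(N)$, Abel summation gives the identity $\tfrac{2}{H}\sum_{k=1}^{H-1}(1-k/H)a_k(N)=\tfrac{2}{H^2}\sum_{j=1}^{H-1}S_j(N)$, and since $|a_k(N)|\leq1$ a straightforward weighted-average estimate (the weights $2j/H^2$ concentrate on $j\sim H$) shows
$$\limsup_{H\to\infty}\limsup_{N\to\infty}\left|\tfrac{2}{H^2}\sum_{j=1}^{H-1}S_j(N)\right|\leq \limsup_{H\to\infty}\limsup_{N\to\infty}\left|\tfrac{1}{H}\sum_{k=1}^H a_k(N)\right|,$$
which combined with the previous estimate yields \eqref{eqn_prop_vdC_1}. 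The main technical ingredient is the quantitative shift-invariance of the first paragraph: it uses both hypotheses, namely $p_N/P_N\to0$ (to control the boundary) and eventual monotonicity of $(p_n)$ (to telescope $\sum|p_{n-m}-p_n|$); the remainder follows the standard van der Corput template.
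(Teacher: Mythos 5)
Your proof is correct, and up to the Fej\'er-weighted stage it follows the same template as the paper (shift-invariance from eventual monotonicity and $p_N/P_N\to0$, averaging over shifts, Jensen with respect to the probability weights $p_n/P_N$, expanding the square and separating the diagonal). Where you genuinely diverge is the final passage from the \Fejer{}-type average $\frac2H\sum_{k<H}(1-k/H)a_k(N)$ to the \Cesaro{} average in \eqref{eqn_prop_vdC_1}. The paper first passes to a subsequence $(N_k)$ along which all correlation limits $\Psi(m)=\lim_k\frac1{P_{N_k}}\sum_n p_n\langle u_{N_k}(n+m),u_{N_k}(n)\rangle$ exist, observes that $\Psi$ is positive definite, and invokes the existence of the uniform \Cesaro{} mean of a positive definite sequence to conclude that the \Fejer{} and \Cesaro{} limits coincide. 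You instead use the Abel-summation identity $\frac2H\sum_{k<H}(1-k/H)a_k(N)=\frac{2}{H^2}\sum_{j<H}S_j(N)$ and a direct iterated-limsup comparison: given $\epsilon>0$ choose $j_0$ with $\limsup_N|S_j(N)/j|\leq L+\epsilon$ for $j\geq j_0$, bound the contribution of $j<j_0$ by $2j_0^2/H^2$, and let $H\to\infty$; this needs only $|a_k(N)|\leq1$ and subadditivity of $\limsup_N$ over the finite sum. Your route is more elementary and self-contained: it avoids the subsequence extraction (needed in the paper so that $\Psi$ is even well defined) and the positive-definiteness input, at the cost of obtaining only the one-sided inequality rather than the equality of \Fejer{} and \Cesaro{} limits -- which is all the lemma requires. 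Two cosmetic points: your boundary/telescoping bounds in the shift-invariance step are stated as $O(m\,p_{N+m}/P_N)$ and $O(m\,p_N/P_N)$, which for eventually decreasing $(p_n)$ should rather be phrased via $\max_{N-m<n\leq N}p_n/P_N\leq p_{N-m}/P_{N-m}\to0$ (still $o_{N\to\infty}(1)$ for fixed $m$, so nothing breaks); and the displayed expression after expanding the square should be an upper bound (via $2\Re z\leq2|z|$) rather than an identity, as indeed you only use it as such.
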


\begin{proof}
Pick $N_1<N_2<N_3<\ldots \in\N$ such that
$$
\limsup_{N\to\infty}\Bigg\|\frac1{P_N}\sum_{n=1}^N  p_nu_N(n)\Bigg\|~=~\lim_{k\to\infty}\Bigg\|\frac1{P_{N_k}}\sum_{n=1}^{N_k} p_n u_{N_k}(n)\Bigg\|.
$$
By further refining the subsequence $(N_k)_{k\in\N}$ if necessary, we can also assume that for all $m\in\N$ the limit
$$
\lim_{k\to\infty}\frac1{P_{N_k}}\sum_{n=1}^{N_k} p_n\big\langle u_{N_k}(n+m),u_{N_k}(n)\big\rangle
$$
exists. Then \eqref{eqn_prop_vdC_1} follows if we can show
\begin{equation}
\label{eqn_prop_vdC_1_2}
\begin{split}
\lim_{k\to\infty}\Bigg\|\frac1{P_{N_k}}\sum_{n=1}^{N_k} & p_n u_{N_k}(n)\Bigg\|^2
\\
&\,\leq\,\limsup_{H\to\infty}\lim_{k\to\infty}\left|\frac{1}{H}\sum_{m=1}^H \frac1{P_{N_k}}\sum_{n=1}^{N_k} p_n\big\langle u_{N_k}(n+m),u_{N_k}(n)\big\rangle\right|.
\end{split}
\end{equation}

First, observe that
\begin{eqnarray*}
\left\|\frac1{P_{N_k}}\sum_{n=1}^{N_k}p_nu_{N_k}(n)-\frac1{P_{N_k}}\sum_{n=1}^{N_k}p_nu_{N_k}(n+1)\right\|
&=&
\left\|\frac1{P_{N_k}}\sum_{n=1}^{N_k}p_nu_{N_k}(n)-\sum_{n=2}^{{N_k}+1}p_{n-1}u_{N_k}(n)\right\|
\\&\leq&
\frac 1{P_{N_k}}\sum_{n=2}^{{N_k}}|p_n-p_{n-1}|+\oh_{k\to\infty}(1)\end{eqnarray*}
Since $p_n$ is monotonic, the latter quantity can be bounded by
$$\frac 1{P_{N_k}}|p_{N_k}|+\oh_{k\to\infty}(1)=\oh_{k\to\infty}(1)$$
Iterating this observation and (\Cesaro{}) averaging we deduce that, for any $H\in\N$,
$$\lim_{k\to\infty}\left\|\frac1{P_{N_k}}\sum_{n=1}^{N_k}p_nu_{N_k}(n)-\frac{1}{H}\sum_{m=1}^H\frac1{P_{N_k}}\sum_{n=1}^{N_k}p_nu_{N_k}(n+m)\right\|=0.$$
In particular,
\begin{equation}
\label{eqn_prop_vdC_2.7}
\lim_{k\to\infty}\Bigg\|\frac1{P_{N_k}}\sum_{n=1}^{N_k}p_nu_{N_k}(n)\Bigg\|^2 \,=\,
\lim_{k\to\infty}\Bigg\|\frac{1}{H}\sum_{m=1}^H  \frac1{P_{N_k}}\sum_{n=1}^{N_k}p_nu_{N_k}(n+m)\Bigg\|^2.
\end{equation}
Using Jensen's inequality and expanding the square on the right hand side of \eqref{eqn_prop_vdC_2.7} leaves us with
\begin{equation*}
\begin{split}
\lim_{k\to\infty} & \Bigg\| \frac{1}{H} \sum_{m=1}^H \frac1{P_{N_k}}\sum_{n=1}^{N_k}p_nu_{N_k}(n+m)\Bigg\|^2
  \\&\leq
\lim_{k\to\infty}\frac1{P_{N_k}}\sum_{n=1}^{N_k}p_n\left\|\frac{1}{H}\sum_{m=1}^Hu_{N_k}(n+m)\right\|^2
  \\&=
\frac{1}{H^2}\sum_{m_1,m_2=1}^H \left(\lim_{k\to\infty}\frac1{P_{N_k}}\sum_{n=1}^{N_k}p_n\big\langle u_{N_k}(n+m_1),u_{N_k}(n+m_2)\big\rangle \right)
\\&=
\frac{1}{H^2}\hspace{-.3 em}\sum_{1\leq m_2<m_1\leq H}\hspace{-.3 em}2~\Re\left(\lim_{k\to\infty}\frac1{P_{N_k}}\sum_{n=1}^{N_k}p_n\big\langle u_{N_k}(n+m_1),u_{N_k}(n+m_2)\big\rangle\right)\,+\,\Oh\left(\frac{1}{H}\right)
\\&=
\frac{1}{H^2}\hspace{-.3 em}\sum_{1\leq m_2<m_1\leq H}\hspace{-.3 em}2~\Re\left(\lim_{k\to\infty}\frac1{P_{N_k}}\sum_{n=1}^{N_k}p_n\big\langle u_{N_k}(n+m_1-m_2),u_{N_k}(n)\big\rangle\right)\,+\,\Oh\left(\frac{1}{H}\right)
\\&=
\Re\left(\frac{1}{H}\sum_{m=1}^H \tfrac{2(H-m)}{H}\left(\lim_{k\to\infty}\frac1{P_{N_k}}\sum_{n=1}^{N_k}p_n\big\langle u_{N_k}(n+m),u_{N_k}(n)\big\rangle\right)\right)\,+\,\Oh\left(\frac{1}{H}\right)
\\&\leq
\left|\frac{1}{H}\sum_{m=1}^H \tfrac{2(H-m)}{H}\left(\lim_{k\to\infty}\frac1{P_{N_k}}\sum_{n=1}^{N_k}p_n\big\langle u_{N_k}(n+m),u_{N_k}(n)\big\rangle\right)\right|\,+\,\Oh\left(\frac{1}{H}\right)
\end{split}
\end{equation*}
Since the function
$$
\Psi(m)\,\coloneqq\,\lim_{k\to\infty}\frac1{P_{N_k}}\sum_{n=1}^{N_k}p_n\big\langle u_{N_k}(n+m),u_{N_k}(n)\big\rangle
$$
is positive definite, its \define{uniform \Cesaro{} average} exists, meaning that for any $(L_k)_{k\in\N},(M_k)_{k\in\N}\subset\N$ with $M_k-L_k\to\infty$ as $k\to\infty$ the limit
$$
\lim_{k\to\infty} \frac{1}{M_k-L_k}\sum_{m=L_k}^{M_k-1} \Psi(m)
$$
exists and equals
$$
\lim_{H\to\infty}\frac{1}{H}\sum_{m=1}^H\Psi(m).
$$
In particular, this means that
$$
\lim_{H\to\infty}\frac{1}{H}\sum_{m=1}^H \tfrac{2(H-m)}{H}\Psi(m)\,=\,\lim_{H\to\infty}\frac{1}{H}\sum_{m=1}^H\Psi(m).
$$
This shows that
\begin{equation*}
\begin{split}
\lim_{k\to\infty} \Bigg\| \frac{1}{H} \sum_{m=1}^H & \frac1{P_{N_k}}\sum_{n=1}^{N_k}p_nu_{N_k}(n+m)\Bigg\|^2
\\
&\,\leq\,\limsup_{H\to\infty}\limsup_{N\to\infty}\left|\frac{1}{H}\sum_{m=1}^H \frac1{P_N}\sum_{n=1}^N p_n\big\langle u_N(n+m),u_N(n)\big\rangle\right|,
\end{split}
\end{equation*}
which finishes the proof.
\end{proof}


\begin{proof}[Proof of \cref{prop_characteristic_factor_sublinear}]
We use induction on $k$. For the base case of this induction, which corresponds to $k=1$, we have to show that for any invertible measure preserving system $(X,\B,\mu,T)$, any $h_1\in L^\infty(X)$, and any function $f_1\in\Hardy$ with $|f_1(t)|\ll t$, we have
\begin{equation}
\label{eqn_characteristic_factor_sublinear_71}
\sup_{a\in\ell^\infty}~\left\|\frac{1}{W(N)}\sum_{n=1}^N w(n) a(n)T^{[f_1(n)]}h_1\right\|_{L^2}\leq\,C_1 \lhk h_1\rhk_2+\oh_{N\to\infty}(1),
\end{equation}
where the supremum is taken over all $a\in\ell^\infty(\N)$ with $\|a\|_{\ell^\infty}\leq 1$. In light of \cref{lem_changeofvariables}, we can assume without loss of generality that $f_1(t)\sim ct$ for some $c>0$.
Instead of taking the supremum in \eqref{eqn_characteristic_factor_sublinear_71}, it suffices to show that for any $a_1,a_2,\ldots\in\ell^\infty(\N)$ with $\|a_N\|_{\ell^\infty}\leq 1$ we have
\begin{equation}
\label{eqn_characteristic_factor_sublinear_71new}
\left\|\frac{1}{W(N)}\sum_{n=1}^N w(n) a_N(n)T^{[f_1(n)]}h_1\right\|_{L^2}\leq\,C_1 \lhk h_1\rhk_2+\oh_{N\to\infty}(1).
\end{equation}
Note that $\lim_{N\to\infty}w(N)/W(N)=0$ because $W(t)$ has sub-exponential growth. By applying \cref{prop_vdC} with $W(N)=P_N$, $w(n)=p_n$, and $u_N(n)=a_N(n)T^nh_1$, we see that \eqref{eqn_characteristic_factor_sublinear_71new} follows if we can show
\begin{equation}
\label{eqn_characteristic_factor_sublinear_72}
\begin{split}
\limsup_{H\to\infty}\,\limsup_{N\to\infty}~\frac{1}{H} \sum_{m=1}^H\Bigg| \frac{1}{W(N)}\sum_{n=1}^N & w(n)\, a_N(n+m)\overline{a}_N(n)
\\
&\int T^{[f_1(n+m)]-[f_1(n)]}h_1\cdot\overline{h}_1\d\mu\Bigg|\leq\,C_1 \lhk h_1\rhk_2^2.
\end{split}
\end{equation}
Since $f_1(t)\sim ct$, we have $[f_1(n+m)]-[f_1(n)]\in[cm]+\{-2,-1,0,1,2\}$ for all but finitely many $n\in\N$.
Define, for $\eta\in\{-2,-1,0,1,2\}$, the set $V_\eta\coloneqq\{n\in\N: [f_1(n+m)]-[f_1(n)]=[cm]+\eta\}$. Then, since $\bigcup_{\eta\in\{-2,-1,0,1,2\}}V_\eta$ is co-finite in $\N$, in place of \eqref{eqn_characteristic_factor_sublinear_72} it suffices to show that for all $\eta\in\{-2,-1,0,1,2\}$,
\begin{equation}
\label{eqn_characteristic_factor_sublinear_72_2}
\begin{split}
\limsup_{H\to\infty}\,\limsup_{N\to\infty}~\frac{1}{H} \sum_{m=1}^H\Bigg| \frac{1}{W(N)}\sum_{n=1}^N w(n)\, a_N(n+m) & \overline{a}_N(n)1_{V_\eta}(n)
\\
&\int T^{[cm]+\eta}h_1\cdot\overline{h}_1\d\mu\Bigg|\leq\,\frac{C_1}{5}\lhk h_1\rhk_2^2.
\end{split}
\end{equation}
We can rewrite \eqref{eqn_characteristic_factor_sublinear_72_2} as
\begin{equation*}
\begin{split}
\limsup_{H\to\infty}\,\limsup_{N\to\infty}~\frac{1}{H}\sum_{m=1}^H  \Bigg|\int T^{[cm]+\eta}h_1\cdot & \overline{h}_1 \d\mu \Bigg|
\\
&\left|\frac{1}{W(N)}\sum_{n=1}^N w(n) a_N(n+m)\overline{a}_N(n)1_{V_\eta}(n)\right|\leq\, \frac{C_1}{5}\lhk h_1\rhk_2^2,
\end{split}
\end{equation*}
which is implied by
\begin{equation}
\label{eqn_characteristic_factor_sublinear_73}
\lim_{H\to\infty}~\frac{1}{H}\sum_{m=1}^H \left|\int T^{[cm]+\eta}h_1\cdot\overline{h}_1   \d\mu\right|\leq\,\frac{C_1}{5}\lhk h_1\rhk_2^2.
\end{equation}
To see why \eqref{eqn_characteristic_factor_sublinear_73} holds, let $r(m)\coloneqq |\{n\in\N: [cn]+\eta=m\}|$ and observe that
\begin{equation*}
\lim_{H\to\infty}\frac{1}{H}\sum_{m=1}^H \left|\int T^{[cm]+\eta}h_1\cdot\overline{h}_1 \d\mu\right|
~=~
\lim_{H\to\infty}\frac{c}{H}\sum_{m=1}^H r(m) \left|\int T^{m}h_1\cdot\overline{h}_1   \d\mu\right|.
\end{equation*}
Since $r(m)\leq 1+c^{-1}$, we have
\begin{equation*}
\lim_{H\to\infty}\frac{c}{H}\sum_{m=1}^H r(m) \left|\int T^{m}h_1\cdot\overline{h}_1   \d\mu\right|
~\leq~
(c+1)\left(\lim_{H\to\infty}\frac{1}{H}\sum_{m=1}^H \left|\int T^{m}h_1\cdot\overline{h}_1\d\mu\right|\right).
\end{equation*}
But
\begin{equation*}
\lim_{H\to\infty}\frac{1}{H}\sum_{m=1}^H \left|\int T^{m}h_1\cdot\overline{h}_1\d\mu\right|
~\leq~\lhk h_1\rhk_2^2,
\end{equation*}
by the definition of $\lhk .\rhk_2$, and so \eqref{eqn_characteristic_factor_sublinear_73} holds with $C_1=5(c+1)$.

For the proof of the inductive step, assume \cref{prop_characteristic_factor_sublinear} has already been proven for $k-1$; we want to show
\begin{equation}
\label{eqn_characteristic_factor_sublinear_74}
\sup_{h_1,\ldots,h_{k-1}}~\sup_{a\in\ell^\infty}~\left\|\frac{1}{W(N)}\sum_{n=1}^N w(n)a(n)\prod_{i=1}^k T^{[f_i(n)]}h_i\right\|_{L^2}\leq\,C_k\lhk h_k\rhk_{k+1}+\oh_{N\to\infty}(1).
\end{equation}
Again, in light of \cref{lem_changeofvariables} we can assume $f_k(t)\sim ct$ for some $c>0$, and instead of having the suprema $\sup_{h_1,\ldots,h_{k-1}}$ and $\sup_{a\in\ell^\infty}$ in \eqref{eqn_characteristic_factor_sublinear_74}, it suffices to show that for any $h_{1,N},\ldots,h_{k-1,N}\in L^\infty (X)$ with $\|h_{i,N}\|_{L^\infty}\leq 1$ and any $a_N\in\ell^\infty(\N)$ with $\|a_N\|_{\ell^\infty}\leq 1$ we have
\begin{equation}
\label{eqn_characteristic_factor_sublinear_75}
\left\|\frac{1}{W(N)}\sum_{n=1}^N w(n) a_N(n)\prod_{i=1}^k T^{[f_i(n)]}h_{i,N}\right\|_{L^2}\leq\,C_k\lhk h_k\rhk_{k+1}+\oh_{N\to\infty}(1),
\end{equation}
where, for convenience, we took $h_{k,N}=h_k$ for all $N\in\N$.
We apply \cref{prop_vdC} once more, this time with $u_N(n)=a_N(n)\prod_{i=1}^k T^{[f_i(n)]}h_{i,N}$, and deduce that \eqref{eqn_characteristic_factor_sublinear_75} holds if
\begin{equation}
\label{eqn_characteristic_factor_sublinear_76}
\begin{split}
\limsup_{H\to\infty}\limsup_{N\to\infty}\frac{1}{H}\sum_{m=1}^H\Bigg|\frac{1}{W(N)} & \sum_{n=1}^N  w(n) a_N(n+m)\overline{a}_N(n)
\\
&\int \prod_{i=1}^k T^{[f_i(n+m)]}h_{i,N}\cdot T^{[f_i(n)]}\overline{h}_{i,N}  \d\mu\Bigg|\leq\, C_k\lhk h_k\rhk_{k+1}^2.
\end{split}
\end{equation}
Next, we write \eqref{eqn_characteristic_factor_sublinear_76} as
\begin{equation}
\label{eqn_characteristic_factor_sublinear_77}
\begin{split}
\limsup_{H\to\infty}\limsup_{N\to\infty}~ & \frac{1}{H}\sum_{m=1}^H\Bigg| \frac{1}{W(N)}  \sum_{n=1}^N w(n) a_N(n+m)\overline{a}_N(n)
\\
&\int \prod_{i=1}^k T^{[f_i(n)]-[f_1(n)]}\Big(T^{[f_i(n+m)]-[f_i(n)]}h_{i,N}\cdot\overline{h}_{i,N}\Big) \d\mu\Bigg|\leq\, C_k\lhk h_k\rhk_{k+1}^2.
\end{split}
\end{equation}
Define $c_i\coloneqq \lim_{t\to\infty}f_i(t)/t$. Then, arguing as above, we have
$$
[f_i(n+m)]-[f_i(n)]\,\in\, [c_i m]+\{-2,-1,0,1,2\}.
$$
Moreover,
$$
[f_i(n)]-[f_1(n)] \, \in\,  [f_i(n)-f_1(n)]+\{-1,0,1\}.
$$
Thus, if we define for $\eta=(\eta_1,\ldots,\eta_k)\in\{-2,-1,0,1,2\}^{k}$ and $\kappa=(\kappa_1,\ldots,\kappa_k)\in\{-1,0,1\}$ the set
$$
V_{\eta,\kappa}\coloneqq\{n\in\N: [f_i(n+m)]-[f_i(n)]=[c_i m]+\eta_i,~[f_i(n)]-[f_1(n)]=[f_i(n)-f_1(n)]+\kappa_i\},
$$
then the union $\bigcup_{\eta\in\{-2,-1,0,1,2\}^{k}}\bigcup_{\kappa\in\{-1,0,1\}^{k}} V_{\eta,\kappa}$ is co-finite in $\N$.
Therefore, to establish \eqref{eqn_characteristic_factor_sublinear_77}, it suffices to show that for every $\eta\in\{-2,-1,0,1,2\}^{k}$ and $\kappa\in\{-1,0,1\}^{k}$ we have
\begin{equation}
\label{eqn_characteristic_factor_sublinear_78}
\begin{split}
\limsup_{H\to\infty}\limsup_{N\to\infty}~ & \frac{1}{H}\sum_{m=1}^H\Bigg| \frac{1}{W(N)}  \sum_{n=1}^N w(n) a_N(n+m)\overline{a}_N(n) 1_{V_{\eta,\kappa}}(n)
\\
&\int \prod_{i=1}^k T^{[f_i(n)-f_1(n)]+\kappa_i}\Big(T^{[c_im]+\eta_i}h_{i,N}\cdot\overline{h}_{i,N}\Big) \d\mu\Bigg|\leq\, \frac{C_k}{5^k3^k}\lhk h_k\rhk_{k+1}^2.
\end{split}
\end{equation}
Set
$$
\tilde{f}_{i-1}(t)\coloneqq f_{i}(t)-f_1(t)\quad\text{and}\quad
\tilde{h}_{i-1,m,N}\coloneqq T^{\kappa_i}\Big(T^{[c_im]+\eta_i}h_{i,N}\cdot\overline{h}_{i,N}\Big),~ i=2,\ldots,k,
$$
and take $\tilde{a}_{m,N}(n)\coloneqq a_N(n+m)\overline{a}_N(n) 1_{V_{\eta,\kappa}}(n)$. Then \eqref{eqn_characteristic_factor_sublinear_78} is implied by
\begin{equation}
\label{eqn_characteristic_factor_sublinear_79}
\limsup_{H\to\infty}\limsup_{N\to\infty} \frac{1}{H}\sum_{m=1}^H\left\|\frac{1}{W(N)}\sum_{n=1}^N w(n) \tilde{a}_{m,N}(n)\prod_{i=1}^{k-1} T^{[\tilde{f}_i(n)]}\tilde{h}_{i,m,N}\right\|_{L^2}\leq\, \frac{C_k}{5^k3^k}\lhk h_k\rhk_{k+1}^2.
\end{equation}
By the induction hypothesis, for each $m\in\N$ we have the estimate
$$
\limsup_{N\to\infty}\left\|\frac{1}{W(N)}\sum_{n=1}^N w(n) \tilde{a}_{m,N}(n)\prod_{i=1}^{k-1} T^{[\tilde{f}_i(n)]}\tilde{h}_{i,m,N}\right\|_{L^2}\leq\, C_{k-1}\lhk T^{[c_km]+\eta_k}h_k\cdot\overline{h}_k\rhk_{k}.
$$
Hence, \eqref{eqn_characteristic_factor_sublinear_79} follows from
$$
\limsup_{H\to\infty}\frac{1}{H}\sum_{m=1}^H C_{k-1}\lhk T^{[c_km]+\eta_k}h_k\cdot\overline{h}_k\rhk_{k}\leq\, \frac{C_k}{5^k3^k}\lhk h_k\rhk_{k+1}^2,
$$
which holds for $C_k\coloneqq(c_k+1) 5^k3^k C_{k-1}$ and can be proved similarly to \eqref{eqn_characteristic_factor_sublinear_73}.
\end{proof}

\subsection{The general case of \cref{prop_characteristic_factor_main}}

The induction that we will use to prove \cref{prop_characteristic_factor_main} is similar to the PET-induction scheme utilized in \cite{Bergelson87}.
Given a function $f\in\Hardy$ of polynomial growth, we call the smallest $d\in\N$ for which $|f(t)|\ll t^d$ the \define{degree} of $f$, and denote it by $\deg(f)$.
Consider the relation $f\sim_{\operatorname{PET}} g$ if and only if $\deg f=\deg g>\deg(f-g)$ and note that $\sim_{\operatorname{PET}}$ defines an equivalence relation on $\Hardy$.

Now given a finite collection $\F=\{f_1,\ldots,f_k\}$ of functions of polynomial growth from a Hardy field $\Hardy$, let $d_{\max}$ be a number that is bigger or equal than the degree of any function in $\F$.
Also, for each $d\in\{1,\dots,d_{\max}\}$, let $m_d$ denote the number of equivalence classes of the set $\{f\in\F: \deg(f)=d\}$ with respect to the equivalence relation $\sim_{\operatorname{PET}}$.
The vector $(m_1,\dots,m_{d_{\max}})$ is called the \define{characteristic vector} of $\F$.
We order characteristic vectors by letting $(m_1,\dots,m_{d_{\max}})<(\tilde m_1,\dots,\tilde m_{d_{\max}})$ if the maximum $j$ for which $m_j\neq\tilde m_j$ satisfies $m_j<\tilde m_j$.

\begin{proof}[Proof of {\cref{prop_characteristic_factor_main}}]
We prove the theorem by induction on the characteristic vector of $\F=\{f_1,\dots,f_k\}$.
The case when $d_{\max}=1$ (i.e., all functions in $\F$ have degree $1$) follows from \cref{prop_characteristic_factor_sublinear}. We can therefore assume that $f_k$ has degree at least $2$ and that the theorem has been proved for all families whose characteristic vector is strictly smaller than that of $\F$.

After reordering, if necessary, we fall into one of the following two cases: either all functions in $\F$ are equivalent or $f_1\not\sim_{\operatorname{PET}} f_k$ (while keeping $|f_1(t)|\ll \ldots \ll |f_k(t)|$).

Our goal is to show
\begin{equation*}
\limsup_{N\to\infty}~\sup_{h_1,\ldots,h_{k-1}\in L^\infty}~\sup_{a\in\ell^\infty}~\left\|\frac{1}{W(N)}\sum_{n=1}^N w(n)a(n)\prod_{i=1}^k T^{[f_i(n)]}h_i\right\|_{L^2}= \,0,
\end{equation*}
where the suprema are taken over all functions $h_1,\ldots,h_{k-1}\in L^\infty (X)$ with $\|h_i\|_{L^\infty}\leq 1$ and all $a\in\ell^\infty(\N)$ with $\|a\|_{\ell^\infty}\leq 1$. As we did in the proof of \cref{prop_characteristic_factor_sublinear} in the previous subsection, instead of asking for the suprema over $h_1,\ldots,h_{k-1}$ and $a$, it suffices to show that for any $h_{1,N},\ldots,h_{k-1,N}\in L^\infty (X)$ with $\|h_{i,N}\|_{L^\infty}\leq 1$ and any $a_N\in\ell^\infty(\N)$ with $\|a\|_{\ell^\infty}\leq 1$ we have
\begin{equation}
\label{eqn_characteristic_factor_main_1-1}
\limsup_{N\to\infty}~\left\|\frac{1}{W(N)}\sum_{n=1}^N w(n)a_N(n)\prod_{i=1}^k T^{[f_i(n)]}h_{i,N}\right\|_{L^2}= \,0,
\end{equation}
where, for convenience, we took $h_{k,N}=h_k$ for all $N\in\N$.
Then we use \cref{prop_vdC} with $u_N(n)=a_N(n)\prod_{i=0}^k T^{[f_i(n)]}h_{i,N}$ and conclude that in order to establish \eqref{eqn_characteristic_factor_main_1-1} it suffices to show that for every $m\in\N$
\begin{equation}
\label{eqn_characteristic_factor_main_1-2}
\begin{split}
\lim_{N\to\infty}\Bigg|\frac{1}{W(N)} & \sum_{n=1}^N  w(n) a_N(n+m)\overline{a}_N(n)
\\
&\int \prod_{i=1}^k T^{[f_i(n+m)]}h_{i,N}\cdot T^{[f_i(n)]}\overline{h}_{i,N}  \d\mu\Bigg|=\, 0.
\end{split}
\end{equation}
Note that $[f_i(n+m)]-[f_{1}(n)]=[f_i(n+m)- f_{1}(n)]+e_{i,n}$ where $e_{i,n}\in\{-1,0,1\}$ and $[f_i(n)]-[f_{1}(n)]=[f_i(n)- f_{1}(n)]+\tilde e_{i,n}$ where $\tilde e_{i,n}\in\{-1,0,1\}$.
For each vector $v=(v_1,\dots,v_k,\tilde v_1,\dots\tilde v_k)\in\{-1,0,1\}^{2k}$, let $A_v$ be the set of $n$'s for which
$$
(e_{1,n},\dots,e_{k,n},\tilde e_{1,n},\dots\tilde e_{k,n})=v.
$$
Since $\N=\bigcup_{v\in\{-1,0,1\}^{2k}} A_v$, instead of \eqref{eqn_characteristic_factor_main_1-2} it suffices to show that for every $v\in\{-1,0,1\}^{2k}$ we have
\begin{equation}
\label{eqn_characteristic_factor_main_1-3}
\begin{split}
\lim_{N\to\infty}\Bigg|\frac{1}{W(N)} & \sum_{n=1}^N  w(n) a_N(n+m)\overline{a}_N(n) 1_{A_v}(n)
\\
&\int \prod_{i=1}^k T^{[f_i(n+m)]}h_{i,N}\cdot T^{[f_i(n)]}\overline{h}_{i,N}  \d\mu\Bigg|=\, 0.
\end{split}
\end{equation}
But for $n\in A_v$ we have
\begin{equation*}
\begin{split}
\int \prod_{i=1}^k T^{[f_i(n+m)]} & h_{i,N}\cdot T^{[f_i(n)]}\overline{h}_{i,N}  \d\mu
\\
&=\int \prod_{i=1}^k T^{[f_i(n+m)]-[f_1(n)]}h_{i,N}\cdot T^{[f_i(n)]-[f_1(n)]}\overline{h}_{i,N}  \d\mu
\\
&=\int \prod_{i=1}^k T^{[f_i(n+m)-f_1(n)]+v_i}h_{i,N}\cdot T^{[f_i(n)-f_1(n)]+\tilde v_i}\overline{h}_{i,N}  \d\mu.
\end{split}
\end{equation*}
Hence \eqref{eqn_characteristic_factor_main_1-3} is the same as
\begin{equation}
\label{eqn_characteristic_factor_main_1-4}
\begin{split}
\lim_{N\to\infty}&  \Bigg|\frac{1}{W(N)} \sum_{n=1}^N  w(n) a_N(n+m)\overline{a}_N(n) 1_{A_v}(n)
\\
&\int \prod_{i=1}^k T^{[f_i(n+m)-f_1(n)]+v_i}h_{i,N}\cdot T^{[f_i(n)-f_1(n)]+\tilde v_i}\overline{h}_{i,N}  \d\mu\Bigg|=\, 0.
\end{split}
\end{equation}
Now let
$$
\tilde f_{i,m}(n)\coloneqq f_i(n)-f_{1}(n) \quad\text{and}\quad\tilde h_{i,N}\coloneqq T^{\tilde v_i}h_{i,N}\qquad\text{for each }i\in\{1,\dots,k\},
$$
and
$$
\tilde f_{k+i,m}(n)\coloneqq f_i(n+m)-f_{1}(n) \quad\text{and}\quad\tilde h_{k+i,N}\coloneqq T^{ v_i}\overline{h}_{i,N}\qquad\text{for each }i\in\{1,\dots,k\}.
$$
Also, set $\tilde{a}_N(n)\coloneqq a_N(n+m)\overline{a}_N(n) 1_{A_v}(n)$.
With this notation, \eqref{eqn_characteristic_factor_main_1-4} can be written as
\begin{equation}
\label{eqn_characteristic_factor_main_1-5}
\lim_{N\to\infty}\Bigg|\frac{1}{W(N)} \sum_{n=1}^N  w(n) \tilde{a}_N(n)\int \prod_{i=1}^{2k} T^{[\tilde{f}_{i,m}(n)]}\tilde{h}_{i,N} \d\mu\Bigg|=\, 0.
\end{equation}
Observe that $\lhk\tilde h_{2k,N}\rhk_s=\lhk h_k\rhk_s=0$.
Also, since $|f_i(t)|\ll |f_k(t)|$ for all $i=1,\ldots,k-1$, it follows that $|\tilde f_{i,m}(t)|\ll|\tilde f_{2k,m}(t)|$ for all $i=1,\ldots,2k-1$.
Moreover, since the degree of $f_k$ is at least $2$, the degree of $\tilde f_{2k,m}$ is at least $1$.
Thus \eqref{eqn_characteristic_factor_main_1-5} will follow by induction after we show that the characteristic vector $(\tilde m_1,\dots,\tilde m_\ell)$ of $\tilde \F=\{\tilde{f}_{1,m},\ldots,\tilde{f}_{2k,m}\}$ is strictly smaller than the characteristic vector $(m_1,\dots,m_\ell)$ of $\F=\{f_1,\ldots,f_k\}$.

Indeed, for each $f_i$ which is not equivalent to $f_{1}$, the functions $\tilde f_{i,m}$ and $\tilde f_{k+i,m}$ are equivalent to each other.
Moreover, if $f_i(t)$ and $f_j(t)$ are not equivalent to $f_{1}(t)$, then $\tilde f_{i,m}$ is equivalent to $\tilde f_{j,m}$ if and only if $f_i(t)$ is equivalent to $f_j(t)$.
Letting $d$ be the degree of $f_{1}(t)$, this shows that $\tilde m_j=m_j$ for all $j>d$.
Finally, if $f_i(t)$ is equivalent to $f_{1}(t)$, then both $f_i(t)-f_{1}(t)$ and $f_{i}(t+m)-f_{1}(t)$ have degree smaller than that of $f_i(t)$. This shows that $\tilde m_d<m_d$.
Therefore $(\tilde m_1,\dots,\tilde m_\ell)$ is strictly smaller than $(m_1,\dots,m_\ell)$ and we are done.
\end{proof}

\section{Proof of Theorem~\ref{thm_main}}
\label{sec_nilfac}
In this section we give the proof of Theorem~\ref{thm_main}. 
The first step is to use \cref{prop_characteristic_factor_main}, which was proved in the previous section, 
together with the structure theory of Host-Kra to reduce Theorem~\ref{thm_main} to the following special case:

\begin{Theorem}
\label{thm_F}
Theorem~\ref{thm_main} holds when $(X,\B,\mu,T)$ is an ergodic nilsystem.
\end{Theorem}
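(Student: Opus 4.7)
The plan is to reduce the statement to a joint equidistribution problem on a product nilmanifold, and then invoke the Hardy-field equidistribution theory from \cite{Richter20arXiv}. First I would apply \cref{prop_malcev} to pass to an ergodic nilsystem $(Y,\mu_Y,S)$ on $Y=G/\Gamma$ with $G$ connected and simply connected, of which $(X,T)$ is a subsystem. The three conclusions of \cref{thm_main} descend from $(Y,S)$ to $(X,T)$ via the natural factor/inclusion maps and standard conditional-integral manipulations, so it suffices to prove them on $Y$. The gain is that the element $a\in G$ defining $S$ lies in $G^\circ$; in particular $a^t$ is defined for every $t\in\R$, which makes $a^{f_i(n)}$ sensible and allows us to separate the integer and fractional parts of $f_i(n)$.

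Next, by density of $C(Y)$ in $\Ltwo(Y)$ and uniform boundedness of the averages, I would reduce to the case where each $h_i$ is continuous. Writing $[f_i(n)]=f_i(n)-\{f_i(n)\}_c$, with $\{t\}_c\in[-\tfrac12,\tfrac12)$ the deviation from the nearest integer, the summand $\prod_{i=1}^k T^{[f_i(n)]}h_i(y)$ equals $F\bigl(\Phi(n,y)\bigr)$ for a continuous function $F$ on $Y^k\times\T^k$ and the sequence
\[
\Phi(n,y)\coloneqq\bigl(a^{f_1(n)}y,\dots,a^{f_k(n)}y,\,f_1(n)\bmod 1,\dots,f_k(n)\bmod 1\bigr).
\]
The crux is to prove that, under Property \ref{property_P_W} and the polynomial-growth assumption, $\Phi(n,y)$ is uniformly distributed on an explicit sub-nilmanifold $Z_y\subset Y^k\times\T^k$ with respect to $W$-averages. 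This is a joint strengthening of \cref{prop_Bosh_W-averages}, and its nilmanifold version is exactly the kind of result assembled in \cite{Richter20arXiv}. Granting this, part \ref{itm_thm_main_i} of \cref{thm_main} is immediate: the $L^2$-limit of the averages equals $y\mapsto\int_{Z_y} F\,\d\mu_{Z_y}$.

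For the other two parts I would analyze $Z_y$ under each of the two conditions of \cref{thm_main_combinatorial}. Under condition \ref{itm_cond_2}, the combination of \cref{lem_simple_normal_form_new_new} and \cref{Cor_2claims} shows that the fractional parts equidistribute on $\T^k$ and, simultaneously, push the $Y^k$-orbit closure to be as large as ergodicity allows. The limiting integral then factors as a product of orthogonal projections, giving $\prod_i h_i^*$ and hence $\mu(A)^{k+1}$ when each $h_i=1_A$. Under condition \ref{itm_cond_1}, the polynomial span of the $f_i$ lies in $\sps(q_1,\dots,q_\ell)$ for jointly intersective $q_j\in\Z[t]$; restricting $n$ to an arithmetic progression on which all the $q_j$ vanish modulo a common $m$ brings the relevant translations on $Y^k$ close to the identity, and the equidistribution on $Z_y$ then produces a strictly positive contribution to the integral of $1_A\otimes\cdots\otimes 1_A$, yielding \eqref{eqn_multiple_recurrence_floor}.

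The main obstacle I expect is the joint $W$-average equidistribution of $\Phi(n,y)$ on the correct $Z_y$, rather than equidistribution of each coordinate in isolation. This requires identifying $Z_y$ via the rational subgroups arising from both the $G$-action on $Y^k$ and the $\T^k$ factor, and running a joint van der Corput / PET-type recursion compatible with the weights $w(n)$, in the spirit of the argument in \cref{sec_charfac} but at the level of orbit closures on the nilmanifold rather than characteristic factors in $\Ltwo$. The single-function, unweighted pieces are in \cite{Richter20arXiv}; the work lies in packaging them into a joint weighted statement that plugs directly into the three conclusions of \cref{thm_main}.
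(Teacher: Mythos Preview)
Your broad strategy---embed into a nilmanifold $G/\Gamma$ with $G$ connected and simply connected, reduce to continuous $h_i$, and recast the averages as an equidistribution problem on $Y^k\times\T^k$---is the paper's route. However, there is a genuine gap at the step where you assert that the summand equals $F(\Phi(n,y))$ for a \emph{continuous} $F$. The map $(z,t)\mapsto a^{-\{t\}_c}z$ from $Y\times\T$ to $Y$ has a jump discontinuity along $\{t=\tfrac12\}$, so $F$ is only piecewise continuous. Equidistribution of $\Phi(n,y)$ with respect to some $\mu_{Z_y}$ therefore yields the desired limit only when $\mu_{Z_y}$ gives zero mass to the discontinuity locus. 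This fails precisely when some $f_i$ is asymptotic to a polynomial $p_i$ with $p_i-p_i(0)\in\Q[t]$ whose fractional part limits to $\tfrac12$; the orbit then accumulates on the jump set and the push-forward measure is shifted by a nontrivial element of $G^k$ (see \cref{example_annoying1}). The paper handles this in two stages: first (\cref{thm_norationalpolynomials}, via \cref{lemma_WaveragesoverAPs}) it passes to arithmetic subprogressions so that any such $p_i$ satisfies $p_i-p_i(0)\in\Z[t]$; then (proof of \cref{thm_lastuniformdistribution} from \cref{thm_nilequidistribution_2_1_new}) it splits into a Case~I where the discontinuity set is null and a Case~II where the orbit sits exactly on it, correcting by an explicit translation. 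Without this analysis your argument does not establish existence of the limit in part~\ref{itm_thm_main_i}, let alone identify it.

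A secondary point: \cref{prop_malcev} does not produce an ergodic $\Z$-system $(Y,S)$; only the induced $\R$-flow is ergodic. The paper's \cref{thm_connectedsimplyconnected} is accordingly stated without ergodicity, and descending part~\ref{itm_thm_main_ii} back to the original $(X,T)$ requires writing $\mu_Z$ as an average $\tfrac1c\int_0^cS^t_*\mu_X\,dt$ and building from $A$ a set $B\subset Z$ with $S^{-t}B\cap X=A$---this is carried out in the proof that \cref{thm_connectedsimplyconnected} implies \cref{thm_F}. Your phrase ``descend \ldots\ via the natural factor/inclusion maps and standard conditional-integral manipulations'' does not cover this, and your later invocation of ergodicity (``as large as ergodicity allows'') is not available on $Y$.
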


A measure preserving system is called an \define{$s$-step pro-nilsystem} if it is (isomorphic in the category of measure preserving systems to) an inverse limit of $s$-step nilsystems.
In other words, $(X,\mathcal{B},\mu,T)$ is an $s$-step pro-nilsystem if there exist $T$-invariant $\sigma$-algebras ${\mathcal B}_1\subset{\mathcal B_2}\subset\cdots\subset{\mathcal B}$ such that ${\mathcal B}=\bigcup {\mathcal B}_n$ and for all $n$, the system $(X,{\mathcal B}_n,\mu,T)$ is isomorphic to an $s$-step nilsystem.
The following result from \cite{HK05a} gives the relation between the uniformity seminorms and pro-nilsystems.
\begin{Theorem}[{\cite{HK05a}}]\label{thm_hk}
  Let $(X,\mathcal{B},\mu,T)$ be an ergodic system. For each $s\in\N$ there exists a factor ${\mathcal Z}_s\subset{\mathcal B}$ with the following properties:
\begin{enumerate}
  \item The measure preserving system $(X,{\mathcal Z}_s,\mu,T)$ is an $s$-step pro-nilsystem;
  \item a function $h\in L^\infty$ is measurable with respect to ${\mathcal Z}_s$ if and only if it belongs to the set
$\displaystyle \left\{h\in L^\infty(X):\int_Xh\cdot h'\d\mu=0\quad\forall h'\in L^\infty(X,{\mathcal B},\mu)\text{ with }\lhk h'\rhk_{s+1}=0 \right\}$.
\end{enumerate}
\end{Theorem}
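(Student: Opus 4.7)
The result is the main structure theorem of Host–Kra in \cite{HK05a}, so my plan would be to follow their two-step strategy: first introduce the $2^s$-dimensional cubic measures that give an integral representation of the seminorms $\lhk\cdot\rhk_s$, and then identify the resulting factor with a pro-nilsystem.

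As a first step, I would define recursively the Host–Kra measures $\mu^{[s]}$ on $X^{2^s}$. Set $\mu^{[0]}=\mu$ and let $\mu^{[s+1]}$ be the relatively independent product of two copies of $\mu^{[s]}$ over the sub-$\sigma$-algebra of sets invariant under the diagonal transformation $T^{[s]}=T\times\cdots\times T$ on $X^{2^s}$. A direct unfolding of the definitions gives the integral representation
$$\lhk h\rhk_s^{2^s}\,=\,\int\prod_{\epsilon\in\{0,1\}^s}h(x_\epsilon)\d\mu^{[s]}(x),$$
and from this representation one verifies the inductive formula $\lhk h\rhk_s^{2^s}=\lim_N\frac{1}{N}\sum_{n=1}^N\lhk\bar h\cdot T^nh\rhk_{s-1}^{2^{s-1}}$ stated earlier, as well as the fact that $\lhk\cdot\rhk_s$ is indeed a seminorm on $L^\infty$.

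Next, I would define the candidate factor $\mathcal{Z}_s$ as the sub-$\sigma$-algebra generated by those bounded functions which are not annihilated by $\lhk\cdot\rhk_{s+1}$. The fact that the annihilator of $\lhk\cdot\rhk_{s+1}$ really is the orthogonal complement of a sub-$\sigma$-algebra, rather than merely a closed subspace, follows from the Gowers–Cauchy–Schwarz inequality $|\langle h, h'\rangle|\leq\lhk h\rhk_{s+1}\lhk h'\rhk_{s+1}$ applied within appropriate ergodic decompositions; modulo this, property (2) then holds by definition via duality in $L^2$.

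The hard part, and the bulk of the argument in \cite{HK05a}, is property (1): that $(X,\mathcal{Z}_s,\mu,T)$ is an inverse limit of $s$-step nilsystems. I would argue by induction on $s$. The case $s=1$ identifies $\mathcal{Z}_1$ with the Kronecker factor, which is a group rotation and thus a $1$-step nilsystem. For the inductive step, one presents $\mathcal{Z}_s$ as an isometric abelian extension of $\mathcal{Z}_{s-1}$ and shows that the defining cocycle satisfies a Conze–Lesigne type functional equation forced by the invariance properties of $\mu^{[s+1]}$. That equation pins the cocycle down tightly enough to realize each ergodic component of the extension as a quotient of a concrete $s$-step nilsystem, and then $\mathcal{Z}_s$ itself appears as the inverse limit along an increasing family of such quotients. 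This Conze–Lesigne analysis (and the extraction of the nilpotent Lie group from the solution of the functional equation) is where I would expect essentially all the technical difficulty to lie; once it is available, the remainder of the proof—stitching the extensions together and verifying compatibility with the seminorm description—is bookkeeping.
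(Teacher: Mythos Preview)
The paper does not prove this theorem at all: it is stated as a citation of \cite{HK05a} and used as a black box in the proof of \cref{thm_main}. Your proposal is a reasonable high-level outline of the original Host--Kra argument (cubic measures, the seminorm characterization of $\mathcal{Z}_s$, and the inductive Conze--Lesigne analysis identifying $\mathcal{Z}_s$ as an inverse limit of $s$-step nilsystems), so in that sense it is consistent with what the cited reference does, but there is nothing in the present paper to compare it against.
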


\begin{proof}[Proof of Theorem~\ref{thm_main}]

Let $\Hardy$ be a Hardy field, let $f_1,\ldots,f_k\in\Hardy$ be of polynomial growth.
Let $W\in\Hardy$ be compatible with $f_1,\dots,f_k$, in the sense that Property \ref{property_P_W} holds.
Let $(X,\B,\mu,T)$ be an invertible measure preserving and let $h_1,\dots,h_k\in L^\infty(X)$ and $A\in\B$ with $\mu(A)>0$.
Using a usual ergodic decomposition argument, we may assume without loss of generality that the system is ergodic.

If $\lim_{t\to\infty}f_i(t)$ is finite for some $f_i$, then neither condition of Theorem~\ref{thm_main_combinatorial} holds so parts \ref{itm_thm_main_ii} and \ref{itm_thm_main_iii} hold vacuously. As every $f_i\in\Hardy$ is eventually monotone, this also implies that $[f_i(n)]$ is eventually constant, and hence part \ref{itm_thm_main_i} holds if it holds when $f_i$ is removed from $\{f_1,\dots,f_k\}$. Iterating this observation we may assume that $\lim_{t\to\infty}|f_i(t)|=\infty$ for every $i$.
A similar argument shows that we can assume that $\lim_{t\to\infty}|f_i(t)-f_j(t)|=\infty$ for every $i\neq j$.

Let $s\in\N$ be given by \cref{prop_characteristic_factor_main} and let ${\mathcal Z}_s$ be given by \cref{thm_hk}.
In view of Theorems \ref{prop_characteristic_factor_main} and \ref{thm_hk} we can assume that all the functions $h_i$ are measurable in ${\mathcal Z}_s$, which is equivalent to assuming that $(X,\B,\mu,T)$ is itself an $s$-step pro-nilsystem.
Therefore there exists a sequence of $\sigma$-algebras $\B_1\subset\B_2\subset\cdots\subset\B$ such that for each $\ell\in\N$ the system $(X,\B_\ell,\mu,T)$ is an ergodic nilsystem.

It now follows from \cref{thm_F} that the conclusions \ref{itm_thm_main_i} and \ref{itm_thm_main_iii} of Theorem~\ref{thm_main} hold after replacing each $h_i$ with $\E[h_i\mid\B_\ell]$, and that
$$\lim_{N\to\infty}\frac{1}{W(N)}\sum_{n=1}^N w(n) \int_XH_\ell\cdot T^{[ f_1(n) ]}H_\ell\cdots T^{[f_k(n)]}H_\ell\,>\,0$$
where $H_\ell=\E[1_A\mid\B_\ell]$ and $\E[\cdot\mid\B_\ell]$ denotes the conditional expectation on the $\sigma$-subalgebra $\B_\ell$.
Since $\E[h_i\mid\B_\ell]\to h_i$ and $H_\ell\to1_A$ in $L^p$ for every $p<\infty$ as $\ell\to\infty$, we conclude that the conclusions of Theorem~\ref{thm_main} also hold for $h_1,\dots,h_k$ and $A$.
\end{proof}

The rest of this section is devoted to proving \cref{thm_F}.

\paragraph{\bf Outline of the proof of \cref{thm_F}:}

The first step in the proof of \cref{thm_F} is a reduction to the case when the nilmanifold $X$ is of the form $X=G/\Gamma$ for a connected and simply connected nilpotent Lie group $G$.
These conditions on $G$ ensure that the nilsystem is embedable into a flow (i.e. a $\R$-action) on the same nilmanifold.
This reduction is achieved at the price of losing ergodicity of the $\Z$-action; nevertheless, the $\R$-flow is ergodic.
Then another reduction is performed, to avoid local obstructions.

These reductions allow us to reformulate \cref{thm_F} as a statement about uniform distribution of certain sequences in a nilmanifold, which
we prove by drawing from recent results obtained in \cite{Richter20arXiv}.

\subsection{Reducing to connected and simply connected groups}
The first step in the proof of \cref{thm_F} is essentially a reduction to the case when $X$ is a nilmanifold $G/\Gamma$ where the corresponding nilpotent Lie group $G$ is connected and simply connected.
This is a somewhat technical and by now standard reduction but which is crucial in order to deal with the rounding function and to use the equidistribution results from \cite{Richter20arXiv}.
It turns out that the additional structure of nilsystems allows us to upgrade $L^2$ convergence into pointwise convergence.
We now state this case of \cref{thm_F} as a separate theorem.

\begin{Theorem}
\label{thm_connectedsimplyconnected}
Let $f_1,\ldots,f_k$ be functions of polynomial growth from a Hardy field $\Hardy$ and let $W\in\Hardy$ be any function
with $1\prec W(t)\ll t$ and such that $f_1,\ldots,f_k$ satisfy Property \ref{property_P_W}.
Then for any nilsystem $(X,\B_X,\mu_X,T)$, where $X=G/\Gamma$ and $G$ is connected and simply connected, we have:
\begin{enumerate}
[label=(\roman{enumi}),ref=(\roman{enumi}),leftmargin=*]
\item
\label{itm_thm_connsimplyconn_i}
For any $h_1,\ldots,h_k\in C(X)$ and $x\in X$ the limit
\begin{equation*}
\lim_{N\to\infty}\frac{1}{W(N)}\sum_{n=1}^N w(n) \prod_{i=1}^k T^{[f_i(n)]}h_i(x)
\end{equation*}
exists.
\item
\label{itm_thm_connsimplyconn_iii}
If $f_1,\ldots,f_k$ satisfy condition \ref{itm_cond_2} of Theorem~\ref{thm_main_combinatorial} then for any $h_1,\ldots,h_k\in C(X)$ and any $x\in X$
\begin{equation*}
\lim_{N\to\infty}\frac{1}{W(N)}\sum_{n=1}^N w(n) \prod_{i=1}^k T^{[f_i(n)]}h_i(x)\,=\, \prod_{i=1}^k h_i^*(x)
\end{equation*}
where $h_i^*(x):=\lim_{N\to\infty}\tfrac1N\sum_{n=1}^Nh_i(T^nx)$.\footnote{Recall (e.g. from \cite{AGH63}) that for a nilsystem $(X,T)$ and a continuous function $h\in C(X)$ the limit $\lim_{N\to\infty}\tfrac1N\sum_{n=1}^Nh(T^nx)$ exists at every point $x$.}
\item
\label{itm_thm_connsimplyconn_ii}
If $f_1,\ldots,f_k$ satisfy condition \ref{itm_cond_1} of Theorem~\ref{thm_main_combinatorial} then for any $A\in\B$ with $\mu(A)>0$ we have
\begin{equation*}
\lim_{N\to\infty}\frac{1}{W(N)}\sum_{n=1}^N w(n) \mu\Big(A\cap T^{-[ f_1(n) ]}A\cap\ldots\cap T^{-[f_k(n)]}A\Big)\,>\,0.
\end{equation*}
\end{enumerate}
\end{Theorem}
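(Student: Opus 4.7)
Since $G$ is connected and simply connected, the generator $a$ with $T = a\,\cdot$ extends to a continuous $\R$-flow $(a^t)_{t \in \R}$ on $X$. Writing $[f_i(n)] = f_i(n) - \{f_i(n)\}$ gives
$$\prod_{i=1}^k T^{[f_i(n)]}h_i(x) = \prod_{i=1}^k h_i\bigl(a^{-\{f_i(n)\}}\, a^{f_i(n)}x\bigr),$$
which reformulates everything as an equidistribution problem for the joint sequence
$$n \mapsto \bigl(a^{f_1(n)}x,\ldots, a^{f_k(n)}x,\, \{f_1(n)\},\ldots,\{f_k(n)\}\bigr) \in X^k \times \T^k,$$
with respect to $W$-averages. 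Setting $\tilde h_i(y,\theta) := h_i(a^{-\theta}y)$ (continuous on $X \times (0,1)$ and extended by periodicity to $X \times \T$), the weighted average becomes the average of $\prod_i \tilde h_i$ along this joint sequence, and the plan is to show that the sequence equidistributes with $W$-weights in an explicit closed sub-nilmanifold $Y \subset X^k \times \T^k$.

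\textbf{Equidistribution via Hardy decomposition.} Apply \cref{lem_simple_normal_form_new_new} to partition $\{1,\ldots,k\} = \mathcal{I} \sqcup \mathcal{J}$ and write each $f_i$ ($i \in \mathcal{I}$) as $\sum_{j \in \mathcal{J}} \lambda_{i,j}f_j + p_i + o(1)$ with $p_i \in \poly(f_1,\ldots,f_k)$, while non-trivial linear combinations of $\{f_j : j \in \mathcal{J}\}$ escape every real polynomial. \cref{Cor_2claims} then yields $W$-uniform distribution mod $1$ for all the relevant non-trivial combinations. Combining this with the nilmanifold equidistribution machinery of \cite{Richter20arXiv} (applied to the product flow on $X^k \times \T^k$ and its sub-nilmanifolds), one identifies $Y$ as the smallest closed sub-nilmanifold of $X^k \times \T^k$ containing the orbit of the polynomial/Hardy flow driven by $(f_1,\ldots,f_k)$, modulo the rational polynomial rigidity encoded in the $p_i$'s, and proves the equidistribution of the joint sequence in $Y$ with $W$-weights.

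\textbf{Deriving the three conclusions.} To pass from equidistribution in $Y$ to the actual limit of the average, one handles the discontinuity of each $\tilde h_i$ at $\theta = 0$ by sandwiching between continuous functions on $X \times \T$ and using that the $\T$-projection of $Y$ gives no mass to $\{0\}$ (a consequence of \cref{Cor_2claims}). This yields part \ref{itm_thm_connsimplyconn_i}. For part \ref{itm_thm_connsimplyconn_iii}, under condition \ref{itm_cond_2} of \cref{thm_main_combinatorial} the polynomial contributions $p_i$ either vanish or have irrational leading coefficients, so by \cref{Cor_2claims} they equidistribute mod $1$ independently of the $X^k$-orbit; consequently $Y$ splits as a product of individual orbit closures in $X$ times the full torus $\T^k$, and integrating $\prod_i \tilde h_i$ collapses to $\prod_i h_i^*(x)$. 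For part \ref{itm_thm_connsimplyconn_ii}, under condition \ref{itm_cond_1} one chooses $n_0, m \in \N$ with $q_j(n_0) \equiv 0 \pmod m$ simultaneously for all the jointly intersective $q_j$, so on the arithmetic progression $n \equiv n_0 \pmod m$ every $p_i(n)$ becomes an integer; the equidistribution on this subprogression forces the diagonal point $(x,\ldots,x)$ into the $X^k$-factor of the orbit closure, and a standard nilsystem positivity argument lower-bounds the limit of $\mu(A \cap T^{-[f_1(n)]}A \cap \cdots \cap T^{-[f_k(n)]}A)$ by a positive constant.

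\textbf{Main obstacle.} The technical heart is identifying the correct sub-nilmanifold $Y$ and proving equidistribution in it. Rational polynomial combinations $p_i$ may constrain the sequence to a proper sub-nilmanifold, the non-polynomial Hardy parts equidistribute freely but with mutual linear dependencies dictated by $\mathcal{I}, \mathcal{J}$, and the fractional-part coordinates on $\T^k$ must be coherent with the $G$-flow on $X^k$. The decomposition from \cref{lem_simple_normal_form_new_new} together with \cref{Cor_2claims} cleanly separates these three types of behavior, and the equidistribution engine from \cite{Richter20arXiv} handles each piece; the delicate step is weaving them back together to describe $Y$ precisely and to verify that the positivity/factorization arguments for parts \ref{itm_thm_connsimplyconn_ii} and \ref{itm_thm_connsimplyconn_iii} survive the rounding-function perturbation.
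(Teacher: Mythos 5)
Your overall architecture is the same as the paper's: lift the problem to a joint equidistribution statement on a product of the nilmanifold with a torus recording the fractional parts (the paper does this by working in $\tilde X=(G\times\R)/(\Gamma\times\Z)$), identify the limit sub-nilmanifold via \cref{lem_simple_normal_form_new_new}, \cref{Cor_2claims} and the equidistribution theorems of \cite{Richter20arXiv}, and then derive (i)--(iii). However, there is a genuine gap in the step where you pass from equidistribution of the joint sequence to the limit of the averages: you claim that the discontinuity of $\tilde h_i$ can be ignored because ``the $\T$-projection of $Y$ gives no mass to $\{0\}$ (a consequence of \cref{Cor_2claims}).'' This is false in general. \cref{Cor_2claims} gives uniform distribution mod $1$ only when $f_i^*\neq0$ or $p_i-p_i(0)\notin\Q[t]$; in the remaining case, i.e.\ when $f_i$ is within $\oh(1)$ of a polynomial with rational coefficients, the fractional parts need not equidistribute and can converge exactly to the discontinuity point of the rounding map. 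Note that part \ref{itm_thm_connsimplyconn_i} assumes neither condition \ref{itm_cond_2} nor \ref{itm_cond_1} of \cref{thm_main_combinatorial}, so such functions (e.g.\ $f_1(n)=n-1/n$, or $f_1(n)=n/2$) are allowed, and even under condition \ref{itm_cond_1} this degenerate case can occur. Worse, when the limit measure does charge the discontinuity set, the answer is \emph{not} the naive pushforward of the limit measure: the paper's \cref{example_annoying1} exhibits $f_2(n)=n-1/n$ for which the limit distribution of the rounded sequence differs from the pushforward by a shift, and the correct limit depends on the eventual sign of $f_i-p_i$, not just on the limit measure. A sandwiching argument cannot see this sign and therefore produces the wrong limit.

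The paper closes exactly this hole with two devices you are missing. First, it reduces to functions satisfying condition \eqref{eq_rationalcondition} (\cref{thm_norationalpolynomials}) by splitting $\N$ into progressions $n\mapsto Rn+d$ and using \cref{lemma_WaveragesoverAPs}, so that any rational polynomial approximant becomes an integer polynomial; second, in the proof of \cref{thm_lastuniformdistribution} it still must treat the case where the limit measure is carried by the discontinuity set (Case II), where the rounded sequence is compared with a corrected sequence shifted by constants $c_i\in\{-1,0,1\}$ determined by the eventual sign of $f_i-p_i$. Without these steps your argument either does not apply or yields an incorrect limit in part \ref{itm_thm_connsimplyconn_i}. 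A smaller but related inaccuracy: in part \ref{itm_thm_connsimplyconn_ii} you say that on a suitable progression ``every $p_i(n)$ becomes an integer''; under condition \ref{itm_cond_1} the polynomials $p_i\in\poly(f_1,\dots,f_k)$ lie only in the \emph{real} span of jointly intersective integer polynomials, so they may have irrational coefficients, and the accumulation of the polynomial orbit at the identity has to be extracted from the joint intersectivity via the nilrotation recurrence result \cite[Proposition 2.3]{BLL08}, as the paper does, rather than from integrality of the $p_i(n)$.
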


\begin{proof}[Proof of \cref{thm_F} assuming \cref{thm_connectedsimplyconnected}]
Let $(X,\B_X,\mu_X,T)$ be an ergodic nilsystem.
In view of \cref{prop_malcev} we can represent $X=G/\Gamma$ as a closed subsystem of another nilsystem $(Z,\B_Z,\mu_Z,S)$ where $Z=\hat G/\hat\Gamma$ for a connected and simply connected nilpotent Lie group $\hat G$ which contains $G$ as a closed and rational subgroup.
Letting $\tilde G$ be the connected component of the identity in $G$ and letting $a\in G$ be such that $T$ corresponds to left multiplication by $a$, we also have that $G=\overline{a^\Z\tilde G}$ and $\hat G=\overline{a^\R\tilde G}$.
If $G=\hat G$ then $G$ is connected and simply connected and the desired conclusion follows directly from \cref{thm_connectedsimplyconnected}.
From now on suppose that $G\neq\hat G$.

To establish parts \ref{itm_thm_main_i} and \ref{itm_thm_main_iii} of Theorem~\ref{thm_main} for the nilsystem $(X,\B_X,\mu_X,T)$, it suffices to consider continuous functions $h_i$, for they are dense in $L^2$.
By the Tietze-Urysohn extension theorem, any continuous function $h\in C(X)$ can be extended to a continuous function on $Z$.
The conclusions now follow directly from parts \ref{itm_thm_connsimplyconn_i} and \ref{itm_thm_connsimplyconn_iii} of \cref{thm_connectedsimplyconnected}.

Before proving part \ref{itm_thm_main_ii} of Theorem~\ref{thm_main} for the nilsystem $(X,\B_X,\mu_X,T)$, we need some facts about the relation between $X$ and $Z$.
For every $y\in X$ the set $D:=\{t\in\R:a^ty\in X\}=\{t\in\R:a^t\in G\}$ forms a closed subgroup of $\R$.
We can't have $D=\R$ for this would imply $G=\hat G$, so $D$ must be discrete.
Notice that $D$ is also independent of $y$.
Let $c=\min\{t>0:t\in D\}$. 
Let $R:=S^c:X\to X$.
Since $1\in D$, it follows that $c=\tfrac1m$ for some $m\in\N$ and hence $R^m=T$ .

We claim that $\mu_Z=\tfrac1c\int_0^cS^t_*\mu_X\d t$.
Indeed, since $T$ acts ergodically on $X$, then so does $R$.
Therefore the system $(X,R)$ is uniquely ergodic and hence $\mu_X=\lim_{N\to\infty}\frac1N\sum_{n=1}^N\delta_{R^nx}$.
Since $\hat G=\overline{a^\R\tilde G}=\overline{\bigcup_{t\in[0,1)}a^tG}$, we have $Z=\overline{\bigcup_{t\in[0,1)}S^tX}$ and thus the flow $(S^t)_{t\in\R}$ on $Z$ is transitive and hence uniquely ergodic.
It follows that
$$\mu_Z
=
\lim_{N\to\infty}\frac1N\int_0^N\delta_{S^tx}\d t
=
\lim_{N\to\infty}\frac1N\sum_{n=1}^{mN}\int_0^c\delta_{S^tR^nx}\d t
=\tfrac1c\int_0^cS^t_*\mu_X\d t.$$

Now, fix $A\subset X$ with $\mu_X(A)>0$.
Let $B=\bigcup_{t\in[0,c)}S^{t}A$.
It follows that $S^{-t}B\cap X=A$ for every $t\in[0,c)$ and hence $\mu_Z(B)=m\int_0^c\mu_X(a^{-t}B)\d t=\mu_X(A)>0$.
In view of part \ref{itm_thm_connsimplyconn_ii} of \cref{thm_connectedsimplyconnected} (and the choice of $W$) applied to $(Z,\B_Z,\mu_Z,S)$ we deduce that
$$\lim_{N\to\infty}\frac1{W(N)}\sum_{n=1}^Nw(n)\mu_Z\big(B\cap S^{-[f_1(n)]}B\cap\cdots\cap S^{-[f_k(n)]}B\big)>0$$
as long as the functions $f_i$ satisfy either condition \ref{itm_cond_1} or \ref{itm_cond_2} of Theorem~\ref{thm_main_combinatorial}.
Since $X$ is $T$ invariant, for any $n_1,\dots,n_k\in\Z$ and any $t\in[0,c)$ we have
$$S^{-t}\big(B\cap S^{n_1}B\cap\cdots\cap S^{n_k}B\big)\cap X=A\cap T^{n_1}A\cap\cdots\cap T^{n_k}A.$$
Therefore, for every $n\in\N$
\begin{equation*}
\begin{split}
\mu_Z\big(B\cap S^{-[f_1(n)]}B\cap & \dots\cap S^{-[f_k(n)]}B\big)
\\&=
\frac1c\int_0^c\mu_X\big(S^{-t}\big(B\cap S^{-[f_1(n)]}B\cap\cdots\cap S^{-[f_k(n)]}B\big)\big)\d t
\\&=
\frac1c\int_0^c\mu_X\big(A\cap S^{-[f_1(n)]}A\cap\cdots\cap S^{-[f_k(n)]}A\big)\d t
\\&=
\mu_X\big(A\cap S^{-[f_1(n)]}A\cap\cdots\cap S^{-[f_k(n)]}A\big)
\end{split}
\end{equation*}
and the desired conclusion follows.
\end{proof}

\subsection{Dealing with rational polynomials}

Next we perform another simplification, this time restricting the class of allowed functions $f_i$.
\begin{Theorem}
\label{thm_norationalpolynomials}
\cref{thm_connectedsimplyconnected} holds if we further assume that each function $f_i$ satisfies the condition
\begin{equation}\label{eq_rationalcondition}
  \text{If }|f_i(t)-p(t)|\to0\text{ for some }p\in\R[t]\text{ with }p-p(0)\in\Q[t]\text{, then }p-p(0)\in\Z[t].
\end{equation}
\end{Theorem}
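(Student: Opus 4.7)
The plan is to exploit connectedness and simple-connectedness of $G$ in order to view $T$ as the time-one map of a continuous $\R$-flow $t\mapsto a^t$ on $X=G/\Gamma$. Using the identity $a^{[f_i(n)]}=a^{f_i(n)}\cdot a^{-\{f_i(n)\}}$, the product $\prod_i T^{[f_i(n)]}h_i(x)$ equals $F(\Phi(n))$, where
$$\Phi(n)\coloneqq\big(a^{f_1(n)}x,\ldots,a^{f_k(n)}x,\{f_1(n)\},\ldots,\{f_k(n)\}\big)\in X^k\times\T^k$$
and $F(y_1,\ldots,y_k,s_1,\ldots,s_k)\coloneqq\prod_i h_i(a^{-s_i}y_i)$ is continuous. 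The problem thus reduces to showing that $\Phi$ is $W$-equidistributed in an appropriate sub-nilmanifold of $X^k\times\T^k$, and to identifying this closure.

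The next step is to unravel the linear structure of $\{f_1,\ldots,f_k\}$ via \cref{lem_simple_normal_form_new_new}: partition $\{1,\ldots,k\}=\mathcal{I}\sqcup\mathcal{J}$ so that, for $i\in\mathcal{I}$, one has $f_i=\sum_{j\in\mathcal{J}}\lambda_{ij}f_j+p_i+o(1)$ for some $p_i\in\R[t]$, while for $j\in\mathcal{J}$ every nontrivial linear combination of the $f_j$'s stays far from $\R[t]$. The extra hypothesis \eqref{eq_rationalcondition} forces each $p_i$ to either have some irrational non-constant coefficient (whence $\{p_i(n)\}$ equidistributes modulo $1$ by \cref{Cor_2claims}) or to satisfy $p_i-p_i(0)\in\Z[t]$ (whence $\{p_i(n)\}\equiv p_i(0)\bmod 1$ is constant). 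Either way, rational-but-non-integer polynomial approximations are ruled out, which is exactly what one needs to invoke the equidistribution machinery of \cite{Richter20arXiv} (in particular \cref{prop_Bosh_W-averages}) on the product nilmanifold without running into local arithmetic obstructions.

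Once $\Phi$ is shown to be $W$-equidistributed in its natural orbit closure, part \ref{itm_thm_connsimplyconn_i} follows immediately. Part \ref{itm_thm_connsimplyconn_iii} is obtained by observing that Condition \ref{itm_cond_2} kills every nontrivial linear relation among the $f_i$ modulo polynomials, so the orbit closure factors as a product and the limit of $F$ reduces to $\prod_i h_i^*(x)$. Part \ref{itm_thm_connsimplyconn_ii} follows by approximating $1_A$ by continuous functions and exploiting the joint intersectivity of the polynomials $q_1,\ldots,q_\ell$ from Condition \ref{itm_cond_1} to pick a residue class of $n$ on which all the polynomial parts $p_i(n)$ lie in $\Z$ simultaneously, forcing the orbit closure to meet the diagonal $\{(y,\ldots,y,0,\ldots,0)\}$ and producing a positive lower bound on the integral. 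The main obstacle will be correctly describing the orbit closure of $\Phi$ in $X^k\times\T^k$: the fractional-part and flow coordinates are coupled through the common $f_i$, and it is precisely \eqref{eq_rationalcondition} that decouples them enough to invoke the off-the-shelf equidistribution results of \cite{Richter20arXiv}.
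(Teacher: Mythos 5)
Your overall architecture does match the paper's: you encode the nearest-integer rounding through the flow $t\mapsto a^t$ (legitimate since $G$ is connected and simply connected) and reduce to the distribution of the coupled sequence $\Phi(n)$ in $X^k\times\T^k$, which is exactly the paper's suspension $\tilde X^k$ with $\tilde X=(G\times\R)/(\Gamma\times\Z)$, and you then appeal to the equidistribution results of \cite{Richter20arXiv}. However, two steps are genuinely wrong or missing. First, your $F(y_1,\ldots,y_k,s_1,\ldots,s_k)=\prod_i h_i(a^{-s_i}y_i)$ is \emph{not} a continuous function on $X^k\times\T^k$; it is not even well defined there, because replacing $s_i$ by $s_i+1$ changes $a^{-s_i}y_i$ by an application of $T^{-1}$ and the $h_i$ are not $T$-invariant. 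Choosing the branch $s_i\in[-1/2,1/2)$ makes the identity $\prod_i T^{[f_i(n)]}h_i(x)=F(\Phi(n))$ valid, but $F$ then jumps along the seam $\{s_i\equiv 1/2\bmod 1\}$, so $W$-equidistribution of $\Phi$ with respect to a limit measure only controls the averages when that measure gives the seam zero mass. The case of positive mass is precisely where \eqref{eq_rationalcondition} is used, and with a different role than the one you assign to it: the machinery of \cite{Richter20arXiv} (\cref{thm_D} and \cref{thm_E}) needs no such hypothesis, whereas \eqref{eq_rationalcondition}, combined with \cref{Cor_2claims} and the eventual sign constancy of $f_i-p_i$ for Hardy-field functions, guarantees that any non-equidistributing fractional part converges to $1/2$ from a fixed side, so the discontinuous evaluation can be corrected by fixed powers of $a$ and the limit still identified (the paper's Case II). \cref{example_annoying1} shows that without this correction the naive pushforward gives the wrong limit, so this point cannot be absorbed into ``describing the orbit closure.''

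Second, the positivity argument you sketch for part \ref{itm_thm_connsimplyconn_ii} of \cref{thm_connectedsimplyconnected} does not work: the polynomials $p_i$ supplied by \cref{lem_simple_normal_form_new_new} lie in $\poly(f_1,\dots,f_k)\subset\sps(q_1,\dots,q_\ell)$, i.e.\ they are \emph{real} linear combinations of the jointly intersective integer polynomials, so there is no residue class of $n$ on which they all take integer values, and joint intersectivity cannot be exploited through such a congruence reduction. What is actually needed is that the polynomial nilrotation $n\mapsto(b^{p_{l+1}(n)},\dots,b^{p_k(n)})\Gamma^k$ accumulates at the identity, which is the nontrivial recurrence theorem for jointly intersective polynomials on nilmanifolds, \cite[Proposition 2.3]{BLL08}, applied only after the orbit closure has been computed via the normal form of \cref{lem_simple_normal_form_new_new}, \cref{thm_D} and \cref{prop_Bosh_W-averages}. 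More generally, the existence and identification of the limit measure (existence via \cref{thm_E} along subprogressions, the support statement under condition \ref{itm_cond_1}, and Haar measure on $Y^k$ under condition \ref{itm_cond_2} via \cite[Corollary 1.7]{Richter20arXiv}) constitutes the bulk of the paper's proof of this theorem, and your proposal defers all of it to ``the main obstacle'' without supplying an argument.
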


To reduce \cref{thm_connectedsimplyconnected} to \cref{thm_norationalpolynomials} we need the following lemma.

\begin{Lemma}\label{lemma_WaveragesoverAPs}
  Let $a(n)$ be a bounded sequence (of real or complex numbers, or vectors of a Banach space), let $R\in\N$ and let $\Hardy$ be a Hardy field, $W\in\Hardy$ satisfying $1\prec W(t)\ll t$ and $w:=\Delta W$.
  Then
  $$\lim_{N\to\infty}\left[\frac{1}{W(RN)}\sum_{n=1}^{RN} w(n)a(n)
  -
  \frac1R\sum_{d=0}^{R-1}\frac{1}{W(N)}\sum_{n=1}^N w(n)a(Rn+d)\right]=0.$$
\end{Lemma}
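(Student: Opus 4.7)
Plan:

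My plan is to decompose the left-hand sum by residue modulo $R$ and then compare the resulting weighted averages to the right-hand side. Writing $m = Rn + d$ with $n \in \{1, \ldots, N\}$ and $d \in \{0, \ldots, R-1\}$, the index set $\{Rn + d\}$ covers $\{R, \ldots, RN+R-1\}$, which differs from $\{1, \ldots, RN\}$ by only $O(R)$ boundary terms whose contribution is $O(1/W(RN)) = o(1)$ since $W \to \infty$. Next, I would replace $w(Rn + d)$ by $w_*(n)/R$, where $w_*(n) := W(R(n+1)) - W(Rn) = \sum_{d=0}^{R-1} w(Rn+d)$. The key Hardy field input here is that $W \in \Hardy$ with $1 \prec W \ll t$ forces $W'$ to converge to the finite limit $\lim_{t\to\infty} W(t)/t$; consequently $W'' \in \Hardy$ is eventually of constant sign and integrable on $[1, \infty)$, giving $\sum_n \max_d |w(Rn+d) - w_*(n)/R| < \infty$ and hence a replacement error of $O(1/W(RN)) = o(1)$. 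After these reductions the left-hand side equals $\frac{1}{W(RN)} \sum_{n=1}^N w_*(n) b(n) + o(1)$, where $b(n) := \frac{1}{R} \sum_{d=0}^{R-1} a(Rn + d)$; the right-hand side, rearranged, equals $\frac{1}{W(N)} \sum_{n=1}^N w(n) b(n)$.

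It then suffices to prove the total variation bound $\sum_{n=1}^N |w_*(n)/W(RN) - w(n)/W(N)| \to 0$. For this I would invoke the Karamata-type representation $W(t) = t^c L(t)$, valid for Hardy functions with $1 \prec W \ll t$, where $c := \lim_{t\to\infty} tW'(t)/W(t) \in [0,1]$ and $L$ is slowly varying. A direct asymptotic computation yields $w_*(n)/W(RN) \sim c\, n^{c-1} L(Rn)/(N^c L(RN))$ and analogously $w(n)/W(N) \sim c\, n^{c-1} L(n)/(N^c L(N))$, so their difference factors through $L(Rn)/L(RN) - L(n)/L(N)$. Slow variation of $L$---with uniformity supplied by Potter-type bounds---combined with a splitting argument separating the negligible small-$n$ contribution from the large-$n$ regime, bounds the total variation sum by $o(1)$. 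The degenerate case $c = 0$ (e.g.\ $W = \log t$) is handled directly using $R L'(Rt) \sim L'(t)$ for slowly varying Hardy $L$, which reduces the difference to $L'(n)\cdot |L(N)^{-1} - L(RN)^{-1}|$ and sums to $L(N)/L(RN) - 1 = o(1)$.

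The main obstacle is this last step, the total variation estimate, which requires genuine asymptotic analysis of slowly varying Hardy functions (and a separate treatment of the borderline case $c=0$). The first two reductions are, by contrast, routine manipulations relying on the integrability of $W''$ (a consequence of the convergence $W'(t) \to \lim W(t)/t$) and the boundedness of $a$.
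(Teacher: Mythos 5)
Your proposal is correct in outline, but it takes a genuinely different (and heavier) route than the paper at the key step. The initial moves coincide: both reindex the left-hand sum along residues mod $R$ and discard $O(R)$ boundary terms. The paper then finishes very quickly by a ratio-cancellation trick: since $W\in\Hardy$ and $1\prec W(t)\ll t$, the limit $L:=\lim_{t\to\infty}W(Rt+d)/W(t)$ exists and lies in $[1,R]$ (it equals $R$ when $\lim W(t)/t>0$, and is squeezed between $1$ and $R$ otherwise), and L'H\^opital plus the mean value theorem give $w(Rt+d)/w(t)\to L/R$; substituting $W(RN)\approx LW(N)$ and $w(Rn+d)\approx\tfrac LR w(n)$ makes the unknown constant $L$ cancel, so no information about the actual growth of $W$ is ever needed. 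You instead compare $w(Rn+d)$ with the block average $w_*(n)/R$ — a clean and quantitative step, since $\sum_n\operatorname{osc}_{[Rn,R(n+1)]}W'\leq\int|W''|<\infty$ once $W'$ converges — but this forces you to then compare the two normalized weights $w_*(n)/W(RN)$ and $w(n)/W(N)$ directly, which is exactly where you need the Karamata representation $W(t)=t^cL(t)$ with $c=\lim tW'(t)/W(t)\in[0,1]$, Potter-type bounds, a small-$n$/large-$n$ splitting, and a separate treatment of $c=0$. I checked that this program does go through for Hardy-field $W$ (the representation is valid, the error terms sum to $\oh(W(N))$ by a weighted Ces\`aro argument, and the $c=0$ case works), with one caveat: your auxiliary claim that $RL'(Rt)\sim L'(t)$ for slowly varying Hardy $L$ is false in general (e.g.\ $L(t)=2-1/t$); it requires $L\to\infty$, equivalently non-integrability of $L'$, which does hold in your application since there $L=W\succ1$, so the gap is only in the stated justification, not in the argument. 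In short, your approach buys a more explicit, quantitative error analysis at the cost of regular-variation machinery and a case split, whereas the paper's cancellation of the unquantified limit $L$ avoids all of it.
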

\begin{proof}
  Because $W\in\Hardy$, $\lim_{t\to\infty} W(t)/t$ exists and, because $W(t)\ll t$, that limit is in $[0,\infty)$.
  If it is nonzero, then $L:=\lim \frac{W(Rt+d)}{W(t)}=\lim \frac{W(Rt+d)}{Rt+d}\frac t{W(t)}\frac{Rt+d}t=R$, and if $\lim W(t)/t=0$ then $W(t)/t$ is eventually decreasing, so $L\leq R$.
  Since $W$ is eventually increasing (because $1\prec W(t)$) we have $L\geq1$.
  Next it follows from L'H\^opital's rule and the mean value theorem that $w(Rt+d)/w(t)\to\frac LR$.

  We now have
  \begin{eqnarray*}\frac{1}{W(RN)}\sum_{n=1}^{RN} w(n)a(n)
  &=&
  \frac{1+o(1)}{LW(N)}\sum_{d=0}^{R-1}\sum_{n=1}^{N} w(Rn+d)a(Rn+d)
  \\&=&
\sum_{d=0}^{R-1}\frac{1+o(1)}{LW(N)}\sum_{n=1}^{N} \frac LRw(n)a(Rn+d)
  \\&=&
\frac{1+o(1)}R\sum_{d=0}^{R-1}\frac1{W(N)}\sum_{n=1}^{N}w(n)a(Rn+d).
  \end{eqnarray*}
\end{proof}
\begin{proof}[Proof of \cref{thm_connectedsimplyconnected} assuming \cref{thm_norationalpolynomials}]
After reordering the $f_i$'s if necessary, we can assume that the functions $f_1,\dots,f_m$ do not satisfy condition \eqref{eq_rationalcondition}, whereas the functions $f_{m+1},\dots,f_k$ do.
For each $i=1,\dots,m$ let $p_i\in\R[t]$ be such that $p_i-p_i(0)\in\Q[t]$ and $|f_i(t)-p_i(t)|\to0$.
Let $R$ be a common denominator for all the coefficients of all the polynomials $p_i-p_i(0)$, for $i=1,\dots,m$.

Observe that the polynomials $p_{i,d}\colon t\mapsto p_i(Rt+d)-p_i(d)$ have integer coefficients for any fixed $d\in\N$ and any $i\leq m$.
Therefore, for each $d\in\N$ and $i\in\{1,\dots,k\}$ the function $f_{i,d}\colon t\mapsto f_i(Rt+d)$ satisfies condition \eqref{eq_rationalcondition}.
Parts \ref{itm_thm_connsimplyconn_i} and \ref{itm_thm_connsimplyconn_iii} of \cref{thm_connectedsimplyconnected} now follow from \cref{thm_norationalpolynomials} and \cref{lemma_WaveragesoverAPs}.

Finally, we prove part \ref{itm_thm_connsimplyconn_ii}.
Suppose that the functions $f_1,\ldots, f_k$ satisfy condition \ref{itm_cond_1} of Theorem~\ref{thm_main_combinatorial} and let $q_1,\dots,q_\ell\in\Z[t]$ be jointly intersective and such that $\poly(f_1,\dots,f_k)\subset\sp(q_1,\dots,q_\ell)$.
By the pigeonhole principle, for some $d\in\{0,\dots,R-1\}$ the polynomials $\tilde q_i:t\mapsto q_i(Rt+d)$, $i=1,\dots,\ell$ are jointly intersective.
Therefore the functions $f_{1,d},\ldots,f_{k,d}$ also satisfy condition \ref{itm_cond_1} of Theorem~\ref{thm_main_combinatorial}.
Then using \cref{lemma_WaveragesoverAPs} and \cref{thm_norationalpolynomials} we have
\begin{equation*}
\begin{split}
\lim_{N\to\infty}\frac{1}{W(N)}\sum_{n=1}^N w(n) & \mu\left(\bigcap_{i=0}^kT^{-[f_i(n)]}A\right)
\\
&\geq~
\frac1R\lim_{N\to\infty}\frac{1}{W(N)}\sum_{n=1}^Nw(n)\mu\left(\bigcap_{i=0}^kT^{-[f_{i,d}(n)]}A\right)
>
0,
\end{split}
\end{equation*}
where, to simplify the expression, we used $f_0=f_{0,d}\equiv0$.
\end{proof}

\subsection{Reduction to a statement about uniform distribution}
\label{sec_intermediate_result}

Thus far, we have reduced \cref{thm_F} to \cref{thm_norationalpolynomials}. Our next goal is to reduce \cref{thm_norationalpolynomials} to the following result:

\begin{Theorem}\label{thm_lastuniformdistribution}
Let $G$ be a connected and simply connected nilpotent Lie group, let $\Gamma\subset G$ be a uniform and discrete subgroup, let $X=G/\Gamma$, and let $a\in G$.
  Let $f_1,\dots,f_k\in\Hardy$ satisfying property \ref{property_P_W} for some $W\in\Hardy$ with $1\prec W(t)\ll t$ and property \eqref{eq_rationalcondition} from \cref{thm_norationalpolynomials}.
Then there exists a Borel probability measure $\nu$ on $X^k$ such that the sequence
  \begin{equation}\label{eq_thm_removingfloors}
    n\mapsto\Big(a^{[f_1(n)]},a^{[f_2(n)]},\dots,a^{[f_k(n)]}\Big)\Gamma^k
  \end{equation}
  is uniformly distributed  with respect to $\nu$ and $W$-averages.

Moreover, if condition \ref{itm_cond_1} of Theorem~\ref{thm_main_combinatorial} is satisfied then the point $1_{X^k}=1_{G^k}\Gamma^k$ belongs to the support of $\nu$, and if condition \ref{itm_cond_2} of Theorem~\ref{thm_main_combinatorial} is satisfied then
$\nu$ is the Haar measure on the subnilmanifold $Y^k\subset X^k$, where $Y=\overline{\{a^n\Gamma:n\in\Z\}}$.

\end{Theorem}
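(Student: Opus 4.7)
The plan is to eliminate the floor function by exploiting connectedness of $G$, and then reduce to a joint equidistribution statement on an auxiliary nilmanifold that can be resolved by combining the Hardy-field equidistribution of \cite{Richter20arXiv} with Leibman's theorem for polynomial orbits. Since $G$ is connected and simply connected, $a^t$ is defined for every $t\in\R$. Writing $[f_i(n)]=f_i(n)-\{f_i(n)\}$ yields the factorization $a^{[f_i(n)]}=a^{f_i(n)}\cdot a^{-\{f_i(n)\}}$. Consequently the sequence in \eqref{eq_thm_removingfloors} is the image under the continuous map
\[
\Psi:X^k\times\T^k\to X^k,\qquad \Psi\big((x_i)_i,(s_i)_i\big):=\big(a^{-s_i}x_i\big)_{i=1}^k,
\]
of the lifted sequence
\[
y_n\,:=\,\Big(\big(a^{f_1(n)},\dots,a^{f_k(n)}\big)\Gamma^k,\,\big(\{f_1(n)\},\dots,\{f_k(n)\}\big)\Big)\,\in\, X^k\times\T^k.
\]
So it suffices to produce a Borel probability measure $\tilde\nu$ on $X^k\times\T^k$ such that $(y_n)$ is uniformly distributed with respect to $\tilde\nu$ and $W$-averages, and then define $\nu:=\Psi_*\tilde\nu$.

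To establish the equidistribution of $(y_n)$ I would apply \cref{lem_simple_normal_form_new_new} together with \cref{Cor_2claims} to decompose each $f_i$ in the form $f_i=f_i^*+p_i+o(1)$, where $p_i\in\poly(f_1,\dots,f_k)$ and $f_i^*$ lies in the $\R$-span of those $f_j$ that differ from every real polynomial by an unbounded function (or $f_i^*=f_i$ when $i\in\mathcal{J}$). Property \eqref{eq_rationalcondition} ensures that whenever $p_i-p_i(0)\in\Q[t]$ in fact $p_i-p_i(0)\in\Z[t]$; this is crucial, as it makes the integer translations generated by $p_i$ compatible with the $\Gamma$-quotient. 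Absorbing the $o(1)$ error by continuity, we may replace $f_i$ with $f_i^*+p_i$. Testing uniform distribution against continuous functions on $X^k\times\T^k$ by Fourier expansion in the $\T^k$-coordinate followed by uniform approximation on $X^k$, the problem splits into two sub-equidistribution statements: (a) equidistribution of $(a^{p_1(n)},\dots,a^{p_k(n)})\Gamma^k$ in $X^k$, which is handled by Leibman's equidistribution theorem for polynomial orbits on nilmanifolds; and (b) equidistribution of the fractional parts $(f_i^*(n)+p_i(n)\bmod 1)_i$ on a subtorus of $\T^k$ with respect to $W$-averages, which is furnished by \cref{Cor_2claims}. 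Gluing the two streams together requires a Weyl-type decoupling argument showing that no residual non-trivial correlation survives between the polynomial data on $X^k$ and the Hardy data on $\T^k$.

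The ``moreover'' claims will then follow by identifying the support of $\tilde\nu$. Under condition \ref{itm_cond_2}, every nonzero element of $\sp(f_1,\dots,f_k)$ differs from any $q\in\Z[t]$ by an unbounded function, which maximises the equidistribution subgroup in step (a) and forces $\Psi_*\tilde\nu$ to be the Haar measure on the subnilmanifold $Y^k$. Under condition \ref{itm_cond_1}, the polynomials $p_i$ lie in the $\R$-span of a jointly intersective family $q_1,\dots,q_\ell\in\Z[t]$, so for each $m\in\N$ the set of $n$ with $p_i(n)\equiv 0\pmod{m}$ for all $i$ has positive density; combined with the equidistribution of $(\{f_i^*(n)+p_i(n)\})_i$, this places arbitrarily small return times of the lifted orbit into arbitrary neighbourhoods of $(1_{X^k},0)$, so that $1_{X^k}\in\mathrm{supp}(\nu)$. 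The main obstacle is step (b): carrying out the Weyl-decoupling of the Hardy and polynomial contributions in the presence of the additional $\T^k$ coordinate, in a way that fits the $W$-averaging scheme. This is where the joint equidistribution estimates from \cite{Richter20arXiv} — rather than their pointwise one-function analogues — must be invoked, and keeping track of the precise way Property \ref{property_P_W} interacts with the polynomial drift is the most delicate bookkeeping step.
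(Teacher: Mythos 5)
Your lifting idea is in the same spirit as the paper's reduction (the paper lifts to $\tilde X^k$ with $\tilde X=(G\times\R)/(\Gamma\times\Z)$ and projects back by a map that is essentially your $\Psi$), but the proposal has a genuine gap exactly at the point the paper is careful about: $\Psi$ is \emph{not} a continuous map on $X^k\times\T^k$. Indeed $a^{-s}$ depends on the real representative of $s$, not on $s\bmod 1$, so to make $\Psi$ well defined you must choose a representative (for the closest-integer function the natural choice is $\rho(t)=t-[t]\in[-1/2,1/2)$, not the fractional part $\{\cdot\}$, which corresponds to the floor), and the resulting map is discontinuous on the locus where a torus coordinate equals $1/2$. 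Pushing forward the limit measure $\tilde\nu$ of the lifted sequence under a discontinuous map does not in general give the limiting distribution of the projected sequence: if $\tilde\nu$ charges the discontinuity set, "absorbing the $o(1)$ error by continuity" fails, and the naive pushforward can be outright wrong — see \cref{example_annoying1} in the paper ($f_1(n)=n$, $f_2(n)=n-1/n$), where $\pi_*\tilde\nu$ is the Haar measure on the diagonal but the rounded sequence equidistributes on a translate of it. This is precisely where property \eqref{eq_rationalcondition} is used in the paper's argument: when the discontinuity set is charged, one shows via \cref{Cor_2claims} that the offending $f_i$ is within $o(1)$ of a polynomial $p_i$ with $p_i-p_i(0)\in\Z[t]$, so the relevant coordinate is eventually constant equal to $1/2$, and one must correct the projection by translating with $a^{c_i}$, $c_i\in\{-1,0,1\}$, determined by the eventual sign of $f_i-p_i$. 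Your proposal invokes \eqref{eq_rationalcondition} only vaguely ("compatible with the $\Gamma$-quotient") and contains no mechanism for this correction, so as written the pushforward step does not yield the claimed $\nu$.

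The second gap is that the equidistribution of the lifted sequence $y_n$ — together with the identification of its limit measure, which is what the "moreover" clauses require — is not actually proved: the "Weyl-type decoupling" you flag as the main obstacle is the real content of the theorem. In the paper this is \cref{thm_nilequidistribution_2_1_new}, whose proof rests on Theorems D and E of \cite{Richter20arXiv} (applied in the product group, where the limit measure is typically supported on a finite union of proper subnilmanifolds $Y_0,\dots,Y_{q-1}$ of $X^k$ attached to residue classes), on \cref{lem_simple_normal_form_new_new}, and on \cref{thm_nilequidistribution_2_2}; it is not obtained by simply combining Leibman's theorem for the polynomial part with \cref{Cor_2claims} for a torus coordinate, because the joint distribution of $\big(a^{f_1(n)},\dots,a^{f_k(n)}\big)\Gamma^k$ is not determined by its abelian projections alone without the criterion of \cite[Theorem D]{Richter20arXiv}. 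Likewise, the support claim under condition \ref{itm_cond_1} cannot be deduced from divisibility plus equidistribution of fractional parts: returning a nilrotation orbit along jointly intersective polynomials to a neighbourhood of the identity is a theorem (\cite[Proposition 2.3]{BLL08}), and the paper's argument reduces to it only after the normal-form decomposition and the density statement from \cite[Corollary 1.7]{Richter20arXiv}. So the overall architecture you propose is reasonable, but both the projection step and the lifted equidistribution step need the substantive arguments supplied in the paper.
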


\begin{proof}[Proof of \cref{thm_norationalpolynomials} assuming \cref{thm_lastuniformdistribution}]
Let $G$ be a connected and simply connected nilpotent Lie group and let $\Gamma\subset G$ be a uniform and discrete subgroup such that $X=G/\Gamma$ and $T:X\to X$ is given by left multiplication by some element $a\in G$.
Our goal is to show that: 

\begin{enumerate}
[label=(\roman{enumi}),ref=(\roman{enumi}),leftmargin=*]
\item
For any $h_1,\ldots,h_k\in C(X)$ and $x\in X$ the limit
\begin{equation*}
\lim_{N\to\infty}\frac{1}{W(N)}\sum_{n=1}^N w(n) \prod_{i=1}^k T^{[f_i(n)]}h_i(x)
\end{equation*}
exists.
\item
If $f_1,\ldots,f_k$ satisfy condition \ref{itm_cond_2} of Theorem~\ref{thm_main_combinatorial} then for any $h_1,\ldots,h_k\in C(X)$ and any $x\in X$
\begin{equation*}
\lim_{N\to\infty}\frac{1}{W(N)}\sum_{n=1}^N w(n) \prod_{i=1}^k T^{[f_i(n)]}h_i(x)\,=\, \prod_{i=1}^k h_i^*(x)
\end{equation*}
where $h_i^*(x):=\lim_{N\to\infty}\tfrac1N\sum_{n=1}^Nh_i(T^nx)$.
\item
If $f_1,\ldots,f_k$ satisfy condition \ref{itm_cond_1} of Theorem~\ref{thm_main_combinatorial} then for any $A\in\B$ with $\mu(A)>0$ we have
\begin{equation*}
\lim_{N\to\infty}\frac{1}{W(N)}\sum_{n=1}^N w(n) \mu\Big(A\cap T^{-[ f_1(n) ]}A\cap\ldots\cap T^{-[f_k(n)]}A\Big)\,>\,0.
\end{equation*}
\end{enumerate}

For each $x\in X$, let $g_x\in G$ be such that $x=g_x\Gamma$.
For any $h_1,\dots,h_k\in C(X)$ we have
$$\prod_{i=1}^kh_i\big(a^{[f_i(n)]}x\big)
=
\prod_{i=1}^kh_i\Big(g_x\big(g_x^{-1}ag_x\big)^{[f_i(n)]}\Gamma\Big).$$
Now let $\nu_x$ be the measure on $X^k$ given by \cref{thm_lastuniformdistribution} with $g_x^{-1}ag_x$ in the role of $a$ and define $H_x(x_1,x_2,\dots,x_k):=h_1(g_xx_1)h_2(g_xx_2)\cdots h_k(g_xx_k)$.
We then have
\begin{equation}\label{eq_proof_thm_multirecnilsystem1}
\lim_{N\to\infty}\frac{1}{W(N)}\sum_{n=1}^N w(n)\prod_{i=1}^kh_i\big(a^{[f_i(n)]}x\big)
=
\int_{X^k} H_x\d\nu_x.
\end{equation}
In particular, this proves part \ref{itm_thm_connsimplyconn_i}. 
To prove part \ref{itm_thm_connsimplyconn_iii}, suppose that condition \ref{itm_cond_2} holds.
Then $\nu_x$ is the Haar measure on $Y_x^k$, where $Y_x=\overline{\{g_x^{-1}a^ng_x\Gamma:n\in\Z\}}$.
The Haar measure $\mu_{Y_x}$ on $Y_x$ can be described by $\int_{Y_x}h\d\mu_{Y_x}=\lim_{N\to\infty}\frac1N\sum_{n=1}^Nf(g_x^{-1}a^ng_x\Gamma)$, and hence for each $i=1,\dots,k$,
$$\int_{Y_x}h_i(g_xy)\d\mu_{Y_x}(y)=\lim_{N\to\infty}\frac1N\sum_{n=1}^Nh_i(a^nx)=h_i^*(x).$$
Therefore
$$\int_{X^k} H_x\d\nu_x=\prod_{i=1}^k\int_{Y_x}h_i(g_xx_i)\d\mu_{Y_x}(x_i)=\prod_{i=1}^kh_i^*(x).$$
This together with \eqref{eq_proof_thm_multirecnilsystem1} proves part \ref{itm_thm_connsimplyconn_iii}.

To prove part \ref{itm_thm_connsimplyconn_ii} it is enough to show that for any continuous non-negative function $h\in\Cont(X)$ with $\int_Xh\d\mu>0$ we have
$$\lim_{N\to\infty}\frac{1}{W(N)}\sum_{n=1}^N w(n)\int_Xh(x)\cdot\prod_{i=1}^kh\big(a^{[f_i(n)]}x\big)\d\mu_X(x)>0.$$
In view of \eqref{eq_proof_thm_multirecnilsystem1} and the dominated convergence theorem, it suffices to show that
\begin{equation}\label{eq_multiplerecurrence}
  \int_Xh(x)\int_{X^k}H_x\d\nu_x\d\mu_X(x)>0,
\end{equation}
where $H_x$ is the same function as above, only with $h_1=\ldots=h_k=h$.
Observe that if $h(x)>0$ for some $x\in X$, then $H_x(1_{G^k}\Gamma^k)=h(x)^k>0$.
Since $H_x$ is a continuous function and $1_{G^k}\Gamma^k$ is in the support of $\nu_x$ for every $x\in X$, it follows that whenever $h(x)>0$, also $\int_{X^k}H_x\d\nu_x>0$.
Since $\int_Xh\d\mu_X>0$ we have that $\mu_X(\{x\in X:h(x)>0\})>0$, so \eqref{eq_multiplerecurrence} holds.
\end{proof}

\subsection{Removing the rounding function}
\label{sec_another_reduciton}

Next, let us reduce \cref{thm_lastuniformdistribution} to a result that no longer involves the bracket function $[.]\colon\R\to\Z$.

\begin{Theorem}\label{thm_nilequidistribution_2_1_new}
Let $G$ be a connected and simply connected nilpotent Lie group, let $\Gamma\subset G$ be a uniform and discrete subgroup, let $b\in G$ and $X=G/\Gamma$.
Let $\Hardy$ be a Hardy field and let $W\in\Hardy$ satisfy $1\prec W(t)\ll t$.
For any finite collection of functions $f_1,\ldots,f_k\in\Hardy$ of polynomial growth satisfying property \ref{property_P_W} and property \eqref{eq_rationalcondition} there exists a Borel probability measure $\nu$ on $X^k$ such that the sequence $x_n=\big(b^{f_1(n)},\dots,b^{f_k(n)}\big)\Gamma^k$ is uniformly distributed with respect to $\nu$ and $W$-averages.

Moreover, if condition \ref{itm_cond_1} of Theorem~\ref{thm_main_combinatorial} is satisfied then the point $1_{X^k}=1_{G^k}\Gamma^k$ belongs to the support of $\nu$, and if condition \ref{itm_cond_2} of Theorem~\ref{thm_main_combinatorial} is satisfied then
$\nu$ is the Haar measure on the subnilmanifold $Y^k$ where $Y\subset X$ is defined by $Y=\overline{\{b^t\Gamma:t\in\R\}}$.
\end{Theorem}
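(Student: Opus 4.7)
The plan is to reduce the statement to a question about equidistribution of a polynomial-like orbit on a nilmanifold and then invoke the Hardy-field equidistribution machinery from \cite{Richter20arXiv}. As a first step, I would apply Corollary \ref{Cor_2claims} (in the notation of Lemma \ref{lem_simple_normal_form_new_new}) to decompose each $f_i=f_i^*+p_i+\epsilon_i$, where $f_i^*\in\sp\{f_j:j\in\mathcal{J}\}$ is a linear combination of the ``genuinely non-polynomial'' generators, $p_i\in\R[t]$, and $\epsilon_i(t)\to 0$. Since $G$ is connected and simply connected the exponential map is a global diffeomorphism, so $b^{\epsilon_i(n)}\to 1_G$, and this factor can be absorbed as an $o(1)$ perturbation that does not affect equidistribution with respect to any $W$-average. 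Hence it suffices to analyze the orbit $g(n)\Gamma^k\in X^k$, where $g(n)=\big(b^{f_1^*(n)+p_1(n)},\dots,b^{f_k^*(n)+p_k(n)}\big)$.

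Next, I would apply Leibman's equidistribution criterion on the nilmanifold $X^k=G^k/\Gamma^k$: a polynomial-type sequence $g(n)\Gamma^k$ is equidistributed on a sub-nilmanifold $Z\subset X^k$ with respect to $W$-averages provided that, for every non-trivial horizontal character $\chi\colon G^k\to\T$ (i.e.\ a continuous homomorphism trivial on $[G^k,G^k]\Gamma^k$), the sequence $\chi(g(n))$ is uniformly distributed mod $1$ with respect to $W$-averages. Since $b$ is fixed and only the exponents vary, $\chi(g(n))$ reduces to an expression of the form $c_1(f_1^*(n)+p_1(n))+\cdots+c_k(f_k^*(n)+p_k(n))$ modulo $1$, i.e.\ a function of the form $\phi(n)+q(n)$ with $\phi\in\sp\{f_j:j\in\mathcal{J}\}$ and $q\in\R[t]$. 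By part \ref{itm_Z1} of Lemma \ref{lem_simple_normal_form_new_new}, either this combination is bounded (in which case the character is trivial after adjustment), or property \ref{itm_Z1} together with Property \ref{property_P_W} puts us precisely in the setting of Corollary \ref{Cor_2claims}, which gives the required uniform distribution mod $1$ with respect to $W$-averages. This produces the measure $\nu$.

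For the ``moreover'' clauses I would argue as follows. Under condition \ref{itm_cond_2}, no nontrivial linear combination of the $f_i$ stays within bounded distance of a polynomial, so $\mathcal{I}=\emptyset$ in Lemma \ref{lem_simple_normal_form_new_new} and every nontrivial horizontal character yields a fully equidistributed projection; by the standard factorization of orbit closures on nilmanifolds this forces $\nu$ to be the Haar measure on the smallest closed $G^k$-invariant subnilmanifold containing $g(\N)\Gamma^k$, which in the diagonal-exponent setting is exactly $Y^k$ for $Y=\overline{\{b^t\Gamma:t\in\R\}}$. Under condition \ref{itm_cond_1}, I would use the condition $\poly(f_1,\dots,f_k)\subset\sps(q_1,\dots,q_\ell)$ together with joint intersectivity: by \cite[Proposition~6.1]{BLL08}, one finds a single divisible polynomial $q\in\Z[t]$ such that all $p_i\in q\R[t]$, hence along an arithmetic progression $n\equiv n_0\pmod m$ (for an appropriate divisor $m$ of $q$) all $p_i(n)$ land in $\Z$ and, via property \eqref{eq_rationalcondition}, so do the relevant integer parts $[f_i(n)]-f_i^*(n)+O(1)$. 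On this progression the $f_i^*(n)$ equidistribute on the torus part, so one can drive $g(n)\Gamma^k$ arbitrarily close to $1_{X^k}$, placing $1_{X^k}$ in $\supp(\nu)$.

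The main obstacle will be the careful verification of the Leibman-type criterion in the $W$-averaged regime: the classical Weyl/Leibman approach to equidistribution on nilmanifolds is stated for \Cesaro{} averages, and one has to substitute those with Hardy-field $W$-averages throughout, leaning on Lemma \ref{prop_Bosh_W-averages} and Corollary \ref{Cor_2claims} in place of Boshernitzan's classical theorem. A secondary technical point will be the joint identification of $\supp(\nu)$ under condition \ref{itm_cond_1}: one must simultaneously manage the non-polynomial directions (which equidistribute) and the polynomial directions (which must, on a subprogression, hit integer values compatible with the rounding introduced by property \eqref{eq_rationalcondition}) so that the orbit really does return arbitrarily close to the identity in $X^k$ rather than merely to a coset.
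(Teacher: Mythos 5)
Your opening reduction (decomposing $f_i=f_i^*+p_i+\epsilon_i$ via \cref{lem_simple_normal_form_new_new} and discarding the $o(1)$ errors) is indeed how the paper proceeds at a later stage, but the core of your argument has a genuine gap: the ``Leibman criterion'' you invoke is not a correct statement, and it cannot produce the measure $\nu$. Checking that $\chi(g(n))$ is uniformly distributed mod $1$ for \emph{every} nontrivial horizontal character of $G^k$ would prove equidistribution in all of $X^k$, which is generally false here; conversely, if the orbit closure is a proper subnilmanifold $Z\subsetneq X^k$ (the typical situation, e.g.\ when some combination of the $f_i$ is close to a polynomial), the characters annihilating $Z$ do not equidistribute, so the criterion as you state it never applies. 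Worse, even under \eqref{eq_rationalcondition} the orbit closure need not be a single connected subnilmanifold: for $f_1(t)=t$ and $b$ with $b^n\Gamma$ supported on finitely many components of $\overline{\{b^t\Gamma\}}$, the sequence only equidistributes along arithmetic progressions $qn+r$, with different limit sets $Y_r$ for different $r$. Identifying the orbit closure and proving equidistribution on it (with $W$-averages, and with this AP-splitting) is precisely the content of \cref{thm_E} ({\cite[Theorem E]{Richter20arXiv}}), which the paper uses as a black box and then averages over residues via \cref{lemma_WaveragesoverAPs}; the Leibman-type reduction to the maximal torus (\cref{thm_D}, via \cref{thm_nilequidistribution_2_2}) is only used afterwards to \emph{identify} each $Y_r$, not to establish existence of the limit. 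Your sketch implicitly assumes this main step rather than proving it.

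The ``moreover'' clauses also have concrete problems. Under condition \ref{itm_cond_1}, the polynomial parts $p_i$ lie in $\sps(q_1,\dots,q_\ell)$, i.e.\ in $q\R[t]$ after applying {\cite[Proposition 6.1]{BLL08}}; these are \emph{real} multiples of integer polynomials (e.g.\ $\sqrt2\,q(t)$) and do not take integer values along any arithmetic progression, so your step ``all $p_i(n)$ land in $\Z$'' fails, and integrality is in any case not the relevant mechanism. The paper instead identifies $Y_r$ as the closure of the explicit family $y_{t,n}$ (using \cref{thm_nilequidistribution_2_2}) and then applies {\cite[Proposition 2.3]{BLL08}} — recurrence of nilrotations along jointly intersective polynomial tuples — to place $1_{X^k}$ in $Y_r\subset\supp\nu$. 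Under condition \ref{itm_cond_2}, your claim that $\mathcal{I}=\emptyset$ is false: that condition only excludes \emph{integer} polynomials, so some $f\in\sp(f_1,\dots,f_k)$ may still be within $o(1)$ of a real polynomial with irrational coefficients; these polynomial directions must be handled, which the paper does by combining the parametrization of $Y_0$ with {\cite[Corollary 1.7]{Richter20arXiv}} and the observation that integer combinations of the $p_i$ are never rational polynomials modulo constants, yielding $\nu=\mu_{Y^k}$.
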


\begin{proof}[Proof of \cref{thm_lastuniformdistribution} assuming \cref{thm_nilequidistribution_2_1_new}]
Let $\tilde G=G\times\R$, let $\tilde\Gamma=\Gamma\times\Z$, let $\tilde X=\tilde G/\tilde\Gamma$ and let $b=(a,1)\in\tilde G$.
Let $\rho(t)=t-[t]$ for $t\in\R$ and let $\pi\colon \tilde X^k\to X^k$ be the map
$$\pi\big((g_1,t_1,\dots,g_k,t_k)\tilde\Gamma^k\big)= (a^{-\rho(t_1)}g_1,\dots,a^{-\rho(t_k)}g_k)\Gamma^k.$$
Observe that $\pi$ is well defined (i.e., the choice of the co-set representative does not matter in the definition of $\pi$) and that $\pi\big((b^{t_1},\dots,b^{t_k})\tilde\Gamma^k\big)=(a^{[t_1]},\dots,a^{[t_k]})\Gamma^k$ for every $t_1,\dots,t_k\in\R$.
However, we alert the reader that $\pi$ is not a continuous map.

From \cref{thm_nilequidistribution_2_1_new} it follows that the sequence $x_n:=(b^{f_1(n)},b^{f_2(n)},\dots,b^{f_k(n)})\tilde\Gamma^k$ is uniformly distributed with respect to $\tilde{\nu}$ and $W$-averages, where $\tilde{\nu}$ is some Borel probability measure on $\tilde{X}^k$.
We now consider two cases separately.
\begin{itemize}
  \item Case I: the map $\pi$ is $\tilde{\nu}$-a.e.\ continuous.

In this case, for any $H\in C(X^k)$ also $H\circ \pi$ is $\tilde{\nu}$-a.e.\ continuous and hence, since
$$H\bigg(\Big(a^{[f_1(n)]},a^{[f_2(n)]},\dots,a^{[f_k(n)]}\Big)\Gamma^k\bigg)= H\circ\pi (x_n),$$
it follows that the sequence defined in \eqref{eq_thm_removingfloors} is uniformly distributed with respect to the pushforward $\nu:=\pi_*\tilde{\nu}$ and $W$-averages.

If condition \ref{itm_cond_1} of Theorem~\ref{thm_main_combinatorial} is satisfied then \cref{thm_nilequidistribution_2_1_new} also implies that the point $1_{\tilde X^k}$ is in the support of $\tilde\nu$.
Since $\rho$ is continuous at $0$, the map $\pi$ is continuous at $1_{\tilde X^k}$ and, because $\nu=\pi_*\tilde\nu$, it follows that
the point $1_{X^k}=\pi(1_{\tilde X^k})$ belongs to the support of $\nu$.

If condition \ref{itm_cond_2} of Theorem~\ref{thm_main_combinatorial} is satisfied then \cref{thm_nilequidistribution_2_1_new} also implies that $\tilde\nu$ is the Haar measure on the subnilmanifold $\tilde Y^k$ where $\tilde Y\subset\tilde X$ is defined by $\tilde Y=\overline{\{b^t\Gamma:t\in\R\}}$.
Since $Y:=\{a^n\Gamma:n\in\Z\}$ satisfies $Y^k=\pi(\tilde Y^k)$ it follows that $\nu=\pi_*\tilde\nu$ is the Haar measure on $Y^k$.

\item Case II: $\pi$ is not $\tilde{\nu}$-a.e. continuous.

Then there exists $i\in\{1,\dots,k\}$ such that the set $D_i:=\{(g_1,t_1,\dots,g_k,t_k)\tilde\Gamma^k:\{t_i\}=1/2\}$ has $\tilde{\nu}(D_i)>0$, where $\{x\}=x-\lfloor x\rfloor$ denotes the fractional part.
After reordering we can assume $\mu(D_i)>0$ for $i=1,\dots,k_0$ and $\mu(D_i)=0$ for $i>k_0$.
Since the sequence $(x_n)_{n\in\N}$ is uniformly distributed with respect to $W$-averages and with respect to $\tilde{\nu}$, it follows that for each $i\leq k_0$,
$$
\lim_{\epsilon\to0}d_W\Big(\big\{n\in\N:\big\|f_i(n)-\tfrac{1}{2}\big\|_\T<\epsilon\big\}\Big)>0,
$$
where $\|x\|_\T= |x- [x]|$ is the distance to the closest integer.
In view of \cref{Cor_2claims}, there exists a polynomial $p_i\in\Q[x]$ such that $|f_i(n)-p_i(n)|\to0$ as $n\to\infty$.
In view of property \eqref{eq_rationalcondition}, for each $i\leq k_0$ we in fact have $p_i(x)-p_i(0)\in\Z[t]$.
This implies in particular that Case II is incompatible with condition \ref{itm_cond_2} of Theorem~\ref{thm_main_combinatorial}.

It now follows that the sequence $\big\{p_i(n)\big\}$ is constant and hence equals $1/2$, and therefore the sequence $\big\{f_i(n)\big\}$ converges to $1/2$.
This implies that all the accumulation points of the sequence $(x_n)_{n\in\N}$ lie inside $D_i$.
Therefore $\tilde{\nu}(D_i)=1$ for all $i\leq k_0$.
Hence $\supp\tilde{\nu}$ is a subset of $D:=\bigcap_{i=1}^{k_0}D_i$.
Since $\tilde{\nu}(D_i)=0$ for all $i> k_0$, it follows that the restriction of $\pi$ to $D$ is $\tilde{\nu}$-a.e. continuous\footnote{This may sound somewhat paradoxical since $\pi$ is discontinuous at every point in $D$, but this is the same phenomenon exhibited by the map $f:(x,y)\mapsto\lfloor x\rfloor$, which is discontinuous at every point in the vertical line $L=\{(0,y):y\in\R\}$, but whose restriction $\pi|_L$ is constant and hence continuous.}.
Therefore, we now restrict our attention to $D$.

Let $y_n=(a^{f_1(n)},1/2,\dots,a^{f_{k_0}(n)},1/2,b^{f_{k_0+1}(n)},\dots,b^{f_k(n)})\tilde\Gamma^k\in D\subset\tilde X^k$ and note that $d(x_n,y_n)\to0$ as $n\to\infty$ outside of a set of zero density, where $d$ is any compatible metric on $\tilde X^k$.
It follows that $(y_n)_{n\in\N}$ is also uniformly distributed with respect to $\tilde{\nu}$ and $W$-averages.
Unfortunately, it is not necessarily the case that $\pi(y_n)$ is getting close to $((a^{[f_1(n)]},\dots,a^{[f_k(n)]})\Gamma^k)$ (see \cref{example_annoying1} below).

Since for every $i\leq k_0$, the function $x\mapsto f_i(x)-p_i(x)$ is in $\Hardy$, it eventually stops changing sign.
Recalling that $p_i(n)-1/2$ is always an integer and $\rho$ has only jump discontinuities at $1/2+\Z$, we deduce that $\rho(f_i(n))=\rho(f_i(n)-p_i(n)+1/2)$ converges as $n\to\infty$ (to either $1/2$ or $-1/2$).
Let
\begin{equation}\label{eq_proof_thm_lastuniformdistribution}
c_i:=\rho\left(\frac12\right)
-\lim_{n\to\infty}\rho\big(f_i(n)\big)
\in\{-1,0\}.
\end{equation}
To ease the notation, set $c_i=0$ for all $i>k_0$.
Let $\psi:\tilde X^k\to X^k$ be the map obtained by composing $\pi$ with translation by the element $(a^{c_1},\dots,a^{c_k})\in G^k$, and let $\tilde\psi=\psi|_D$.
We claim that the sequence $(a^{[f_1(n)]},\dots,a^{[f_k(n)]})\Gamma^k$ is uniformly distributed with respect to the measure $\nu:=\tilde\psi_*\tilde{\nu}$ and $W$-averages.

Since $[f_i(n)]=f_i(n)-\rho(f_i(n))$, in view of \eqref{eq_proof_thm_lastuniformdistribution} after unraveling the definitions we have that
\begin{equation}\label{eq_distancezero}
\lim_{n\to\infty}d\Big(\big(a^{[f_1(n)]},\dots,a^{[f_k(n)]}\big)\Gamma^k,\tilde\psi(y_n)\Big)=0.
\end{equation}
If $H\in C(X^k)$, then $H\circ\tilde\psi$ is $\tilde{\nu}$-a.e. continuous, and therefore, from \eqref{eq_distancezero} and the fact that $(y_n)_{n\in\N}$ is  uniformly distributed with respect to $\tilde{\nu}$ and $W$-averages, we deduce that
\begin{eqnarray*}
  \int_{X^k}H\d\nu
  &=&
  \int_DH\circ\tilde\psi\d\mu_Y
  \\&=&
  \lim_{N\to\infty}\frac1{W(N)}\sum_{n=1}^Nw(n)H(\tilde\psi(y_n))
  \\&=&
  \lim_{N\to\infty}\frac1{W(N)}\sum_{n=1}^Nw(n) H\bigg(\Big(a^{[f_1(n)]},\dots,a^{[f_k(n)]}\Big)\Gamma^k\bigg),
\end{eqnarray*}
which proves the claim.

Finally, if $f_1,\dots,f_k$ satisfy condition \ref{itm_cond_1} of Theorem~\ref{thm_main_combinatorial}, then arguing as in Case I we see that the point $1_{G^k}\Gamma^k= 1_{X^k}$ belongs to the support of $\nu$.
Since we already saw above that, in Case II, condition \ref{itm_cond_2} of Theorem~\ref{thm_main_combinatorial} can not hold, this finishes the proof.
\end{itemize}

\begin{Example}\label{example_annoying1}
  Let $f_1(n)=n$, $f_2(n)=n-1/n$, $W(n)=n$, $[\cdot]=\lfloor\cdot\rfloor$, $G/\Gamma=\T=\R/\Z$ and $a\in\R\setminus\Q$ arbitrary.
  Then $\tilde G=\R^2$, $\tilde\Gamma=\Z^2$ and $\pi:\T^4\to\T^2$ is given by $\pi(x_1,t_1,x_2,t_2)=(x_1-a\{t_1\},x_2-a\{t_2\})$ and $(b^{f_1(n)},b^{f_2(n)})\Z^4$ equidistributes on $Y=\{(t,0,t,0):t\in\T\}$.
  In this case $\pi$ is discontinuous on $Y$, so it falls into Case II of the proof.
  Indeed $\pi_*\mu_Y$ is the Haar measure on the diagonal $\{(x,x):x\in\T\}\subset\T^2$, but the sequence $(\lfloor n\rfloor a,\lfloor n-1/n\rfloor a)$ is uniformly distributed on the set $\{(x,x-a):x\in\T\}\subset\T^2$.
\end{Example}
\end{proof}

\subsection{A Proof of \cref{thm_nilequidistribution_2_1_new}}

The purpose of this subsection is to give a proof of \cref{thm_nilequidistribution_2_1_new}. For this proof, we need two of the main results from \cite{Richter20arXiv}.

\begin{Theorem}[{\cite[Theorem E]{Richter20arXiv}}]
\label{thm_E}
Let $G$ be a connected and simply connected nilpotent Lie group, $\Gamma$ a uniform and discrete subgroup of $G$, and $\Hardy$ a Hardy field. Assume $W\in\Hardy$ satisfies $1\prec W(t)\ll t$ and $f_1,\ldots,f_k\in \Hardy$ satisfy property \ref{property_P_W}.
Let
$$
v(n)\coloneqq a_1^{f_1(n)}\cdot\ldots\cdot a_k^{f_k(n)},\qquad\forall n\in\N,
$$
where $a_1,\ldots,a_k\in G$ are commuting.
Then there exist $q\in\N$, a closed and connected subgroup $H$ of $G$, and points $x_0,x_1,\ldots,x_{q-1}\in X$ such that $Y_r\coloneqq H x_r$ is a closed sub-nilmanifold of $X$ and $(v(qn+r)\Gamma)_{n\in\N}$ is uniformly distributed with respect to $\mu_{Y_r}$ and $W$-averages for all $r=0,1,\ldots,q-1$.
\end{Theorem}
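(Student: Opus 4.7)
The strategy is to apply \cref{thm_E} to the product nilmanifold $X^k = G^k/\Gamma^k$ with an appropriate choice of commuting elements, producing $\nu$ as an average of Haar measures on sub-nilmanifolds, and then to analyze $\nu$ separately in each of the two ``moreover'' regimes. I would take the mutually commuting elements $a_i := (1_G, \dots, 1_G, b, 1_G, \dots, 1_G) \in G^k$, with $b$ in the $i$-th slot, so that $v(n) = a_1^{f_1(n)} \cdots a_k^{f_k(n)}$. Since $G^k$ is connected, simply connected, and nilpotent with $\Gamma^k$ uniform and discrete, \cref{thm_E} then yields some $q \in \N$, a closed connected subgroup $\tilde H \leq G^k$, and points $\tilde x_0, \dots, \tilde x_{q-1} \in X^k$ such that each $\tilde Y_r := \tilde H \tilde x_r$ is a closed sub-nilmanifold and $(v(qn+r)\Gamma^k)_n$ is uniformly distributed with respect to $\mu_{\tilde Y_r}$ and $W$-averages. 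I would then define $\nu := \tfrac{1}{q} \sum_{r=0}^{q-1} \mu_{\tilde Y_r}$; combining the per-AP equidistributions using \cref{lemma_WaveragesoverAPs} (with $R = q$) and the asymptotic $W(qN)/W(N) \to L \in [1, q]$ valid for any monotone $W \in \Hardy$ with $1 \prec W(t) \ll t$, this gives that $(v(n)\Gamma^k)_n$ is uniformly distributed with respect to $\nu$ and $W$-averages, which establishes the main claim.

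For the moreover assertion under condition \ref{itm_cond_2}, let $H \leq G$ be the closed connected subgroup with $Y = H\Gamma/\Gamma$. Since $v(n) \in H^k$ for every $n$, we already have $\supp\nu \subseteq Y^k$, and to upgrade this to $\nu = \mu_{Y^k}$ I would invoke the Leibman-type equidistribution criterion on nilmanifolds (in its $W$-average form): it suffices to verify that $\chi(v(n))$ equidistributes on $\T$ for every non-trivial horizontal character $\chi$ of $Y^k$. Any such $\chi$ factors as $\chi(y_1, \dots, y_k) = \chi_1(y_1) \cdots \chi_k(y_k)$, and the continuous homomorphism $t \mapsto \chi_i(b^t)\colon \R \to \T$ must take the form $\chi_i(b^t) = e(\lambda_i t)$ for some $\lambda_i \in \R$; hence $\chi(v(n)) = e(g(n))$ where $g = \sum_i \lambda_i f_i \in \sp(f_1, \dots, f_k)\setminus\{0\}$. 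Condition \ref{itm_cond_2} combined with property \ref{property_P_W} forces either $|g(t) - p(t)| \succ \log W(t)$ for all $p \in \R[t]$ (so that \cref{prop_Bosh_W-averages} provides the desired equidistribution), or else $g$ is close to some $p \in \R[t]\setminus\Z[t]$ that must have an irrational non-constant coefficient (so that Weyl's theorem together with the domination of $W$-averages by \Cesaro{} averages does).

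For the moreover assertion under condition \ref{itm_cond_1}, the goal is to place $1_{X^k}$ in $\supp\nu = \bigcup_r \tilde Y_r$. I would use \cite[Proposition 6.1]{BLL08} to replace the jointly intersective collection $q_1, \dots, q_\ell$ by a single divisible polynomial $\tilde q \in \Z[t]$ with $\poly(f_1, \dots, f_k) \subseteq \tilde q \R[t]$, decompose each $f_i = f_i^* + p_i + \epsilon_i$ via \cref{lem_simple_normal_form_new_new} (so $p_i \in \tilde q\R[t]$, $\epsilon_i \to 0$, and $f_i^*$ is a combination of unbounded non-polynomial Hardy functions), and then exhibit a residue class $r$ and a sequence $n_j \to \infty$ with $v(qn_j + r)\Gamma^k \to 1_{X^k}$; closedness of $\tilde Y_r$ will do the rest. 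Divisibility of $\tilde q$ yields APs along which the polynomial contributions $b^{p_i(qn+r)}$ can be arranged to lie close to $\Gamma$, while \cref{Cor_2claims} guarantees that the non-polynomial contributions $f_i^*(qn+r)$ equidistribute along these APs on an appropriate sub-nilmanifold of $Y^k$, returning near the identity with positive density. The main obstacle is precisely this last synthesis: when the one-parameter flow $(b^t)_{t \in \R}$ projects to an irrational rotation on the horizontal torus of $Y$, divisibility of $\tilde q$ does not automatically place $b^{p_i(n)}$ near $\Gamma$, and carrying this step out rigorously will require a careful Leibman-type factorization of $v(n)$ on the sub-nilmanifold $\tilde Y_r$ given by \cref{thm_E}, separating its periodic and aperiodic constituents and arranging for simultaneous returns of both to a common neighborhood of $1_{X^k}$ along one AP.
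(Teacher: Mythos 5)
Your proposal does not prove the assigned statement. \cref{thm_E} asserts, for a \emph{single} nilmanifold $X=G/\Gamma$ and commuting $a_1,\dots,a_k\in G$, the existence of $q$, of a closed connected subgroup $H$, and of points $x_0,\dots,x_{q-1}$ such that each $Y_r=Hx_r$ is a closed sub-nilmanifold and $(v(qn+r)\Gamma)_{n\in\N}$ equidistributes on $Y_r$ with respect to $W$-averages. Your first sentence reads ``apply \cref{thm_E} to the product nilmanifold $X^k$,'' so the argument is circular: you invoke the very theorem you were asked to establish. What you actually go on to prove --- the existence of a measure $\nu$ on $X^k$ for the sequence $(b^{f_1(n)},\dots,b^{f_k(n)})\Gamma^k$, together with the two ``moreover'' clauses under conditions \ref{itm_cond_1} and \ref{itm_cond_2} --- is the statement of \cref{thm_nilequidistribution_2_1_new}, a downstream consequence, and your outline for that result closely tracks the paper's own proof of it (the averaged measure $\nu=(\mu_{Y_0}+\cdots+\mu_{Y_{q-1}})/q$ via \cref{lemma_WaveragesoverAPs}, the reduction to the maximal factor torus for condition \ref{itm_cond_2}, and the use of \cref{lem_simple_normal_form_new_new} and \cite[Proposition 2.3]{BLL08} for condition \ref{itm_cond_1}). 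Even there your last paragraph concedes that the key synthesis step is not carried out.

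In this paper \cref{thm_E} carries no proof at all: it is imported verbatim as \cite[Theorem E]{Richter20arXiv}. A genuine proof would have to be built from the equidistribution machinery of that reference --- identifying the orbit closure of $(a_1^{t_1}\cdots a_k^{t_k})\Gamma$, reducing to the maximal factor torus via \cref{thm_D}-type arguments, and handling the Hardy-field exponents through Property \ref{property_P_W}, \cref{prop_Bosh_W-averages}, and van der Corput/Weyl estimates adapted to $W$-averages --- none of which appears in your proposal. As written, the proposal cannot be accepted as a proof of \cref{thm_E}.
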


Given an element $a$ in a simply connected nilpotent Lie group $G$ we write $\dom(a)$ for the set of all $t\in\R$ for which $a^t$ is a well defined element of the group. For example, a rational number $\frac{r}{q}$ with $\gcd(r,q)=1$ belongs to $\dom(a)$ if and only if there exists $b\in G$ such that $b^q=a^r$. Since $G$ is assumed to be simply connected, if such a $b$ exists then it is unique. It also follows from the assumption of $G$ being simply connected that $\dom(a)=\R$ if and only if $a\in G^\circ$.

Given a connected nilmanifold $X=G/\Gamma$, the \define{maximal factor torus} of $X$ is the quotient $[G^\circ,G^\circ]\backslash X$, where $G^\circ$ is the identity component of $G$. We will use $\vartheta\colon X\to [G^\circ,G^\circ]\backslash X$ to denote the factor map from $X$ onto $[G^\circ,G^\circ]\backslash X$.
Note that the maximal factor torus is the torus of maximal dimension that is a factor of the nilmanifold $X$.

\begin{Theorem}[{\cite[Theorem D]{Richter20arXiv}}]\label{thm_D}
Let $G$ be a simply connected nilpotent Lie group, $\Gamma$ a uniform and discrete subgroup of $G$, and assume the nilmanifold $X=G/\Gamma$ is connected. Let $\Hardy$ be a Hardy field, $W$ a function in $\Hardy$ satisfying $1\prec W(t)\ll t$, and $f_1,\ldots,f_k\in\Hardy$ functions in $\Hardy$ satisfying property \ref{property_P_W}.
Consider
$$
v(n)\coloneqq a_1^{f_1(n)}\cdot\ldots\cdot a_k^{f_k(n)},\qquad\forall n\in\N,
$$
where $a_1,\ldots,a_k\in G$ are commuting, and $f_i(\N)\subset \dom(a_i)$ for all $i=1,\ldots,k$.
Then the following are equivalent:
\begin{enumerate}
[label=(\roman{enumi}),ref=(\roman{enumi}),leftmargin=*]
\item
the sequence $(v(n)\Gamma)_{n\in\N}$ is uniformly distributed with respect to $W$-averages in the nilmanifold $X=G/\Gamma$.
\item
The sequence $(\vartheta(v(n)\Gamma))_{n\in\N}$ is uniformly distributed with respect to $W$-averages in the maximal factor torus $[G^\circ,G^\circ]\backslash X$.
\end{enumerate}
\end{Theorem}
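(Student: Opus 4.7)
The direction (i) $\Rightarrow$ (ii) is immediate: for any $F\in C([G^\circ,G^\circ]\backslash X)$ the pullback $F\circ\vartheta$ is continuous on $X$ with $\int_X F\circ\vartheta\,d\mu_X=\int F\,d\mu_{[G^\circ,G^\circ]\backslash X}$, so (i) transfers through $\vartheta$. The substance lies in the reverse implication, which I would attack by induction on the nilpotency step $s$ of $G$. For $s=1$, $G$ is abelian and $X$ coincides with its own maximal factor torus, so there is nothing to show.

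For the inductive step, I would use vertical Fourier decomposition. Since $G$ is connected and simply connected, the last nontrivial term $Z:=G_s$ of the lower central series is central, and $\Gamma_Z:=\Gamma\cap Z$ is uniform in $Z$, making $Z/\Gamma_Z$ a torus. This gives an orthogonal decomposition $L^2(X)=\bigoplus_\chi L^2_\chi(X)$ over characters $\chi$ of $Z/\Gamma_Z$, where $L^2_\chi$ consists of functions transforming by $\chi$ under translation by $Z$. Functions in $L^2_0$ (trivial character) descend to the $(s{-}1)$-step nilmanifold $X':=G/Z\Gamma$, whose maximal factor torus coincides with $[G^\circ,G^\circ]\backslash X$ because $Z\subset[G,G]$ when $s\geq 2$. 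Therefore (ii) for $X$ immediately implies (ii) for $X'$, and the inductive hypothesis yields equidistribution of $(v(n)Z\Gamma)_{n\in\N}$ in $X'$ with respect to $W$-averages; by density of $\bigoplus_\chi L^2_\chi\cap C(X)$ in $C(X)$ and uniform approximation, it remains to handle the nontrivial characters.

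For a fixed nontrivial character $\chi$ and $F\in L^2_\chi\cap C(X)$, I would apply the $W$-averaged van der Corput lemma (Lemma \ref{prop_vdC}) to the sequence $u_N(n)=F(v(n)\Gamma)$, reducing a bound on the squared $W$-average of $F(v(n)\Gamma)$ to a double $W$-average over $h$ and $n$ of correlations $\langle F(v(n+h)\Gamma),F(v(n)\Gamma)\rangle$. Using the Baker--Campbell--Hausdorff formula and commutativity of the $a_i$, the differenced element $v(n+h)v(n)^{-1}$ has the form $\prod_i a_i^{f_i(n+h)-f_i(n)}$ multiplied by correction terms lying deeper in the lower central series. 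Property \ref{property_P_W} is preserved, up to logarithmic constants, by finite differencing in $h$, so the shifted family $\{f_i(\cdot+h)-f_i(\cdot)\}$ still satisfies \ref{property_P_W}. The nontriviality of $\chi$ implies that the relevant correlation, viewed as a function on $X\times X$, is annihilated by averaging over any orbit on which the $Z$-coordinate is equidistributed; concretely, it translates into a nontrivial character evaluated at a coordinate that sits inside a subtorus of the maximal factor torus. Applying assumption (ii) to the differenced sequence for $\mu_h$-almost every $h$ then shows that the inner $W$-average tends to $0$, and a final averaging in $h$ closes the induction.

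The main obstacle will be the combinatorial and analytic bookkeeping in the last step: showing that for the correlations arising from nontrivial central characters one can genuinely reduce to toral equidistribution of the \emph{shifted} Hardy field sequences, with enough uniformity in $h$ to allow the outer van der Corput average in $h$ to be taken. A secondary subtlety is ruling out degenerate $h$ for which $v(n+h)v(n)^{-1}$ collapses into a proper sub-nilmanifold incompatible with the torus equidistribution hypothesis; this is prevented precisely by \ref{property_P_W}, which forces every unbounded integer linear combination of the $f_i$ to outgrow $\log W(t)$ and hence keeps the differenced sequences non-degenerate. This structural input is exactly what the equidistribution framework in \cite{Richter20arXiv} provides.
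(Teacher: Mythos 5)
The first thing to note is that the paper does not prove \cref{thm_D} at all: it is imported verbatim from \cite[Theorem D]{Richter20arXiv} and used here as a black box (via \cref{thm_nilequidistribution_2_2} in the proof of \cref{thm_nilequidistribution_2_1_new}). So there is no internal proof to compare your argument with; your proposal has to stand on its own as a proof of the cited result, and as written it does not.

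Your outline is the classical vertical-character/van der Corput strategy of Leibman--Green--Tao--Frantzikinakis type; the implication (i)$\Rightarrow$(ii) and the reduction of the trivial-character component to the lower-step quotient of $X$ by the last nontrivial term of the lower central series are fine. But the steps that carry all the weight are missing or unjustified. First, you assert that Property \ref{property_P_W} is preserved ``up to logarithmic constants'' by the differencing $f\mapsto f(\cdot+h)-f(\cdot)$. This is plausible precisely because \ref{property_P_W} is formulated for the $\nabla$-span, which is closed under differentiation, and $f(t+h)-f(t)\sim hf'(t)$ for Hardy-field functions; but it is a real claim needing Hardy-field calculus (control of $f(t+h)-f(t)-hf'(t)$, with uniformity in $h$ sufficient for the outer average), and you give no argument. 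Second, and more seriously, you ``apply assumption (ii) to the differenced sequence''. Hypothesis (ii) concerns only the projection of the original sequence $v(n)\Gamma$ to $[G^\circ,G^\circ]\backslash X$; it says nothing about differenced sequences, nor about the pair sequence $\big(v(n),v(n+h)\big)\Gamma^2$ in $X\times X$ on which the post-van der Corput correlations actually live. Equidistribution of these auxiliary sequences in the appropriate subtori must be derived, and your inductive scheme is never set up: it is not explained on what quantity the induction runs, nor how a lower-step nilmanifold emerges from $X\times X$ modulo the diagonal centre. Third, the heart of every theorem of this type is the conversion of a nontrivial vertical-character obstruction into a nontrivial \emph{horizontal} character of the original maximal factor torus, so as to contradict (ii); this step is entirely absent from your sketch. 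Finally, note that $G$ is only assumed simply connected, not connected, and the $a_i$ may lie outside $G^\circ$ with exponents restricted to $\dom(a_i)$; your reduction tacitly assumes $G$ connected, whereas handling the disconnected part (which is why the torus is $[G^\circ,G^\circ]\backslash X$ rather than $[G,G]\backslash X$) is part of the content of the theorem.
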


We actually need the following corollary of \cref{thm_D}.

\begin{Corollary}\label{thm_nilequidistribution_2_2}
Let $G$ be a simply connected nilpotent Lie group, let $\Gamma\subset G$ be a uniform and discrete subgroup, and suppose $X=G/\Gamma$ is connected. Let $a_1,\ldots,a_k\in G^\circ$ and $b_1,\ldots,b_l\in G$ have the property that any two elements in $\{a_1,\ldots,a_k,b_1,\ldots,b_l\}$ commute.
Let $W\in\Hardy$ satisfy $1\prec W(t)\ll t$, and suppose $g_1,\ldots,g_m\in \Hardy$ have the following properties:
\begin{enumerate}
[label=(\Alph{enumi}),ref=(\Alph{enumi}),leftmargin=*]
\item
\label{itm_G0}
$\{g_1,\ldots,g_m\}$ satisfies Property \ref{property_P_W};
\item
\label{itm_G2}
$|g(t)-p(t)|\to\infty$ for any $g\in\sp(g_1,\ldots,g_m)$ and $p\in\R[t]$.
\end{enumerate}
Let $\lambda_{1,1},\ldots,\lambda_{k,m}\in\R$, let $p_1,\ldots,p_l\in\R[t]$, define $\phi_{i}(t_1,\ldots,t_m)\coloneqq \sum_{j=1}^m \lambda_{i,j} t_j$, and assume that $p_i(\N)\subset\dom(b_i)$ for all $i\leq l$ and that the set
$$
\Big\{a_1^{\phi_1(t_1,\ldots,t_m)}\cdot \ldots\cdot a_k^{\phi_k(t_1,\ldots,t_m)}\cdot b_1^{p_1(n)}\cdot\ldots\cdot b_l^{p_l(n)}\Gamma: t_1,\ldots, t_m\in\R,~n\in\Z\Big\}
$$
is dense in $X$.

Then the sequence
$$
\big(a_1^{\phi_1(g_1(n),\ldots,g_m(n))}\cdot \ldots\cdot a_k^{\phi_k(g_1(n),\ldots,g_m(n))}\cdot b_1^{p_1(n)}\cdot \ldots\cdot b_l^{p_l(n)}\Gamma\big)_{n\in\N}
$$
is uniformly distributed in $X$ with respect to $W$-averages.
\end{Corollary}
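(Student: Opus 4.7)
The plan is to apply \cref{thm_D} to reduce the problem to equidistribution on the maximal factor torus, and then to verify Weyl's criterion character-by-character, using the density hypothesis together with assumptions \ref{itm_G0}--\ref{itm_G2}.

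First I would set $v(n) := \prod_i a_i^{\phi_i(g_1(n),\ldots,g_m(n))} \prod_j b_j^{p_j(n)}$. Since $a_i \in G^\circ$ and $G$ is simply connected, each $a_i^{\lambda_{i,j}}$ is a well-defined element of $G$, and the commutativity hypothesis allows us to rewrite
\[
v(n) = \prod_{i,j}\bigl(a_i^{\lambda_{i,j}}\bigr)^{g_j(n)}\,\prod_j b_j^{p_j(n)},
\]
a product of commuting one-parameter subgroups indexed by the functions $g_j$ and $p_j$. The combined family $\{g_1,\ldots,g_m,p_1,\ldots,p_l\}$ satisfies Property \ref{property_P_W}: any $f$ in its $\nabla$-span decomposes as $\tilde f + r$ with $\tilde f \in \Hsp(g_1,\ldots,g_m)$ and $r \in \R[t]$, so $|f - p| = |\tilde f - (p - r)|$ is either $\ll 1$ or $\succ \log W(t)$ by the hypothesis on $\{g_j\}$. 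Therefore \cref{thm_D} applies and reduces the problem to showing that $(\vartheta(v(n)\Gamma))_{n\in\N}$ is uniformly distributed with respect to $W$-averages in the maximal factor torus $T := [G,G]\backslash X$.

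Since $G$ is simply connected, $T$ is isomorphic to a torus $\R^N/L$ for a lattice $L$. Writing $\bar a_i, \bar b_j$ for the images in $\R^N$ and setting $\tilde\alpha_j := \sum_i \lambda_{i,j}\bar a_i$, the sequence on $T$ becomes $\bar v(n) = \sum_j g_j(n)\tilde\alpha_j + \sum_j p_j(n)\bar b_j$ modulo $L$. By Weyl's criterion, the task reduces to showing that for every nonzero $\xi \in L^*$,
\[
\frac{1}{W(N)}\sum_{n=1}^N w(n)\, e\!\Big(\sum_j \mu_j g_j(n) + q(n)\Big) \xrightarrow{N\to\infty} 0,
\]
where $\mu_j := \langle\xi,\tilde\alpha_j\rangle$ and $q(t) := \sum_j \langle\xi,\bar b_j\rangle p_j(t) \in \R[t]$. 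I would then split into two cases. In Case A (some $\mu_j \neq 0$), the function $f^* := \sum_j \mu_j g_j$ is a nonzero element of $\sp(g_1,\ldots,g_m)$, so \ref{itm_G2} gives $|f^* - r|\to\infty$ for every $r \in \R[t]$; combined with Property \ref{property_P_W}, this upgrades to $|f^* - r| \succ \log W(t)$ for every $r \in \R[t]$, hence $|F - r| \succ \log W(t)$ for every $r \in \R[t]$ where $F := f^* + q$. The argument in the proof of \cref{Cor_2claims} — subtracting a polynomial approximation to land $F$ in the growth range $t^{\ell-1}\log W(t) \prec \cdot \prec t^\ell$ required by \cref{prop_Bosh_W-averages}, and passing to subprogressions to absorb any integer-valued polynomial correction — then shows that $(F(n))_{n\in\N}$ is uniformly distributed modulo $1$ with respect to $W$-averages, so the Weyl sum vanishes.

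In Case B (all $\mu_j = 0$) the exponent reduces to $q(n)$ and the density hypothesis carries the argument. Letting $V$ denote the $\R$-span of $\{\tilde\alpha_j\}$ in $\R^N$, the character $\xi$ vanishes on $V$ and is nonzero, hence descends to a nontrivial character of the quotient torus $T' := \R^N/\overline{V+L}$. The density hypothesis implies that the polynomial orbit $\bigl(\sum_j p_j(n)\bar b_j\bigr)_{n\in\Z}$ projects to a dense subset of $T'$, and the classical Weyl theorem for polynomial sequences on a torus (density forces equidistribution) gives $\tfrac1N\sum_{n=1}^N e(q(n)) \to 0$. The standard implication that Cesaro convergence transfers to $W$-averages (for $w$ eventually monotone with $W(N)\to\infty$ and $w(n)/W(N)\to 0$) completes Case B. The main obstacle is ensuring that the argument of \cref{Cor_2claims} applies cleanly in Case A with a real (rather than rational) polynomial $q$, but the $\Q[t]$-versus-not dichotomy inside that proof is precisely what handles this.
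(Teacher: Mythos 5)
Your overall route is the same as the paper's: reduce via \cref{thm_D} to the maximal factor torus, then verify Weyl's criterion there, using the argument of \cref{Cor_2claims} together with \ref{itm_G0} and \ref{itm_G2} for the characters that see the Hardy-field part, and the density hypothesis plus Weyl's polynomial equidistribution theorem for the characters that only see the polynomial part. The gap is in the step where you pass to torus coordinates: you assert that the projection of the sequence is $\sum_j g_j(n)\tilde\alpha_j+\sum_j p_j(n)\bar b_j$ modulo a lattice, i.e.\ that $b_j^{p_j(n)}\Gamma$ projects to a point depending \emph{linearly} on $p_j(n)$. This is unjustified and false in general, precisely because the $b_j$ are allowed to lie outside $G^\circ$ (which is why the statement distinguishes them from the $a_i$ and assumes $p_i(\N)\subset\dom(b_i)$). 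If $b=c\gamma$ with $c\in G^\circ$, $\gamma\in\Gamma$, the conjugation of $\gamma$ on $G^\circ$ is a nontrivial unipotent map in general, and the image of $b^{s}\Gamma$ in the torus is a polynomial of degree $>1$ in $s$; e.g.\ for $G=\R^2\rtimes_U\Z$ with $U$ unipotent, $\Gamma=\Z^2\rtimes\Z$ and $b=(v,1)$, the torus coordinate of $b^n\Gamma$ is $nv+\binom{n}{2}(U-I)v$ modulo $\Z^2$, which is quadratic in $n$. The paper handles exactly this point by using connectedness of $X$ (so $G^\circ\Gamma=G$) and the result cited from \cite{Leibman05a} to rewrite $b_1^{p_1(n)}\cdots b_l^{p_l(n)}\Gamma=e_1^{q_1(n)}\cdots e_s^{q_s(n)}\Gamma$ with $e_i\in G^\circ$ and $q_i\in\Z[t]$; only after this conversion is the projected sequence a genuine polynomial sequence on the torus (note the $e_i$ need not commute with the $a_j$, which is harmless after projecting). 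Your two cases (some $\mu_j\neq0$ versus all $\mu_j=0$) would go through essentially unchanged once the exponent is known to be of the form $\sum_j\mu_jg_j(n)+\tilde q(n)$ with $\tilde q\in\R[t]$, but as written the formula you analyze is not the correct projection.

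A secondary inaccuracy of the same origin: the maximal factor torus in \cref{thm_D} is $[G^\circ,G^\circ]\backslash X$, not $[G,G]\backslash X$. When $G$ is disconnected these can differ ($[G,G]$ may be strictly larger), and uniform distribution on the coarser quotient $[G,G]\backslash X$ would not suffice to invoke \cref{thm_D}. Both issues come from treating $G$ as connected; the hypotheses of the corollary are designed precisely for the disconnected case.
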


\begin{proof}
According to \cref{thm_D} it suffices to show that the projection of the sequence
$$
\big(a_1^{\phi_1(g_1(n),\ldots,g_m(n))}\cdot \ldots\cdot a_k^{\phi_k(g_1(n),\ldots,g_m(n))}\cdot b_1^{p_1(n)}\cdot \ldots\cdot b_l^{p_l(n)}\Gamma\big)_{n\in\N}
$$
onto the maximal factor torus $[G^\circ,G^\circ]\backslash X$ is uniformly distributed with respect to $W$-averages there.
Since $X=G/\Gamma$ is connected, we have that $G^\circ \Gamma=\Gamma$.
This implies (see \cite[Subsection 2.6]{Leibman05a}) that there exist $e_1,\ldots,e_s\in G^\circ$ and $q_1,\ldots,q_s\in\Z[t]$ such that
$$
b_1^{p_1(n)}\cdot \ldots\cdot b_l^{p_l(n)}\Gamma~=~e_1^{q_1(n)}\cdot \ldots\cdot e_s^{q_s(n)}\Gamma.
$$
It thus suffices to show that the projection of the sequence
\begin{equation}
\label{eqn_new_seq_bp_1}
\big(a_1^{\phi_1(g_1(n),\ldots,g_m(n))}\cdot \ldots\cdot a_k^{\phi_k(g_1(n),\ldots,g_m(n))}\cdot e_1^{q_1(n)}\cdot \ldots\cdot e_s^{q_s(n)}\Gamma\big)_{n\in\N}
\end{equation}
is uniformly distributed with respect to $W$-averages in the maximal factor torus $[G^\circ,G^\circ]\backslash X$.
Note that since $X$ is connected, the embedding $G^\circ \hookrightarrow G$ descents to an isomorphism between the finite dimensional torus $G^\circ/([G^\circ,G^\circ]\Gamma)$ and the quotient $[G^\circ,G^\circ]\backslash X$.
The down-side of replacing $b_1^{p_1(n)}\cdot \ldots\cdot b_l^{p_l(n)}\Gamma$ with $e_1^{q_1(n)}\cdot \ldots\cdot e_s^{q_s(n)}\Gamma$ is that the $e_i$s and $a_j$s are not necessarily commuting. However, after projecting onto $[G^\circ,G^\circ]\backslash X$, this doesn't matter. On the flip side, the advantage of replacing $b_1^{p_1(n)}\cdot \ldots\cdot b_l^{p_l(n)}\Gamma$ with $e_1^{q_1(n)}\cdot \ldots\cdot e_s^{q_s(n)}\Gamma$ is that the action of the sequence \eqref{eqn_new_seq_bp_1} remains an action by translation when passed through the isomorphism that identifies $[G^\circ,G^\circ]\backslash X$ with $G^\circ/([G^\circ,G^\circ]\Gamma)$ because its generators $a_1,\ldots,a_k,e_1,\ldots, e_s$ belong to $G^\circ$. Working within translations on $G^\circ/([G^\circ,G^\circ]\Gamma)$ is convenient since it enables us to use uniform distribution results for finite dimensional tori from \cref{sec_prelims}.

Finally, the fact that the projection of
$$
\big(a_1^{\phi_1(g_1(n),\ldots,g_m(n))}\cdot \ldots\cdot a_k^{\phi_k(g_1(n),\ldots,g_m(n))}\cdot e_1^{q_1(n)}\cdot \ldots\cdot e_s^{q_s(n)}\Gamma\big)_{n\in\N}
$$
onto $G^\circ/([G^\circ,G^\circ]\Gamma)$ is uniformly distributed follows from the fact that the projection of the set
$$
\overline{\Big\{a_1^{\phi_1(t_1,\ldots,t_m)}\cdot \ldots\cdot a_k^{\phi_k(t_1,\ldots,t_m)}\cdot e_1^{q_1(n)},\cdot \ldots\cdot e_s^{q_s(n)}\Gamma: t_1,\ldots, t_m\in\R,~n\in\Z\Big\}}
$$
is dense, together with properties \ref{itm_G0} and \ref{itm_G2}, the Weyl criterion and \cref{prop_Bosh_W-averages}.
\end{proof}

\begin{proof}[Proof of \cref{thm_nilequidistribution_2_1_new}]
Replacing $X$ with the subnilmanifold $\overline{\{b^t\Gamma:t\in\R\}}$ we may assume that the set $\{b^t\Gamma:t\in\R\}$ is dense in $X$.

It follows directly from \cref{thm_E} that there exist $q\in\N$ and connected sub-nilmanifolds $Y_0 , Y_1, \ldots, Y_{q-1}$ of $X^k$ such that for every $r\in\{0,1,\ldots,q-1\}$ the sequence
$$
\big(\big(b^{f_1(qn+r)},b^{f_2(qn+r)},\dots,b^{f_k(qn+r)}\big)\Gamma^k\big)_{n\in\N}
$$
is uniformly distributed with respect to the Haar measure $\mu_{Y_r}$ of $Y_r$ and $W$-averages.
Thus, if we set $\nu\coloneqq (\mu_{Y_0}+\ldots+\mu_{Y_{q-1}})/q$ then it follows from \cref{lemma_WaveragesoverAPs} that the sequence $\big(\big(b^{f_1(n)},\dots,b^{f_k(n)}\big)\Gamma^k\big)_{n\in\N}$ is uniformly distributed with respect to $\nu$ and $W$-averages.
This proves the first part of \cref{thm_nilequidistribution_2_1_new}.

Next we prove that if condition \ref{itm_cond_1} of Theorem~\ref{thm_main_combinatorial} holds, then $1_X^k\in\supp\nu$.
Let $q_1,\dots,q_\ell\in\Z[x]$ be jointly intersective polynomials such that $
\poly(f_1,\dots,f_k)\subset \sp(q_1,\dots,q_\ell)
$.
We claim that there exists $r\in\{0,\dots,q-1\}$ such that the polynomials $\tilde q_i:n\mapsto q_i(qn+r)$, $i\in\{1,\dots,\ell\}$, are jointly intersective. Indeed, since $q_1,\dots,q_\ell$ are jointly intersective, for any $w\in\N$ there exists $n_w\in\N$ such that $q_i(n_w)\equiv 0\bmod w!$ for all $i\in\{1,\dots,\ell\}$. By the pigeonhole principle, there exists $r\in \{0,\dots,q-1\}$ such that $n_w\equiv r\bmod q$ for infinitely many $w$. With this choice of $r$, it follows that the polynomials $\tilde q_1,\ldots, \tilde q_\ell$ have a common zero modulo $w!$ for infinitely many $w\in\N$. But this implies that $\tilde q_1,\ldots, \tilde q_\ell$ have a common zero modulo any number, proving that they are jointly intersective.
We shall show that $1_{X^k}\in Y_r$.

Define, for all $i\in\{1,\ldots,k\}$, the function $\tilde f_i(t)\coloneqq f_i(qt+r)$, and let
$\mathcal{P}\,\coloneqq \poly(\tilde f_1,\dots,\tilde f_k)$.
Observe that $\mathcal{P}\subset\sp(\tilde q_1,\dots,\tilde q_\ell)$.
Applying \cref{lem_simple_normal_form_new_new} to $\tilde f_1,\ldots,\tilde f_k$ we find disjoint $\mathcal{I},\mathcal{J}\subset\{1,\ldots,k\}$ with $\mathcal{I}\cup\mathcal{J}=\{1,\ldots,k\}$, $\{\lambda_{i,j}: i\in\mathcal{I}, j\in\mathcal{J}\}\subset\R$, and $\{p_i: i\in\mathcal{I}\}\subset\mathcal{P}$ such that conditions \ref{itm_Z1} and \ref{itm_Z2} of \cref{lem_simple_normal_form_new_new} are satisfied. After reordering $\tilde f_1,\ldots,\tilde f_k$ if necessary, we can assume that $\mathcal{J}=\{1,\ldots,l\}$ and $\mathcal{I}=\{l+1,\ldots,k\}$ for some $l\in\{0,1,\ldots,k\}$ (where $l=0$ corresponds to the case $\mathcal{J}=\emptyset$ and $l=k$ to the case $\mathcal{I}=\emptyset$). For $i\in\{l+1,\ldots,k\}$ define
$$
\phi_i(t_1,\ldots,t_l)\, =\, \sum_{j=1}^l \lambda_{i,j} t_j\quad\text{and}\quad f_i^*(t)=\phi_i\big(\tilde f_1(t),\dots,\tilde f_l(t)\big)+p_i(t)
$$
and observe that $|f_{i}(t)-f_i^*(t)|\to 0$ as $t\to\infty$.
Therefore
\begin{equation}
\label{eqn_distance_goes_to_0}
d\Big(\big(b^{\tilde f_1(n)},\dots,b^{\tilde f_k(n)}\big)\Gamma^k,\big(b^{\tilde f_1(n)},\ldots, b^{\tilde f_l(n)}, b^{f_{l+1}^*(n)},\dots,b^{f_k^*(n)}\big)\Gamma^k\Big)\to0
\end{equation}
as $n\to\infty$.
Since the sequence $((b^{\tilde f_1(n)},\dots,b^{\tilde f_k(n)})\Gamma^k)_{n\in\N}$ is uniformly distributed with respect to $\mu_{Y_r}$ and $W$-averages, it follows from \eqref{eqn_distance_goes_to_0} that the same is true for the sequence $\Big(\big(b^{\tilde f_1(n)},\ldots, b^{\tilde f_l(n)}, b^{f_{l+1}^*(n)},\dots,b^{f_k^*(n)}\big)\Gamma^k\Big)_{n\in\N}$.
In view of \cref{thm_nilequidistribution_2_2}, we therefore must have that $Y_r=\overline{\{y_{t,n}:t\in\R^l,n\in\N\}}$, where for $t=(t_1,\dots,t_l)\in\R^l$ and $n\in\N$ we define
\begin{equation}\label{eq_yetanothereq}
y_{t,n}=\big(b^{t_1},\ldots, b^{t_l}, b^{\phi_{l+1}(t_1,\ldots,t_l)+p_{l+1}(n)},\dots,b^{\phi_{k}(t_1,\ldots,t_l)+p_{k}(n)}\big)\Gamma^k.
\end{equation}
It now follows from \cite[Proposition 2.3]{BLL08} that the sequence $(y_{0,n})_{n\in\N}$ has $1_{X^k}$ as an accumulation point.

Finally, we assume that condition \ref{itm_cond_2} of Theorem~\ref{thm_main_combinatorial} holds.
We will show that in this case $Y_0=X^k$ (and in particular $q=1$).
Following the same procedure as above but with $r=0$, we conclude that $Y_0=\overline{\{y_{t,n}:t\in\R^l,n\in\N\}}$ where $y_{t,n}$ is given by \eqref{eq_yetanothereq}.

We claim that for every fixed $t\in\R^l$, the sequence $(y_{t,n})_{n\in\N}$ is dense in the set $\{b^{t_1}\Gamma\}\times\cdots\times\{b^{t_l}\Gamma\}\times X^{k-l}$.
Indeed, this follows from \cite[Corollary 1.7]{Richter20arXiv} together with condition \ref{itm_cond_2} of Theorem~\ref{thm_main_combinatorial} (which implies that any integer linear combination $p$ of the polynomials $p_{l+1},\dots,p_k$ must satisfy $p-p(0)\notin\Q[x]$) and our assumption that $\{b^t\Gamma:t\in\R\}$ is dense in $X$.
Therefore it follows that the set $\{y_{t,n}:t\in\R^l,n\in\N\}$ is dense in $X^k$ and hence that $Y_0=X^k$ as desired.
\end{proof}

\section{Open questions}
\label{sec_questions}
In this section we collect some pertinent open questions and conjectures.

Let $f_1,\dots,f_k$ be linearly independent functions of the form $f_i(t)=a_1t^{c_1}+\cdots+a_dt^{c_d}$ where $a_i,c_i\in\R$, $c_i>0$.
If all the $f_i$ are integer polynomials, then we know from \cite{FK06} that for any totally ergodic system $(X,\B,\mu,T)$ and any $h_1,\dots,h_k\in L^\infty(X)$,
\begin{equation}\label{eq_conjecture1}
    \lim_{N\to\infty}\frac1N\sum_{n=1}^NT^{f_1(n)}h_1\cdot \ldots\cdot T^{f_k(n)}h_k=\int_Xh_1\d\mu\cdot\ldots\cdot\int_Xh_k\d\mu.
\end{equation}
On the other hand, if all the $c_i$ are non-integers, then it follows from \cref{example_rightlimit} that for any ergodic system $(X,\B,\mu,T)$ and any $h_1,\dots,h_k\in L^\infty(X)$,
\begin{equation}\label{eq_conjecture7}
    \lim_{N\to\infty}\frac1N\sum_{n=1}^NT^{[f_1(n)]}h_1\cdot \ldots\cdot T^{[f_k(n)]}h_k=\int_Xh_1\d\mu\cdot\ldots\cdot\int_Xh_k\d\mu.
\end{equation}

The following conjecture expands on the above observations and is supported by multiple results and conjectures involving the notion of \emph{joint ergodicity}, including \cite[Theorem 2.3]{Frantzikinakis15b}, \cite[Theorem 1.7]{Frantzikinakis21arXiv}, \cite[Conjecture 1.5]{DKW21}, \cite{BB84}, \cite{BLS16}.

\begin{Conjecture}
\label{conj_6-1-new}
Let $f_1,\dots,f_k$ be functions of the form $f_i(t)=a_1t^{c_1}+\cdots+a_dt^{c_d}$ where $a_i,c_i\in\R$, $c_i>0$. Let $(X,\B,\mu,T_1,\ldots,T_k)$ be a measure preserving system with $k$ commuting (and invertible) measure-preserving transformations.
Suppose
\begin{itemize}
\item
for all $h\in L^\infty(X)$ and $i\neq j$ one has
\[
\frac1N\sum_{n=1}^N T_i^{[f_i(n)]}T_j^{-[f_j(n)]} h \xrightarrow[N\to\infty]{}\int_Xh\d\mu,
\]
where the convergence takes place in $L^2(X)$.
\item
for all $H\in L^\infty(X^k)$ one has
\[
\lim_{N\to\infty}\frac1N\sum_{n=1}^N \big(T_1^{[f_1(n)]}\times\ldots\times T_k^{[f_k(n)]}\big) H=\int_{X^k} H\d\mu^k
\]
where the convergence takes place in $L^2(X^k)$.
\end{itemize}
Then for all $h_1,\dots,h_k\in L^\infty(X)$ we have
\[
\lim_{N\to\infty}\frac1N\sum_{n=1}^N T_1^{[f_1(n)]}h_1\cdot \ldots\cdot T_k^{[f_k(n)]}h_k=\int_Xh_1\d\mu\cdot\ldots\cdot\int_Xh_k\d\mu
\]
where the convergence takes place in $L^2(X)$.
\end{Conjecture}
\begin{Remark}
It may well be the case that \cref{conj_6-1-new} is true for \emph{any} functions $f_1,\ldots,f_k$ in a Hardy field $\Hardy$.
\end{Remark}

\begin{Conjecture}
  Let $f_1,\ldots,f_k$ be functions of polynomial growth from a Hardy field $\Hardy$, let $W$ be a compatible weight, let $(X,\B,\mu,T)$ be an invertible measure preserving system, let $h_0,\ldots,h_k\in L^\infty(X)$ and consider the multicorrelation sequence
\begin{equation*}
\alpha(n):=\int_Xh_0\cdot T^{[f_1(n)]}h_1\cdots T^{[f_k(n)]}h_k\d\mu.
\end{equation*}
Then there exists a nilmanifold $Y=G/\Gamma$, a continuous function $F\in C(Y)$, a point $y\in Y$ and $a_1,\dots,a_k\in G$ such that
$$\alpha(n)=F\left(a_1^{[f_1(n)]}\cdots a_k^{[f_k(n)]}y\right)+\nu(n),\ \ n\in\N$$
where $\nu$ satisfies
$$\lim_{N\to\infty}\frac1{W(N)}\sum_{n=1}^Nw(n)\big|\nu(n)\big|=0.$$
\end{Conjecture}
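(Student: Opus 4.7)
My plan is to mirror the overall strategy used in the proof of \cref{thm_main}: first reduce to the case of an ergodic nilsystem by invoking the characteristic-factor results of \cref{sec_charfac}, and then construct the nilsequence explicitly using the equidistribution machinery of \cref{sec_nilfac} and \cite{Richter20arXiv}. The difference from \cref{thm_main} is that we must track the multicorrelation sequence \emph{pointwise} in $n$ and not merely its $W$-average.

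First, using \cref{prop_characteristic_factor_main} one shows that if any $h_i$ is replaced by $h_i - \E[h_i\mid\mathcal{Z}_s]$, where $\mathcal{Z}_s$ is the Host-Kra factor of \cref{thm_hk} (with $s$ large enough so that $\lhk h_i-\E[h_i\mid\mathcal Z_s]\rhk_s=0$), then the resulting contribution to $\alpha(n)$ has vanishing $W$-average absolute value; this difference gets absorbed into the sought null term $\nu$. Via the inverse-limit structure of pro-nilsystems and standard $L^p$-approximation of $L^\infty$ functions by continuous functions, together with \cref{prop_malcev}, this reduces the problem to the case where $(X,\B,\mu,T)$ is an ergodic nilsystem $X=G/\Gamma$ with $G$ a connected and simply connected nilpotent Lie group, $T$ is left multiplication by an element $a\in G$, and each $h_i\in C(X)$ is continuous.

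Next, one would realize $\alpha$ directly as a nilsequence on a larger nilmanifold. Form $\hat X:=X^{k+1}=G^{k+1}/\Gamma^{k+1}$ and the continuous function $F\colon\hat X\to\C$ given by $F(x_0,x_1,\ldots,x_k):=h_0(x_0)h_1(x_1)\cdots h_k(x_k)$. Then
\[
\alpha(n)=\int_X F\!\big(x,\,a^{[f_1(n)]}x,\,\ldots,\,a^{[f_k(n)]}x\big)\d\mu(x),
\]
so that $\alpha(n)$ is the integral of $F$ over the translate $b_n\cdot\Delta X$ of the diagonal $\Delta X\subset\hat X$ by $b_n:=(1_G,a^{[f_1(n)]},\ldots,a^{[f_k(n)]})$. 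Setting $a_i:=(1_G,\ldots,1_G,a,1_G,\ldots,1_G)\in G^{k+1}$ (with $a$ in position $i+1$), one has $b_n=a_1^{[f_1(n)]}\cdots a_k^{[f_k(n)]}$, and the commuting elements $a_1,\ldots,a_k$ generate a closed abelian subgroup $A\subset G^{k+1}$. The plan is to show that the smallest closed rational subgroup $H\subset G^{k+1}$ containing $A$ together with the stabiliser $\Delta G$ of $\Delta X$ descends to a sub-nilmanifold $Y\subset \hat X$ on which the map
\[
y\,\longmapsto\,\int_{y\cdot\Delta X} F\d\mu_{y\cdot\Delta X}
\]
extends to a well-defined continuous function $\Phi\colon Y\to\C$. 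One would then set $y:=1_{\hat X}\in Y$ and verify that $\alpha(n)=\Phi\!\big(a_1^{[f_1(n)]}\cdots a_k^{[f_k(n)]}y\big)$, producing the nilsequence part.

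The hardest step will be the construction of the nilmanifold $Y$ and the function $\Phi$. The obstacle is that the naive function $(g_1,\ldots,g_k)\mapsto\int_X h_0(x)h_1(g_1 x)\cdots h_k(g_k x)\d\mu(x)$ on $G^k$ is only invariant under the diagonal subgroup $\Delta\Gamma$, and the quotient $G^k/\Delta\Gamma$ is generally non-compact, so it is not itself a nilmanifold. To address this, one would restrict attention to a closed rational subgroup $H\subset G^k$ containing the one-parameter subgroups $\{(1,\ldots,a^t,\ldots,1):t\in\R\}$ (which is possible because $a\in G^\circ$ after the reduction step), and then check that $H\cap\Delta\Gamma$ is cocompact in $H$, so that $H/(H\cap\Delta\Gamma)$ is indeed a nilmanifold. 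This compactness/cocompactness step is precisely where condition~\ref{property_P_W} and the Hardy-field equidistribution results (especially \cref{thm_E}, \cref{thm_D} and \cref{thm_lastuniformdistribution}) must enter, in order to guarantee that $b_n$ stays in the correct sub-nilmanifold asymptotically. Any remaining discrepancy between the orbit $b_n\Gamma^{k+1}$ and its projection into this sub-nilmanifold contributes to $\nu(n)$, and is controlled using the already-established $W$-averaged equidistribution of \cref{thm_lastuniformdistribution} combined with uniform continuity of $F$.
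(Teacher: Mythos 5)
This statement is not proved in the paper at all: it is posed as an open conjecture in \cref{sec_questions}, so there is no argument of the authors to compare yours against, and a complete proof would be a new result going beyond \cref{thm_main}. Judged on its own terms, your outline (the natural Bergelson--Host--Kra strategy: characteristic factors, then an exact nilsequence representation on nilsystems) is the reasonable route, but it has two genuine gaps, one in each stage.

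First, the reduction. \cref{prop_characteristic_factor_main} gives smallness of the weighted averages only when the function with vanishing seminorm is the one attached to the \emph{fastest-growing} $f_k$ (its hypotheses single out $f_k$), so projecting every $h_i$ onto ${\mathcal Z}_s$ needs a characteristic-factor statement in each position, which you do not supply. More seriously, after conditioning on ${\mathcal Z}_s$ you sit on a pro-nilsystem, and to reach an actual nilsystem with continuous functions you must approximate $\E[h_i\mid{\mathcal Z}_s]$ by $\E[h_i\mid\B_\ell]$ and then by continuous functions. Each such approximation changes $\alpha(n)$ by an error that is only \emph{uniformly small} in $W$-averaged absolute value (of size $\epsilon$), not null, and the nilmanifold you construct depends on $\ell$ and $\epsilon$. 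The conjecture demands a \emph{single} nilmanifold $Y$, a single $F$, and an error $\nu$ that is genuinely null in $W$-average; an $\epsilon$-approximation scheme yields only ``basic nilsequence up to $\epsilon$'' for every $\epsilon$, and there is no diagonalization merging infinitely many nilmanifolds into one (this is precisely why the classical Bergelson--Host--Kra theorem is stated in terms of uniform limits of basic nilsequences plus a null term). As written, your reduction cannot produce the conclusion in the form asserted.

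Second, the nilsystem case, which you yourself flag as the hardest step, is not actually carried out, and the ingredients you propose to invoke cannot do the job. The identity $\alpha(n)=\int F\,d\big((b_n)_*\mu_{\Delta X}\big)$ with $b_n=a_1^{[f_1(n)]}\cdots a_k^{[f_k(n)]}$ is exact on the group $G^{k+1}$, but for $\Phi(y)=\int_{y\Delta X}F\,d\mu_{y\Delta X}$ to descend to a continuous function on a compact sub-nilmanifold $Y$ you must show (i) well-definedness, i.e.\ that every element of the intersection of the relevant closed subgroup with $\Gamma^{k+1}$ preserves the diagonal measure $\mu_{\Delta X}$, and (ii) weak-$*$ continuity of the translated diagonal measures through the quotient; neither is automatic and neither is addressed. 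Moreover, attributing the needed cocompactness to Property \ref{property_P_W} and to \cref{thm_E}, \cref{thm_D} and \cref{thm_lastuniformdistribution} conflates two different issues: cocompactness of $H\cap\Delta\Gamma$ in $H$ is a purely algebraic rationality question about $H$ and $\Gamma$, independent of the Hardy-field functions, whereas those results provide only $W$-averaged equidistribution of the orbit $b_n\Gamma^{k+1}$ and give no pointwise information; in particular they cannot show that your ``remaining discrepancy'' $\alpha(n)-\Phi(b_ny)$ has $W$-averaged absolute value tending to zero, since equidistribution with respect to a limiting measure controls averages of continuous functions of the orbit, not averages of $|\alpha(n)-\Phi(b_ny)|$. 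What is really needed here is a several-parameter analogue of the exact nilsequence representation of multicorrelation sequences on nilsystems (resting on continuity of the fiber measures of the diagonal subaction), which is not contained in this paper and is not established by your sketch.
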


Our notion of compatibility between a tuple of functions $f_1,\dots,f_k\in\Hardy$ and a weight function $W$ hinges on the Property \ref{property_P_W}.
This relationship is necessary to prove Theorem~\ref{thm_main} because of the reliance on the results from \cite{Richter20arXiv}. \
However, it is possible that a weaker notion of compatibility is sufficient.
Given a Hardy field $\Hardy$ and $f_1,\dots,f_k,W\in\Hardy$ we define the property
\begin{enumerate}
[label=\textbf{Property (WP):},ref=(WP),leftmargin=*]
\item\label{property_weakP_W}

For all $f\in\sp(f_1,\ldots,f_k)$ and $p\in\R[t]$ either $|f(t)-p(t)|\ll 1$ or $|f(t)-p(t)| \succ \log(W(t))$.
\end{enumerate}

\begin{Conjecture}\label{conjecture_weakP}
 Theorem~\ref{thm_main} holds if Property \ref{property_P_W} gets replaced with Property \ref{property_weakP_W}.
\end{Conjecture}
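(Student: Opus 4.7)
The plan is to adapt the proof of \cref{thm_main} so that only Property \ref{property_weakP_W} is used in place of Property \ref{property_P_W}. There are two places where the stronger property is invoked in the current argument: the characteristic-factor statement \cref{prop_characteristic_factor_main} (via the PET-type van der Corput scheme of \cref{sec_charfac}), and the nilmanifold equidistribution statement \cref{thm_F} (which relies on \cref{thm_E} and \cref{thm_D} from \cite{Richter20arXiv}).

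First, I would examine the van der Corput induction in the proof of \cref{prop_characteristic_factor_main}. Each step replaces the current tuple of functions by shift-differences of the form $f_i(n+m)-f_1(n)$; a Hardy-field Taylor expansion $f_i(n+m)-f_i(n)=mf_i'(n)+O(m^2 f_i''(n))$ then makes it transparent why derivatives of the original $f_i$ enter the argument. Crucially, however, the characteristic vector strictly decreases at each step, so only finitely many derivatives ever appear. I would therefore work with the enlarged tuple consisting of $f_1,\dots,f_k$ together with all the derivatives produced by the induction, and then apply \cref{cor_finding_W} (i.e.\ \cite[Corollary A.5]{Richter20arXiv}) to build an auxiliary weight $W'\in\Hardy$ with $1\prec W'\ll W$ that is compatible in the sense of Property \ref{property_P_W} with this enlarged tuple. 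One would then verify that the characteristic-factor estimate obtained relative to $W'$ implies the corresponding estimate relative to $W$; the natural tool here is a telescoping comparison based on the monotonicity of $W$ and $W'$ combined with \cref{prop_Bosh_W-averages}.

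Second, I would revisit the proofs of \cref{thm_E} and \cref{thm_D} in \cite{Richter20arXiv} as they are invoked in \cref{sec_nilfac}. Their use enters only after projection to the maximal factor torus, where equidistribution is tested against characters and hence reduces, via Weyl's criterion, to the uniform distribution mod $1$ of scalar sequences of the form $\sum c_i f_i(n)$. Since this reduction only probes $\sp(f_1,\dots,f_k)$, it is natural to expect Property \ref{property_weakP_W} to suffice; the details must nonetheless be verified by re-examining the proofs of the quoted results from \cite{Richter20arXiv}.

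I expect the principal obstacle to lie in the transfer from the auxiliary weight $W'$ back to the original weight $W$ in the first stage. Without the derivative control afforded by Property \ref{property_P_W}, the standard tools for comparing $W$-averages and $W'$-averages may break down, and one may instead be forced to adopt a finer induction scheme that adapts the weight at each van der Corput step while still ultimately producing a conclusion phrased in terms of $W$-averages.
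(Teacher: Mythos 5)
You have not proved this statement, and you could not have by the route you describe: in the paper it is an open conjecture (indeed, the authors note it would imply \cref{conj_frantzi_2}, i.e.\ Frantzikinakis's Problem 23), so there is no proof to compare against, and your proposal is an outline whose pivotal step fails. The heart of the matter is exactly the point you flag at the end but do not resolve. In the van der Corput/PET induction the functions that arise are (up to bounded errors) of the form $f_i(n+m)-f_j(n)\approx m f_i'(n)+\dots$, so one genuinely needs equidistribution mod $1$, \emph{with respect to the prescribed weight $W$}, of elements of $\Hsp(f_1,\dots,f_k)$ minus polynomials. Property \ref{property_weakP_W} permits some such derivative combination $f$ to satisfy $1\prec|f(t)-p(t)|\prec\log(W(t))$ (think of $f_1(t)=t\log\log t$, whose derivative behaves like $\log\log t$: this is admissible under \ref{property_weakP_W} with $W(t)=t$, yet $\log\log n$ is not uniformly distributed mod $1$ in the \Cesaro{} sense). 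For such $f$ the required equidistribution with respect to $W$-averages simply fails, which is precisely why Property \ref{property_P_W} imposes the condition on all of $\Hsp(f_1,\dots,f_k)$ and why the conjecture is hard.

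Your proposed repair — choose, via \cref{cor_finding_W}, an auxiliary weight $W'$ with $1\prec W'\ll W$ compatible in the sense of \ref{property_P_W} with the enlarged (derivative-closed) family, prove the estimate for $W'$-averages, and then ``transfer'' back to $W$-averages — goes in the wrong direction. For eventually monotone Hardy-field weights, convergence of $W$-averages implies convergence of $W'$-averages when $W'\prec W$ (faster weights are stronger, as with \Cesaro{} implying logarithmic averaging), not conversely; a bounded sequence can have convergent $W'$-averages while its $W$-averages oscillate. No telescoping or monotonicity argument, nor \cref{prop_Bosh_W-averages} (which itself requires the function to dominate $\log W$), bridges this gap, and a compatible weight growing \emph{faster} than $W$ need not exist since enlarging $W$ strengthens Property \ref{property_P_W}. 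The same derivative-control issue reappears in your second stage: the equidistribution inputs \cref{thm_E} and \cref{thm_D} from \cite{Richter20arXiv} are themselves proved under Property \ref{property_P_W}-type hypotheses, and the reduction to the maximal factor torus does not probe only $\sp(f_1,\dots,f_k)$ once the inductive mechanism producing derivatives is taken into account. So the proposal, as it stands, contains a genuine gap rather than a proof, and closing it would require a new idea for handling slowly varying derivative combinations, not a reweighting of the existing argument.
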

We remark that \cref{conjecture_weakP} implies \cref{conj_frantzi_2}.

Condition \ref{itm_cond_1} in Theorem~\ref{thm_main_combinatorial} is somewhat complicated.
In a way this is inevitable if we want to allow families of jointly intersective polynomials.
However, there is a simpler, more natural, and slightly weaker condition which might be sufficient to imply the conclusion.
\begin{Question}
  Can one replace condition \ref{itm_cond_1} in Theorem~\ref{thm_main_combinatorial} with the weaker assumption that the collection of polynomials
$$
\poly(f_1,\ldots,f_k)\cap \Z[t]\,=\,\Big\{q\in\Z[t]: \exists f\in\sp(f_1,\dots,f_k)\, \text{with}\, \lim_{t\to\infty}|f(t)-q(t)|=0\Big\}
$$
is jointly intersective?
\end{Question}

One of the motivations for this paper was to expand on our previous work from \cite{BMR20} which revealed a new phenomenon pertaining to non-polynomial functions, say, from a Hardy field  (see \cref{thm_thick_szemeredi} above).
As a corollary of \cref{thm_thick_szemeredi}, if $f\in\Hardy$ and $t^{\ell-1}\prec f(t)\prec t^\ell$ for some $\ell\in\N$, then for every $E\subset\N$ with $\bar d(E)>0$ the set
\begin{equation}\label{eq_returntimes}
  R=R_f:=\Big\{n\in\N:E\cap\big(E-\big\lfloor f(n)\big\rfloor\big)\neq\emptyset\Big\}
\end{equation}
is \emph{thick}, i.e. contains arbitrarily long intervals.
This stands in contrast with the case when $f$ is a polynomial, in which case $R(A,f)$ is in general not thick, but is always \emph{syndetic}, i.e. it has bounded gaps.
This difference is all the more striking since syndeticity and thickness are complementary notions (a set is syndetic if and only if its complement is not thick).
Regarding \eqref{eq_returntimes} we have a dichotomy:

$$R_f\text{ is }\begin{cases}
  \text{ thick }&\text{ if }t^{\ell-1}\prec f(t)\prec t^\ell\text{ for some }\ell\in\N\\
  \text{ syndetic }&\text{ otherwise}.
\end{cases}$$

Corollary A4 in the Introduction implies that if $f_1,\dots,f_k$ belong to a Hardy field and satisfy a ``non-polynomiality condition'' then the intersection $R_{f_1}\cap \cdots\cap R_{f_k}$ is thick.
While we can not replace the condition with the more natural $\poly(f_1,\dots,f_k)=\emptyset$, as shown in \cref{example_4}, there is an intermediate condition which might be sufficient.
\begin{Question}
  Does Corollary A4 still hold if $\Hsp(f_1,\dots,f_k)$ is replaced with the set
  $$\big\{c_1 f_1^{(m_1)}(t)+\ldots+c_k f_k^{(m_k)}(t): c_1,\ldots,c_k\in\Z,\, m_1,\ldots,m_k\in\N\cup\{0\}\big\}?$$
\end{Question}
In particular, we don't know if letting $f_1(t)=t^{3/2}$ and $f_2(t)=\alpha t^{3/2}+t$, where $\alpha\in\R\setminus\Q$, the intersection $R_{f_1}\cap R_{f_2}$ is thick.


\bibliographystyle{aomalphanomr}
\bibliography{mynewlibrary.bib}



\bigskip
\footnotesize
\noindent
Vitaly Bergelson\\
\textsc{The Ohio State University}\par\nopagebreak
\noindent
\href{mailto:bergelson.1@osu.edu}
{\texttt{bergelson.1@osu.edu}}

\bigskip
\footnotesize
\noindent
Joel Moreira\\
\textsc{University of Warwick}\par\nopagebreak
\noindent
\href{mailto:joel.moreira@warwick.ac.uk}
{\texttt{joel.moreira@warwick.ac.uk}}

\bigskip
\footnotesize
\noindent
Florian K.\ Richter\\
\textsc{École Polytechnique Fédérale de Lausanne (EPFL)}\par\nopagebreak
\noindent
\href{mailto:f.richter@epfl.ch}
{\texttt{f.richter@epfl.ch}}

\end{document}